\newcommand*{\DashedArrow}[1][]{\mathbin{\tikz [baseline=-0.25ex,-latex, dashed,#1] \draw [#1] (0pt,
0.5ex) -- (1.3em,0.5ex);}}
\newtheorem{theorem}{Theorem}[section]
\newtheorem{Lemma}[theorem]{Lemma}
\theoremstyle{definition}
\newtheorem{definition}[theorem]{Definition}
\newtheorem{prob}{Problem}
\newtheorem{claim}[theorem]{Claim}
\theoremstyle{proposition}
\newtheorem{proposition}[theorem]{Proposition}
\theoremstyle{remark}
\newtheorem{remark}[theorem]{\textbf{Remark}}
\numberwithin{equation}{section}
\theoremstyle{main}
\newtheorem{main}{Theorem}
\def\C{\mathbb C}
\begin{document}


\title[Branched pull-back components]{Branched pull-back components of the space of codimension $2$ foliations on $\mathbb P^4$}


\subjclass{37F75 (primary); 32G34, 32S65 (secondary)} 

\author[W. Costa e Silva]{W. Costa e Silva}
\address{IMPA, Est. D. Castorina, 110, 22460-320, Rio de Janeiro, RJ, Brazil}
\email{wancossil@gmail.com}

%
%
\date{}



\begin{abstract} We present a new list of irreducible components of the space of codimension two holomorphic foliations on  $\mathbb P^{4}$. They are associated to the pull-back by branched rational maps of 1-dimensional foliations on $\mathbb P^3$ leaving $2$-dimensional planes invariant.
\end{abstract}
\keywords{Lotka-Volterra, Pull-back foliation, Kupka singularity, Irreducible components}

\maketitle

%
\setcounter{tocdepth}{1}
\tableofcontents \sloppy

\section{Introduction}

It is a well known fact that on $\mathbb P^{n}$, the $n$-dimensional complex projective space, a codimension $q$ singular holomorphic foliation $\mathcal F$ is given by an element of $H^{0}(\mathbb P^{n},\Omega^{q}_{\mathbb P^{n}}\otimes \mathcal O_{\mathbb P^n}(\Theta
+q+1))$, where $\Theta
$ (called the degree of $\mathcal{F}$)  is  the degree of the tangency divisor between the foliation and a generic $\mathbb{P}^{q}$ linearly embedded in $\mathbb P^n$.  
Furthermore, an element of  $H^{0}(\mathbb P^{n},\Omega^{q}_{\mathbb P^{n}}\otimes \mathcal O_{\mathbb P^n}(\Theta
+q+1))$ can be represented, in homogeneous coordinates, by a polynomial  $q$-form $\eta$ on $\mathbb C^{n+1}$ with homogeneous coefficients of degree $\Theta+1$ such that $i_{R}\eta=0$, where $R=\sum_{j=0}^{n}x_j\frac{\partial}{\partial x_j}$ is the  radial vector field on $\mathbb C^{n+1}$.

The projectivisation of the set of homogeneous polynomial integrable $q$-forms $\eta$  as above, which have singular set of codimension greater than or equal to two defining in homogeneous coordinates codimension $q$ foliations on $\mathbb P^n$ will be denoted by $\mathbb{F}{\rm{ol}}\left(\Theta;n-q,n\right)$. Due to the integrability condition the set $\mathbb{F}{\rm{ol}}\left(\Theta;n-q,n\right)$ is a quasi-projective algebraic subset of $\mathbb PH^{0}(\mathbb P^{n},\Omega^{q}_{\mathbb P^{n}}\otimes \mathcal O_{\mathbb P^n}(\Theta+q+1))$.  A natural problem that arises is:
\vskip0.2cm
\begin{prob}\label{pr:0}
{\rm Describe the irreducible components of $\mathbb{F}{\rm{ol}}\left(\Theta;n-q,n\right)$  on ${\mathbb P^n}$, where $\Theta
\geq 0$, $n\geq3$ and $1\leq q\leq n-1$. }
\end{prob}

\vskip0.2cm What little that is known thus far about the irreducible components proceeds first by describing what they are. The classification of the irreducible components of $\mathbb{F}{\rm{ol}}\left(0;n-q,n\right)$ for all $1\leq q\leq n-1$ was given in \cite[Th. 3.8 p. 46]{cede} (a codimension $q$  foliation of degree zero on $\mathbb P^{n}$ is defined by a linear projection from $\mathbb P^n$ to $\mathbb P^{q}$). The classification of the irreducible components of $\mathbb{F}{\rm{ol}}\left(1;n-q,n\right)$ for all $1\leq q\leq n-1$, which require too many details to be explained here, was  given in \cite[Th. 6.2 and Cor. 6.3 p. 935-936]{lpt}. 

The space of codimension one foliations of degree $\Theta$ on $\mathbb P^n$ is denoted by $\mathbb{F}{\rm{ol}}\left(\Theta,n-1,n\right)$ and the study of these spaces was begun by \cite{jou}, where  the irreducible components of $\mathbb{F}{\rm{ol}}\left(\Theta,n-1,n\right)$ for $\Theta=0$ and $\Theta=1$ are described. Up to date there are partial lists of irreducible components of $\mathbb{F}{\rm{ol}}\left(\Theta,n-1,n\right)$, for instance, (\cite{omegar},\cite{ccgl},\cite{clne},\cite{costa},\cite{cupe}) which is known to be complete only when $\Theta\leq2$.

 In the  paper \cite{cln}, the authors proved that $\mathbb{F}{\rm{ol}}\left(2,n-1,n\right)$ has six irreducible components, which can be described by geometric and dynamic properties of a generic element. 

For the case $q\geq2$, the description of the irreducible components of $\mathbb{F}{\rm{ol}}\left(\Theta;n-q,n\right)$ we have a few known results: rational components \cite{cupeva}, foliations associated to affine Lie algebras, foliations induced by group actions, linear pull-backs (\cite{constant},\cite{cupe}) and generic pull-backs \cite{costaln}.

When we study the components of the space $\mathbb{F}{\rm{ol}}\left(\Theta;n-q,n\right), n\geq 3$ we discover that there are families of irreducible components in which the typical element is a pull-back of a foliation of degree $d$ on ${\mathbb P^{q+1}}$ by a nonlinear rational map. More precisely, the situation is as follows: Given a generic rational map $f: {\mathbb P^n}  \DashedArrow[->,densely dashed    ]   {\mathbb P^{q+1}}$, $1\leq q\leq n-2$ and $n\geq 3$, of degree $\nu\geq2$ and a $1$-dimensional foliation $\mathcal G$ of degree $d$ on $\mathbb P^{q+1}$, then it can be associated to the pair $(f,\mathcal G)$ the pull-back foliation $\mathcal F=f^{\ast}\mathcal G$. If $f$ and $\mathcal G$ are generic then the degree of the foliation $\mathcal F$ is $\Theta(\nu,d,q)=(d+q+1)\nu-q-1$.
Let $PB(\nu,d,q,n)$, be the closure in $\mathbb{F}{\rm{ol}}\left(\Theta;n-q,n\right)$, $\Theta=\Theta(\nu,d,q)$, of the set of this kind of foliations. The main result contained in \cite{costaln} is:

 \begin{theorem}\label{teo1.1}  \cite{costaln} The set $PB(\nu,d,q,n)$ is a unirational irreducible component of\break $\mathbb{F}{\rm{ol}}\left(\Theta;n-q,n\right)$ for all $\nu\geq2$, $d\geq2$, $1\leq q\leq n-2$ and $n\geq 3$.
\end{theorem}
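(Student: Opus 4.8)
I would realise $PB(\nu,d,q,n)$ as the closure of the image of an explicit unirational family, and then prove that a generic element $\mathcal F_{0}$ admits no deformation leaving that family, by exploiting the stability of its Kupka set. First I set up the parametrization: a degree $\nu$ rational map $f\colon\P^{n}\DashedArrow[->,densely dashed]\P^{q+1}$ is a $(q+2)$-tuple $(f^{(0)},\dots,f^{(q+1)})$ of degree $\nu$ homogeneous forms on $\C^{n+1}$, i.e.\ a point of a dense open subset of $\P\bigl(H^{0}(\P^{n},\mathcal O(\nu))^{\oplus(q+2)}\bigr)$, while a $1$-dimensional foliation $\mathcal G$ of degree $d$ on $\P^{q+1}$ is a point of a dense open subset of the projective space $\P H^{0}(\P^{q+1},T\P^{q+1}\otimes\mathcal O(d-1))$ — no integrability condition being required downstairs since $\mathcal G$ is of rank one. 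Both factors are rational, so their product $U$ is; and for a generic $(f,\mathcal G)\in U$, if $\eta$ is a homogeneous $q$-form representing $\mathcal G$, then $f^{\ast}\eta$, after removing its codimension one zero divisor, represents a codimension $q$ foliation on $\P^{n}$ of degree $\Theta(\nu,d,q)$ with singular set of codimension $\geq 2$. This yields a rational map $\Phi\colon U\DashedArrow[->,densely dashed]\mathbb{F}{\rm{ol}}(\Theta;n-q,n)$ with image closure $PB(\nu,d,q,n)$, which is thus irreducible and unirational; it remains to show it is a full component.

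\textbf{Kupka geometry of a generic element.} Next I analyse $\sing(\mathcal F_{0})$ for a generic $\mathcal F_{0}=f_{0}^{\ast}\mathcal G_{0}$. Let $B\subset\P^{n}$ be the indeterminacy locus of $f_{0}$, generically smooth of codimension $q+2$. For generic $\mathcal G_{0}$ the set $\sing(\mathcal G_{0})\subset\P^{q+1}$ consists of $\sum_{i=0}^{q+1}d^{\,i}\geq 3$ nondegenerate points. For each $p=[c_{0}:\dots:c_{q+1}]$ in it, the subvariety $K_{p}:=\overline{f_{0}^{-1}(p)}$ is, for generic $f_{0}$, the zero locus of the regular sequence $c_{j}f_{0}^{(i)}-c_{i}f_{0}^{(j)}$ ($i$ fixed, $j\neq i$), hence a smooth irreducible complete intersection of $q+1$ hypersurfaces of degree $\nu$; it contains $B$, and along $K_{p}\setminus B$ the foliation $\mathcal F_{0}$ has a Kupka singularity whose transverse type is the nondegenerate germ of $\mathcal G_{0}$ at $p$ — locally $\mathcal F_{0}=\pi^{\ast}(\mathcal G_{0},p)$ with $\pi$ a local model of $f_{0}$. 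I also note that $B=K_{p}\cap K_{p'}$ for distinct $p,p'\in\sing(\mathcal G_{0})$, so $f_{0}$ is recovered, up to $\Aut(\P^{q+1})$, as the linear system $|\mathcal I_{B}(\nu)|=\langle f_{0}^{(0)},\dots,f_{0}^{(q+1)}\rangle$, and I check that the remaining lower-dimensional strata of $\sing(\mathcal F_{0})$ carry no structure that could force $\mathcal F_{0}$ onto another family.

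\textbf{Stability under deformation, and the conclusion.} Given a germ of deformation $(\mathcal F_{t})$ of $\mathcal F_{0}$ in $\mathbb{F}{\rm{ol}}(\Theta;n-q,n)$, I invoke the stability theorems for Kupka components (G\'omez-Mont and Lins Neto, de Medeiros, Calvo-Andrade): each $K_{p}$ deforms to a Kupka component $K_{p,t}$ of $\mathcal F_{t}$, biholomorphic to $K_{p}$ and with transverse type a small deformation of $(\mathcal G_{0},p)$; and since $K_{p}$ is a smooth complete intersection — hence unobstructed, with smooth Hilbert scheme of the expected dimension — $K_{p,t}$ is again a complete intersection of $q+1$ hypersurfaces of degree $\nu$. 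Intersecting two of these produces a deformed locus $B_{t}$, and $|\mathcal I_{B_{t}}(\nu)|$ reconstructs a rational map $f_{t}\colon\P^{n}\DashedArrow[->,densely dashed]\P^{q+1}$ of degree $\nu$ having all the $K_{p,t}$ among its fibres. Since $\mathcal F_{t}$ is, near each $K_{p,t}$, the pull-back of a germ by a local model of $f_{t}$, I propagate this by an analytic-continuation argument (using the irreducibility of $\P^{n}$ and of the generic fibre of $f_{t}$) to the statement that the codimension $q+1$ foliation by the fibres of $f_{t}$ is tangent to $\mathcal F_{t}$ everywhere; hence $\mathcal F_{t}=f_{t}^{\ast}\mathcal G_{t}$ for a unique $1$-dimensional foliation $\mathcal G_{t}$ of degree $d$ on $\P^{q+1}$, the uniqueness because $f_{t}$ is dominant. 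Thus $\mathcal F_{t}\in PB(\nu,d,q,n)$; as this holds for every deformation, a generic element of $PB(\nu,d,q,n)$ admits no deformation leaving it, and $PB(\nu,d,q,n)$ — being closed and irreducible — is an irreducible component of $\mathbb{F}{\rm{ol}}(\Theta;n-q,n)$.

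\textbf{Where the difficulty lies.} The hard part, I expect, is this last step: for a \emph{generic} starting point one must rule out that a deformation destroys the net of degree $\nu$ hypersurfaces carried by the Kupka components (so that a degree $\nu$ map $f_{t}$ is genuinely reconstructible), and one must then propagate $f_{t}$-basicness of $\mathcal F_{t}$ from the finitely many Kupka strata to the generic fibre — a point where $q\geq1$ is markedly harder than the classical codimension one pull-back, since for $q\geq 2$ there are no invariant hypersurfaces to fall back on. This rests on a careful genericity analysis of $(f_{0},\mathcal G_{0})$ ensuring that $\sing(\mathcal F_{0})$ has exactly the expected strata and that the hypotheses of Kupka stability are met, combined with the complete-intersection structure of the $K_{p}$. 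A purely cohomological alternative — computing $\dim_{\C}T_{\mathcal F_{0}}\mathbb{F}{\rm{ol}}(\Theta;n-q,n)$ from the linearised Pl\"ucker and integrability equations and matching it with $\dim PB(\nu,d,q,n)$ — is conceivable, but the Kupka route is the robust one in this range.
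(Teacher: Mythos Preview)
This theorem is not proved in the present paper: it is quoted from \cite{costaln} as motivation, and the paper's own contribution is the branched analogue (Theorem~A). So there is no proof here to compare against directly. That said, the paper's proof of Theorem~A follows the template of \cite{costaln} closely enough that one can see where your sketch matches and where it falls short.

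Your overall architecture --- unirational parametrisation, Kupka geometry of $\mathcal F_0$, stability of the Kupka complete intersections under deformation, reconstruction of $f_t$ from two deformed Kupka loci via $|\mathcal I_{B_t}(\nu)|$ --- is the right one and agrees with what the paper (and \cite{costaln}) does. The genuine gap is in your last step. You write that you ``propagate by an analytic-continuation argument (using the irreducibility of $\P^n$ and of the generic fibre of $f_t$)'' the tangency of the fibres of $f_t$ to $\mathcal F_t$. Irreducibility alone does not do this: the actual mechanism, visible in \S\ref{ss:33} of this paper and in \cite{costaln}, has two ingredients you omit. First, $\mathcal G_0$ must be chosen in the generic set of one-dimensional foliations on $\P^{q+1}$ having \emph{no algebraic invariant set of positive dimension} (the results of \cite{cope,lnsoares} in the unbranched case, Theorem~B here in the branched one); without this you cannot conclude anything global from local coincidence. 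Second, $\mathcal G_t$ is not obtained ``for free'' once $f_t$-basicness is known; it is recovered \emph{first}, by blowing up $\P^n$ at the indeterminacy points and reading the induced foliation on each exceptional divisor $E_j\simeq\P^{q+1}$. One then forms $\widetilde{\mathcal F}_t=f_t^\ast\mathcal G_t$ and proves $\mathcal F_t=\widetilde{\mathcal F}_t$ by showing they share a separatrix strip along one deformed Kupka curve $V_\tau'(t)$; since that strip meets $E_1(t)$ in a separatrix of $\mathcal G_t$, and $\mathcal G_t$ has no proper algebraic invariant sets, the Zariski closure of the common leaf is all of $\P^n$. Your ``analytic continuation'' glosses over exactly this, and in particular never invokes the no-algebraic-leaf hypothesis on $\mathcal G_0$ that makes the argument close.

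A smaller point: reconstructing $f_t$ from two Kupka components gives a candidate map, but one must still verify that the \emph{remaining} deformed Kupka curves $V_{\tau^\ast}(t)$ are fibres of this $f_t$ (Lemmas~\ref{casomenor}--\ref{casomaior} here; the unbranched case is easier but not automatic). Your sketch assumes this without comment.
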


This result is a generalization of the work \cite{clne} where the same problem has been considered in the case of codimension one foliations.   In \cite{costaln} is posed the following problem:

\begin{prob}\label{pr:x}
{\rm Is there a generalization of Theorem \ref{teo1.1}  in the case of pull-backs by branched maps (not generic) like in \cite{costa}?} 
\end{prob}

The maim aim of this work is to give a positive answer to problem \ref{pr:x} in the situation $n=4$ and $q=2$. The restriction to this situation is due to technical difficulties which appear along the proof.  Before we state our main result let us describe, briefly, the type of pull-back foliation that we shall consider. Let us fix some coordinates $z=(z_0,...,z_{4})$ on $\mathbb C^{5}$ and $x=(x_0,...,x_{3})$ on $\mathbb C^{4}$ and denote by $f: {\mathbb P^4}  \DashedArrow[->,densely dashed    ]   {\mathbb P^{3}}$ a rational map represented in the coordinates $z\in \mathbb{C}^{5}$ and $x \in \mathbb C^{4}$ by $\tilde f=(F_{0}^\alpha,F_{1}^\beta,F_{2}^\gamma,F_{3}^\delta)$ where $F_{i} \in \mathbb C[z]$ are irreducible homogeneous polynomials without common factors satisfying the relation $\alpha.\deg(F_0)=\beta.\deg(F_1)=\gamma.\deg(F_2)=\delta.\deg(F_3)=\nu\geq2$. In order for our techniques work in several steps of the proof we need to put arithmetical hypotheses on $\alpha$, $\beta$, $\gamma$ and $\delta$. They are divided into: 

\begin{itemize}
\item case (1): $1<\alpha<\beta<\gamma<\delta$ and $\alpha$, $\beta$, $\gamma$ and $\delta$ are pairwise relatively prime;

\item case (2): $1=\alpha<\beta<\gamma<\delta$ and $\beta$, $\gamma$ and $\delta$ are pairwise relatively prime;

\item case (3): $1=\alpha=\beta<\gamma<\delta$ and $\gamma$ and $\delta$ are relatively prime;

\item case (4): $1=\alpha=\beta=\gamma<\delta$. 
\end{itemize}

If $\mathcal G$ is a $1$-dimensional foliation of degree $d$ on $\mathbb P^3$ that leaves invariant the four coordinate planes $(x_0x_1x_2x_3=0)$, then $\mathcal G$ is given by a homogeneous polynomial $2$-form as below: 
\begin{multline*}
\Omega=x_{2}x_{3}P_{2,3}dx_{0}\wedge dx_{1}-x_{1}x_{3}P_{1,3}dx_{0}\wedge dx_{2}+x_{1}x_{2}P_{1,2}dx_{0}\wedge dx_{3}\\ +x_{0}x_{3}P_{0,3}dx_{1}\wedge  dx_{2}-x_{0}x_{2}P_{0,2}dx_{1}\wedge dx_{3}+x_{0}x_{1}P_{0,1}dx_{2}\wedge dx_{3}
\end{multline*}
where the $P_{i,j}$ are homogeneous polynomials in $\mathbb C[x_0,x_1,x_2,x_3]$ of degree $d-1$. A 1-dimensional singular holomorphic foliation of this type will be called a {\it{generalized Lotka-Volterra foliation}}. 
Let us denote by $\mathcal L\mathcal V(d,3)$ the space of this kind of foliations. Note that the intersections $(x_i=x_j=0)$ with $i\neq j$ are also invariant by any foliation $\mathcal G$ in $\mathcal L\mathcal V(d,3)$.
For a generic choice of $f$ and $\mathcal G$, the pull-back foliation $\mathcal F=f^{\ast}\mathcal G$ associated to the pair $(f,\mathcal G)$ has degree $\Theta_{\nu,d}^{\alpha,\beta,\gamma,\delta}=\nu\left[\left(d-1\right)+\frac{1}{\alpha}+\frac{1}{\beta}+\frac{1}{\gamma}+\frac{1}{\delta}\right] - 3$. 

Let us denote by $PB(\Theta_{\nu,d}^{\alpha,\beta,\gamma,\delta},2,4)$  the closure in $\mathbb{F}{\rm{ol}}\left(\Theta_{\nu,d}^{\alpha,\beta,\gamma,\delta},2,4\right)$ of the set of foliations $\mathcal F$ of the form $f^{\ast}\mathcal G$. 
If $\alpha, \beta$, $\gamma$ and $\delta$ are in the cases $(1)$, $(2)$, $(3)$  and $(4)$ respectively we are able to establish the following: 

\begin{main}\label{teob}$ PB(\Theta_{\nu,d}^{\alpha,\beta,\gamma,\delta},2,4)$ is a unirational irreducible component of  $\mathbb{F}{\rm{ol}}\left(\Theta_{\nu,d}^{\alpha,\beta,\gamma,\delta},2,4\right)$ for all $d\geq2$ and $\nu\geq2$. \end{main}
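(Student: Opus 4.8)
The plan is to follow the now-standard "stability + rigidity" strategy for pull-back components, adapted to the branched setting as in \cite{costa}, \cite{costaln}. There are two things to establish: (i) $PB(\Theta_{\nu,d}^{\alpha,\beta,\gamma,\delta},2,4)$ is an irreducible subvariety of $\mathbb{F}{\rm{ol}}$ of the expected dimension, and (ii) it is \emph{maximal}, i.e. a generic deformation of a generic element $\mathcal F_0=f^\ast\mathcal G_0$ inside $\mathbb{F}{\rm{ol}}$ is again a branched pull-back of the same type. Step (i) is the easier half: the set of pairs $(f,\mathcal G)$ with $f$ of the prescribed multidegree type and $\mathcal G\in\mathcal L\mathcal V(d,3)$ is parametrized by an irreducible (in fact rational) variety, the pull-back construction $(f,\mathcal G)\mapsto f^\ast\mathcal G$ is an algebraic morphism into $\mathbb{F}{\rm{ol}}\left(\Theta_{\nu,d}^{\alpha,\beta,\gamma,\delta},2,4\right)$, and its image is therefore irreducible and unirational; taking closure gives $PB$. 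Unirationality of $PB$ follows immediately.

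For step (ii), the core of the argument, I would first pin down the geometry that is intrinsic to a generic $\mathcal F_0=f^\ast\mathcal G_0$ and survives small deformations. The essential invariant is the \emph{Kupka set}: since $\mathcal G_0$ leaves the four coordinate planes invariant, $f^{-1}(x_i=0)$ is an $\mathcal F_0$-invariant hypersurface, and a generic such element of $\mathcal L\mathcal V(d,3)$ produces transverse Kupka singularities along (the codimension-two components of) the pull-back of the coordinate axes $(x_i=x_j=0)$. Here the arithmetic hypotheses in cases (1)--(4) enter decisively: the coprimality of $\alpha,\beta,\gamma,\delta$ guarantees that the various components $F_i^{\alpha_i}$ enter the defining form of $\mathcal F_0$ with \emph{distinct} multiplicities, so that any holomorphic deformation $\mathcal F_t$ must preserve each invariant hypersurface $(F_i=0)$ individually (they cannot be permuted or merged), and must preserve the multiplicity structure along them. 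This is exactly the role the prime-power hypotheses play in \cite{costa}. One then invokes the stability of Kupka components (Kupka phenomenon is open, and a Kupka component of the singular set deforms as a smooth surface carrying a transverse type) to conclude that $\mathcal F_t$ retains, for small $t$, a configuration of invariant hypersurfaces and Kupka curves of the same numerical type as $\mathcal F_0$.

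The reconstruction step then runs as follows. From the invariant hypersurface $D_t=\sum \frac{1}{\alpha_i}(F_i^{(t)}=0)$ — with the $F_i^{(t)}$ deforming the original $F_i$ and still of the prescribed degrees because of the rigidity of the multiplicities — one recovers the map $f_t=(F_0^{(t)\alpha},\dots,F_3^{(t)\delta})$ up to the $\Aut(\mathbb P^3)$-action that fixes the four coordinate planes (a torus), using that four hypersurfaces of the right degrees with these intersection multiplicities along a web of curves essentially determine such a branched map; one must check $f_t$ is still dominant and generically of the right topological degree $\nu$, which follows from semicontinuity plus the genericity of $\mathcal F_0$. Having $f_t$, one shows $\mathcal F_t$ is its pull-back of some 1-dimensional foliation $\mathcal G_t$ on $\mathbb P^3$: the foliation $\mathcal F_t$ is tangent to the fibers of $f_t$ (the fibers are the pull-backs of points, and the vanishing of $i_{R}$-type contractions forces tangency once the invariant divisor structure is matched), so it descends. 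Finally one checks $\mathcal G_t$ leaves the four coordinate planes invariant — forced because $f_t^{-1}$ of each coordinate plane is $\mathcal F_t$-invariant — hence $\mathcal G_t\in\mathcal L\mathcal V(d,3)$ and $\mathcal F_t\in PB$, proving $PB$ is open, thus a component.

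The main obstacle I expect is the descent/reconstruction step: showing that a deformation $\mathcal F_t$ which has merely been shown to share the \emph{Kupka and invariant-divisor data} of $f_0^\ast\mathcal G_0$ is genuinely a pull-back $f_t^\ast\mathcal G_t$. This is where one cannot get away with soft arguments: one must produce the rational map $f_t$ explicitly from the geometry (the defining form of $\mathcal F_t$ must factor through $f_t^\ast$ of a form on $\mathbb P^3$), control that the factored form $\Omega_t$ has the Lotka--Volterra shape, and handle the degenerations in cases (2), (3), (4) where some of the $\alpha_i$ equal $1$ so the corresponding hypersurfaces are no longer "heavy" and the rigidity of their multiplicity is lost — there one must use the remaining coprime exponents plus the distinguished role of the $\alpha_i=1$ components (they are precisely the non-ramification divisors of $f$) to still rebuild $f_t$. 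The technical heart is therefore a careful local analysis along the Kupka curves combined with a global argument (à la \cite{clne}, \cite{costa}) that the "syzygy" expressing $\eta_t = f_t^\ast\Omega_t$ can be solved; I would organize this as a sequence of lemmas, one for each of the four arithmetic cases, with case (1) as the model and the others as increasingly degenerate variants.
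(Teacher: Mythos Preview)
Your high-level strategy (irreducibility via the parametrization, then stability of a generic element under deformation) matches the paper's, but the singular structure you propose to track is not the one that makes the argument work, and your reconstruction mechanism for $f_t$ and $\mathcal G_t$ does not go through as stated.

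The relevant singular set of a generic $\mathcal F_0 = f_0^*\mathcal G_0$ is not organized around the invariant hypersurfaces $(F_i=0)$ or the pull-backs of the coordinate axes $(x_i=x_j=0)$. A generic point of $(F_i=F_j=0)$ is not singular for $\mathcal F_0$ at all: along $F_0=F_1=0$ the term $\alpha\beta F_2F_3 (P_{23}\circ\tilde f)\, dF_0\wedge dF_1$ in the explicit $2$-form survives. What the paper actually tracks is
\begin{itemize}
\item[(a)] the isolated \emph{quasi-homogeneous} singularities at the indeterminacy points $p_j\in I(f_0)=\mathcal C(\mathcal F_0)$, whose stability under deformation is a nontrivial input from \cite{ln3}; and
\item[(b)] the Kupka \emph{curves} $V_{\tau_i}=\overline{f_0^{-1}(\tau_i)}$ lying over the singular points $\tau_i\in\operatorname{Sing}(\mathcal G_0)$ (not over coordinate axes), which all pass through the points of $\mathcal C(\mathcal F_0)$.
\end{itemize}

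The reconstruction of $f_t$ comes from the fact that the deformed Kupka curves $V_{\tau_i}(t)$ remain complete intersections of the same multidegree (Sernesi's criterion); from two particular fibers $V_a(t),V_b(t)$ one extracts polynomials $P_i(t)$ via Noether's theorem, and then two genuinely delicate lemmas (analyzing orbits of the associated $\mathbb C^*$-action and using a degree count forced by $I(f_t)=\mathcal C(\mathcal F_t)$) show that \emph{every} deformed $V_{\tau_i}(t)$ is a fiber of the assembled map $f_t$. Your proposal to read $f_t$ off a deformed divisor $D_t=\sum\frac{1}{\alpha_i}(F_i^{(t)}=0)$ cannot work in cases (3) and (4): when $\alpha=\beta=1$ (or $\alpha=\beta=\gamma=1$) there is no multiplicity rigidity on those hypersurfaces, and there is no stability theorem for invariant hypersurfaces of a codimension-two foliation that would let you single them out under deformation. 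The paper's route via quasi-homogeneous points plus Kupka curves handles all four cases uniformly.

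The reconstruction of $\mathcal G_t$ is also different from what you sketch. Near each $p_j(t)$ the deformed foliation is again quasi-homogeneous of the same type, hence defines a $1$-dimensional foliation on the weighted projective space $\mathbb P^3_w$ with $w=(\beta\gamma\delta,\alpha\gamma\delta,\alpha\beta\delta,\alpha\beta\gamma)$. The coprimality hypotheses are used here, not to separate multiplicities as you suggest, but to guarantee $\mathbb P^3_w\simeq\mathbb P^3$ and, via a holonomy computation, that the induced foliation still leaves the appropriate coordinate planes invariant. Finally, the equality $\mathcal F_t=f_t^*\mathcal G_t$ is not obtained by a direct factorization of the defining form. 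The paper performs a weighted blow-up at the points of $\mathcal C(\mathcal F_t)$, compares the strict transforms of $\mathcal F_t$ and $\widetilde{\mathcal F}_t:=f_t^*\mathcal G_t$, and shows via a ``strip'' argument that the two foliations share a separatrix along a Kupka curve whose Zariski closure is all of $\mathbb P^4$. This last step is precisely where Theorem~\ref{teoB} is indispensable (a generic $\mathcal G_t\in\mathcal M(d,3)$ has no algebraic invariant set beyond the coordinate planes and their intersections), and it does not appear anywhere in your outline.
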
 
Another result we would like to establish is related to {\it{generalized Lotka-Volterra foliations}} with generic properties. The first result in this direction was proved by Jouanolou in \cite{jou}, there he was able to prove that for all $d\geq 2$ there exists a generic subset of foliations on $\mathbb P^2$ having no algebraic invariant curves. After that, in \cite{ln} is proved that this set is open as well.  Regarding foliations leaving invariant lines in \cite{costa} and \cite{lnssc} were established similar results for foliations on $\mathbb P^2$ leaving invariant one and three lines respectively. These results were fundamental to construct the branched pull-back components for the space of codimension one foliations as described in \cite{costa}. In higher dimension, regarding generic properties of foliations with algebraic invariant sets of positive dimensions were known so far. In this direction, motivated by the works of \cite{costa} and \cite{lnssc} we will prove the following result that will be useful to establish Theorem \ref{teob}:  

\begin{main} \label{teoB}  Let $d\geq 2$. There exists a dense subset $\mathcal M\left(d,3\right)\subset \mathcal L\mathcal V(d,3)$, such that if $\mathcal G \in \mathcal M\left(d,3\right)$  then the only invariant algebraic sets of $\mathcal G$ are the 2-planes $(x_0x_1x_2x_3=0)$ and the intersections $(x_i=x_j=0)$ for $i\neq j$.
 \end{main}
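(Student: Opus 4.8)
The plan is to prove Theorem~\ref{teoB} by an explicit construction combined with a countability (Baire-category type) argument, following the strategy of Jouanolou and of \cite{costa}, \cite{lnssc}, but carried out in the space $\mathcal L\mathcal V(d,3)$ of generalized Lotka--Volterra foliations on $\mathbb P^3$. The first observation is that for a foliation $\mathcal G \in \mathcal L\mathcal V(d,3)$, any invariant algebraic hypersurface $V=(G=0)$ satisfies the usual equation: there is a polynomial $2$-form (or rather, a polynomial $\Theta_{\mathcal G}$) with $\Omega \wedge dG = G\cdot\Theta$; fixing the degree $k=\deg G$ makes the condition ``$V$ is invariant of degree $k$'' a closed algebraic condition on the pair $(\mathcal G, [G])\in \mathcal L\mathcal V(d,3)\times \mathbb P H^0(\mathbb P^3,\mathcal O(k))$. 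Projecting to the first factor, the locus $\Sigma_k\subset \mathcal L\mathcal V(d,3)$ of foliations admitting \emph{some} invariant hypersurface of degree $\le k$ other than the coordinate planes is a constructible (indeed closed) subset. So it suffices to show that for every $k$ this locus $\Sigma_k$ is a \emph{proper} subvariety; then $\mathcal M(d,3)=\mathcal L\mathcal V(d,3)\setminus\bigcup_{k\ge 1}\Sigma_k$ is the complement of a countable union of proper subvarieties, hence dense. (One must also rule out lower-dimensional invariant algebraic sets, but an invariant curve or point forces, by taking the Zariski closure of the union of leaves through it or using the classical argument, an invariant hypersurface or is contained in one of the coordinate planes; the invariant subsets of the coordinate planes $\mathbb P^2$ themselves are then controlled by restricting $\mathcal G$ to each plane and invoking the known $\mathbb P^2$ results of Jouanolou/\cite{lnssc}, after checking the restricted foliation is generic.)

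The heart of the argument is therefore: for each fixed degree $k$, exhibit one explicit $\mathcal G_0\in\mathcal L\mathcal V(d,3)$ whose only invariant hypersurface of degree $\le k$ consists of the four coordinate hyperplanes. The plan is to choose $\mathcal G_0$ of Lotka--Volterra \emph{diagonal-linear} type, i.e.\ take the $P_{i,j}$ so that in the affine chart the foliation is generated by a vector field $X=\sum_i x_i\, \ell_i(x)\, \partial/\partial x_i$ with $\ell_i$ generic linear forms (the classical Lotka--Volterra system), or a suitable higher-degree analogue when $d>2$. For such a field the coordinate hyperplanes are obviously invariant. To show there are no others, I would pass to logarithmic coordinates / the associated linear action: on the torus $(\mathbb C^*)^3$ the field $X$ pushes forward under $u_i=\log x_i$ to a \emph{linear} (or polynomial with linear principal part) vector field, and an invariant hypersurface of $X$ not contained in the coordinate planes would give a polynomial first integral-type relation that, after taking $\log$, must be an eigenfunction / semi-invariant of the linear part. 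Choosing the eigenvalues of the linear part to be $\mathbb Q$-linearly independent (a generic, non-resonant choice) kills all polynomial semi-invariants of bounded degree, contradicting the existence of $V$. This is exactly the mechanism used in \cite{costa} and \cite{lnssc} for $\mathbb P^2$ with invariant lines, and I expect it to go through in $\mathbb P^3$ with the four invariant planes, giving emptiness of $\Sigma_k$ minus the coordinate planes near $\mathcal G_0$, hence $\Sigma_k$ proper.

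The main technical obstacle, and the reason the statement is phrased with a \emph{dense} rather than \emph{open and dense} subset (contrast with the $\mathbb P^2$ results where openness was available from \cite{ln}), is controlling the \emph{lower-dimensional} invariant algebraic sets and the degenerations that occur when $d$ is large: one must verify that the restriction of a generic $\mathcal G\in\mathcal L\mathcal V(d,3)$ to each coordinate plane $(x_i=0)\cong\mathbb P^2$ is itself a foliation with only the three coordinate lines as invariant algebraic curves, which requires the restricted foliation to remain generic inside its own Lotka--Volterra-type family on $\mathbb P^2$ — this compatibility of the genericity conditions across all four restrictions, together with the non-resonance condition on the ambient field, is the delicate bookkeeping step. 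I would handle it by checking that the restriction map $\mathcal L\mathcal V(d,3)\to \mathcal L\mathcal V(d,2)$ (to each face) is dominant, so that the preimage of the dense ``good'' subset on $\mathbb P^2$ (which exists by the cited $\mathbb P^2$ results) is again dense, and then intersect these four dense sets with the non-resonant locus of the ambient linear part. Finally, assembling: an invariant algebraic set $Z$ of $\mathcal G$ of pure dimension $\le 2$ either has a component that is a hypersurface — excluded by the $\Sigma_k$ argument — or lies in the singular locus / in $\bigcup(x_i=0)$, and in the latter case its components are invariant algebraic subsets of the restricted $\mathbb P^2$ foliations, hence among the lines $(x_i=x_j=0)$; this closes the proof.
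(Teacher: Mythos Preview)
Your proposal has a genuine gap in the treatment of invariant algebraic \emph{curves}. You write that ``an invariant curve or point forces, by taking the Zariski closure of the union of leaves through it or using the classical argument, an invariant hypersurface or is contained in one of the coordinate planes,'' and later that a pure-dimension-$\le 2$ invariant set ``either has a component that is a hypersurface \ldots\ or lies in the singular locus / in $\bigcup(x_i=0)$.'' But $\mathcal G$ is a \emph{one}-dimensional foliation on $\mathbb P^3$: its leaves \emph{are} curves. An irreducible invariant algebraic curve $\Gamma$ is (away from $\mathrm{sing}(\mathcal G)$) simply an algebraic leaf, and the Zariski closure of the union of leaves through it is $\Gamma$ itself, not a surface. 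There is no mechanism that forces an invariant curve outside the coordinate planes to produce an invariant hypersurface, so your $\Sigma_k$ argument (which concerns only invariant \emph{surfaces}) says nothing about such curves. This is not a bookkeeping issue; it is the main content of the problem in dimension three.

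The paper handles exactly this case by a completely different method: residue/index theory in the sense of Lehmann and Soares. One first restricts to the open dense set of $\mathcal G\in\mathcal L\mathcal V(d,3)$ with only nondegenerate singularities and hyperbolic eigenvalue ratios, so that any invariant curve has at most three smooth pairwise-transverse branches at each singular point. Then, for an invariant curve $\Gamma$, \cite{soares} assigns to each branch a residue of the form $\sigma^{i,j,k}_l=(\lambda_j+\lambda_k)/\lambda_i$, and the total must be a positive integer; for an invariant smooth surface $\mathcal S$ (smoothness being forced for $\mathcal G$ with isolated singularities, again by \cite{soares}), the relevant residues are $\zeta^{i,j,k}_l=\lambda_i^2/(\lambda_j\lambda_k)$, and their sum must equal $\deg\mathcal S\in\mathbb N$. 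Imposing that none of these residues (and certain finite $\mathbb N$-linear combinations of them) lie in $\mathbb Z_{\ge 1}$ gives a countable family of real-codimension-$\ge 1$ conditions, whose complement is dense; an explicit degree-$2$ example is exhibited to show nonemptiness, and higher degrees are obtained by pulling back along $[x_0:\cdots:x_3]\mapsto[x_0^{d-1}:\cdots:x_3^{d-1}]$. Your logarithmic-coordinates / non-resonance sketch could plausibly be made to work for \emph{surfaces} (Darboux semi-invariants), but as written it does not, and cannot, cover invariant curves in the interior of the torus; that is where the argument breaks.
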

Let us indicate some differences between the work \cite{costaln} and the current situation.  
In \cite{costaln}, the authors work with a class of generic $1$-dimensional foliations in $\mathbb P^{n}$ which does not have any algebraic invariant set of positive dimension. This is guaranteed by the works of \cite{cope} and \cite{lnsoares}. As a consequence, in that work, a generic pull-back foliation has no algebraic leaf.  On the other hand, in the current situation, the $1$-dimensional foliations that we have to consider are leaving $2$-dimensional planes invariant and consequently the generic elements of  $PB(\Theta_{\nu,d}^{\alpha,\beta,\gamma,\delta},2,4)$ will always have invariant algebraic surfaces. The quantity of such algebraic leaves will depend of cases $(1),\dots,(4)$. For instance, in case (1), a generic element of $PB(\Theta_{\nu,d}^{\alpha,\beta,\gamma,\delta},2,4)$ will have four algebraic leaves while in case (4) a generic element will have only one. The presence of branched maps makes the singular set of pull-back foliations more degenerate and therefore the study of the quasi-homogeneous singularities that arise in this problem leads us to use weighted projective spaces techniques and also weighted blow-ups which is precisely where the assumptions on $\alpha$, $\beta$, $\gamma$ and $\delta$ are essential. This exemplifies that our results were not known previously. 

\section{Generalized Lotka-Volterra foliations and the proof of Theorem \ref{teoB}}\label{Generalized Lotka-Volterra foliations}

Initially, let us observe that if $\mathcal G$ is a $1$-dimensional foliation of degree $d$ on $\mathbb P^3$, alternatively, it can be described in affine coordinates $(w_1,w_2,w_3)$ by a vector field of the form 
 $X=gR+\sum_{l=0}^{d}X_{l}$ where $g$ is a homogeneous polynomial of degree $d$, $R$ is the radial vector field $w_1\frac{\partial}{\partial w_1}+w_2\frac{\partial}{\partial w_2}+w_3\frac{\partial}{\partial w_3}$ and $X_l$ is a vector field whose components are homogeneous polynomials of degree $l$, $0\leq l\leq d$. If $g\not\equiv0$ then $X$ has a pole of order $d-1$ at infinity. If $\Gamma\subset \mathbb P^3$ is an irreducible algebraic invariant curve under $\mathcal G$, we say that  $\Gamma$ is an algebraic solution of $\mathcal G$ if  $\Gamma\backslash Sing(\mathcal G)$ is a leaf of the foliation where $Sing(\mathcal G)$ denotes the singular set of $\mathcal G$. In what follows, by invariant algebraic set of $\mathcal G$ we mean either an algebraic solution or an algebraic surface $\mathcal S\subset \mathbb P^3$ invariant by the foliation.  By a {\it{generalized Lotka-Volterra foliation}} of generic type and degree $d\geq2$ we mean a foliation represented by a vector field as above and such that:
 
 \begin{enumerate}
 \item at each $p\in Sing(X)$ we have $detDX(p)\neq0$, 
 \item  if $\{ \lambda_1(p), \lambda_2(p), \lambda_ 3(p)\}$ are eigenvalues of  $DX(p)$ then they satisfy $\frac{\lambda_i(p)}{\lambda_j(p)}\notin \mathbb R^{+}$(is not a positive real number for $j\neq i$),
\item a finite number of sums of ``residues'' (which are rational functions involving $\lambda_1(p), \lambda_2(p)$ and $\lambda_ 3(p)$), associated to the foliation at singular points, are not certain positive integers.
 \end{enumerate}

 These are sufficient conditions for a {\it{generalized Lotka-Volterra foliation}} to have no invariant algebraic sets different to the planes $(x_0x_1x_2x_3=0)$ and the intersections $(x_i=x_j=0)$ for $i\neq j$. First recall that if a smooth algebraic curve is invariant by a foliation on $\mathbb P^3$ the curve must contain a singular point of the foliation, for otherwise we get a holomorphic foliation with a compact leaf, which is impossible. Now suppose we have an algebraic invariant curve; then $(2)$ says that this curve can not have singular analytic nor smooth tangent branches at each of its singular points and also that the number of branches at a singular point is bounded by three, so we are reduced to consideration of invariant algebraic curves whose singularities, if any, have only smooth analytic no two of which are tangent. In this case we bring in a Theorem due to D. Lehmann \cite{leh}. The idea is that certain characteristic classes of bundles associated to the ambient complex manifold and to the foliation, as well as to invariant submanifolds, ``localize'' near the singular set of the foliation, giving rise to residues computable through local data for the foliation and whose sum give characteristic numbers of these bundles. 
 Condition $(3)$ means precisely that the sum of residues cannot be a characteristic number associated to a convenient bundle, thus ruling out the existence of certain algebraic solutions. We use the same idea to the case of existence of surfaces invariant by the foliation.

Let us denote by $\mathbb{F}{\rm{ol}}\left(d;1,3\right)$ the space of one dimensional foliations on $\mathbb P^{3}$ and denote by $ND(d;1,3)$ the subset of the foliations which have nondegenerate singularities. It is worth pointing out that for any $\mathcal G\in ND(d;1,3)$ it has exactly $N=\frac{d^{4}-1}{d-1}$ singularities. The proof of Theorem \ref{teoB} run as follows: First of all, from \cite{lnsoares} (p.670-p.672) we know that there exists an open, dense and connected subset $\mathcal W\subset ND(d;1,3)$ such that for any foliation $\mathcal G$ in $\mathcal W$ the linear part at each singularity has distinct eigenvalues and furthermore at each $p\in Sing(\mathcal G)$ we have $\frac{\lambda_i}{\lambda_j}\notin \mathbb R^{+}$. Let us denote by $\mathcal A(d;1,3)=\mathcal L\mathcal V(d,3)\cap\mathcal W$.

 First of all, we will recall the following lemma that can be found in \cite{lnsoares} p.$668$. 
 
 \begin{Lemma}\label{marcio}   $ND(d;1,3)$ is open, dense and connected in  $\mathbb {F}ol(1,d,3)$. Moreover, given $\mathcal G_0\in ND(d;1,3)$ with $sing(\mathcal G_0)=\{p_1,\dots,p_N\}$ there are neighborhoods $\mathcal U_0$ of $\mathcal G_0$  in $\mathbb {F}ol(1,d,3)$, $V_l$ of $p_l$ in  $\mathbb P^3$ and analytic functions $\psi_l:\mathcal U_0\to V_l$, $l=1,\dots, N$ such that $V_l\cap V_m\neq \emptyset$, $l\neq m$, and for any $\mathcal G \in \mathcal U_0$,  $\psi_l(\mathcal G)$ is the unique singularity of $\mathcal G$ in $V_l$. 
 \end{Lemma}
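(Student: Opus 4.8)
The plan is to prove Lemma~\ref{marcio} by a standard application of the implicit function theorem, exactly as in \cite{lnsoares}, so I will sketch the argument rather than reproduce every computation. Recall that a one-dimensional foliation $\mathcal G$ of degree $d$ on $\mathbb P^3$ is given by a global section of $T\mathbb P^3\otimes \mathcal O(d-1)$, equivalently in homogeneous coordinates by a polynomial vector field $Z=\sum_{j=0}^{3}Z_j\frac{\partial}{\partial z_j}$ with the $Z_j$ homogeneous of degree $d$, modulo addition of a multiple of the radial field $R$ and modulo scalars; thus $\mathbb {F}ol(1,d,3)$ is naturally a projective space of dimension $N_d:=4\binom{d+3}{3}-\binom{d+2}{3}-1$. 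A singularity of $\mathcal G$ is a point $p\in\mathbb P^3$ where $Z(p)\in\mathbb C\cdot p$ (a ``radial'' relation), and it is nondegenerate when the induced endomorphism of $T_p\mathbb P^3$ is invertible; by Bézout this happens at exactly $N=\frac{d^4-1}{d-1}$ points counted with multiplicity, so nondegeneracy is equivalent to these $N$ points being distinct.

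First I would exhibit one explicit foliation $\mathcal G_0\in ND(d;1,3)$ — for instance the one induced by a diagonal linear vector field $\mathrm{diag}(\mu_0,\mu_1,\mu_2,\mu_3)$ composed appropriately, or a Jouanolou-type example — to see that $ND(d;1,3)$ is nonempty. Then openness is immediate: the resultant/discriminant that detects a collision of two of the $N$ singular points, or vanishing of one of the $N$ determinants, is a polynomial (a Zariski-closed condition) on $\mathbb {F}ol(1,d,3)$, whose complement is $ND(d;1,3)$; hence $ND$ is Zariski-open, in particular open and dense once nonempty, and connected because a nonempty Zariski-open subset of an irreducible variety is connected in the usual topology. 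For the local analytic continuation statement, fix $\mathcal G_0$ with $\mathrm{sing}(\mathcal G_0)=\{p_1,\dots,p_N\}$ and represent it near each $p_l$ in an affine chart by a holomorphic vector field $X^{(0)}$ with $X^{(0)}(p_l)=0$ and $DX^{(0)}(p_l)$ invertible. Consider the map $\mathcal U_0\times V_l\to\mathbb C^3$, $(\mathcal G,q)\mapsto X_{\mathcal G}(q)$, where $X_{\mathcal G}$ is the chart representative of $\mathcal G$; it is holomorphic in both variables (polynomial in the coefficients of $\mathcal G$), vanishes at $(\mathcal G_0,p_l)$, and its $q$-derivative at that point is the invertible matrix $DX^{(0)}(p_l)$. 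The implicit function theorem then yields, after shrinking $\mathcal U_0$ and $V_l$, a unique holomorphic $\psi_l:\mathcal U_0\to V_l$ with $X_{\mathcal G}(\psi_l(\mathcal G))=0$ and $\psi_l(\mathcal G_0)=p_l$; shrinking further so that the $V_l$ are pairwise disjoint and small enough that $\mathcal G$ has no other singularity in $V_l$ (possible by continuity of the $N$-point zero-scheme, or by noting that any $\mathcal G\in\mathcal U_0$ still has exactly $N$ distinct singularities, one in each $V_l$), gives the claimed functions.

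The only genuinely delicate point is the clause $V_l\cap V_m\neq\emptyset$ — which I read as a typo for $V_l\cap V_m=\emptyset$, $l\neq m$ — and the assertion that each $\mathcal G\in\mathcal U_0$ has \emph{exactly one} singularity in each $V_l$ and none elsewhere: this needs that the total count of singularities is conserved, which follows because nondegeneracy (already guaranteed on $\mathcal U_0\subset ND$) forces all $N$ intersection multiplicities to equal $1$, so Bézout's $N$ is attained with no singularity escaping to the boundary of $\bigcup V_l$, provided $\mathcal U_0$ is chosen small enough that $\mathrm{sing}(\mathcal G)\subset\bigcup_l V_l$. I would justify this last containment by a compactness argument: $\mathbb P^3\setminus\bigcup_l V_l$ is compact and disjoint from $\mathrm{sing}(\mathcal G_0)$, the function $(\mathcal G,q)\mapsto\mathrm{dist}(Z_{\mathcal G}(q),\mathbb C\cdot q)$ is continuous and positive there for $\mathcal G=\mathcal G_0$, hence positive on a neighborhood, so after shrinking $\mathcal U_0$ no singularity of any $\mathcal G\in\mathcal U_0$ lies outside $\bigcup_l V_l$. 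Combining this with the $N$ implicit-function branches $\psi_l$ and the multiplicity-one count completes the proof. I expect no real obstacle here; this is an infrastructure lemma, and the substance of the section lies in the subsequent use of conditions (1)--(3) and Lehmann's residue theorem to rule out invariant algebraic sets.
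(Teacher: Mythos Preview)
Your sketch is correct and follows the standard argument from \cite{lnsoares}; the paper itself does not give a proof of this lemma but simply recalls it from that reference (p.~668), so there is nothing to compare beyond noting that your outline matches the cited source. You are also right that the clause $V_l\cap V_m\neq\emptyset$ is a typo for $V_l\cap V_m=\emptyset$.
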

   \begin{proof}[Proof of Theorem B]
  For each $l\in\{1,\dots,N\}$ and $1\leq j\leq3$ we denote by $\lambda_j(\psi_l(\mathcal G,p_l))$ the $j^{th}$eingenvalue of the linear part of the singularity $\psi_l(\mathcal G,p_l)$ as in Lemma \ref{marcio}.
   Now we present a list of conditions relating the eigenvalue of the foliation $(\mathcal G,p_l)$ in the point $\psi_l(\mathcal G,p_l)$:
  \begin{enumerate}
   \item  The Kupka condition:
${\lambda_1(\psi_l(\mathcal G,p_l))}+{\lambda_2(\psi_l(\mathcal G,p_l))}+{\lambda_3(\psi_l(\mathcal G,p_l))}\neq0$.\\ 
 \noindent The conditions of being hyperbolic are as follows:\\
  \item $\frac{\lambda_1(\psi_l(\mathcal G,p_l))}{\lambda_2(\psi_l(\mathcal G,p_l))}\notin\mathbb R$  
  \item $\frac{\lambda_1(\psi_l(\mathcal G,p_l))}{\lambda_3(\psi_l(\mathcal G,p_l))}\notin\mathbb R$
  \item $\frac{\lambda_2(\psi_l(\mathcal G,p_l))}{\lambda_3(\psi_l(\mathcal G,p_l))}\notin\mathbb R$
       \noindent Let us now treat the condition on the residues. Define
  \begin{equation*}\label{eq1}
  \sigma_l^{i,j,k}(\mathcal G,p_l))=\frac{\lambda_j(\psi_l(\mathcal G,p_l))+\lambda_k(\psi_l(\mathcal G,p_l))}{\lambda_i(\psi_l(\mathcal G,p_l))}
  \end{equation*} 
  
    \begin{equation*}\label{eq2}
  \zeta_l^{i,j,k}(\mathcal G,p_l)=\frac{[\lambda_i(\psi_l(\mathcal G,p_l))]^{2}}{\lambda_j(\psi_l(\mathcal G,p_l))\lambda_k(\psi_l(\mathcal G,p_l))}
    \end{equation*}  where $(i,j,k)\in\{(1,2,3),(2,1,3),(3,1,2)\}$ and let
  $I_q=\{m\in\mathbb N: m\geq q\}$.
  
 We need that these numbers also satisfy the following conditions:

 \item  $\sigma_l^{1,2,3}(\mathcal G,p_l)\notin I_1$
 \item  $\sigma_l^{1,3,2}(\mathcal G,p)\notin I_1$
 \item  $\sigma_l^{2,3,1}(\mathcal G,p_l)\notin I_1$
 \item  $\zeta_l^{1,2,3}(\mathcal G,p_l)\notin I_1$

  \item  $\zeta_l^{1,3,2}(\mathcal G,p_l)\notin I_1$
 \item  $\zeta_l^{2,3,1}(\mathcal G,p_l)\notin I_1$

  \item  $[\sigma_l^{1,2,3}(\mathcal G,p_l)+\sigma_l^{2,1,3}(\mathcal G,p_l)+\sigma_l^{3,1,2}(\mathcal G,p_l)]\notin I_1$
 
  \item $[\sigma_l^{1,2,3}(\mathcal G,p_l)+\sigma_l^{1,3,2}(\mathcal G,p_l)]\notin I_1$
  \item $[\sigma_l^{1,2,3}(\mathcal G,p_l)+\sigma_l^{2,3,1}(\mathcal G,p_l)]\notin I_1$
  \item $[\sigma_l^{1,3,2}(\mathcal G,p_l)+\sigma_l^{2,3,1}(\mathcal G,p_l)]\notin I_1$
  
  \end{enumerate}
To simplify, let
 \begin{itemize}
\item  $H_i:=(x_i=0)$, $0\leq i\leq 3$ (the invariant $2$-planes)
 \item  $L_{ij}:=(x_i=x_j=0)$, $0\leq i<j\leq 3$ (the invariant lines)
 \item $p_{ijk}=(x_i=x_j=x_k=0)$, $0\leq i<j<k\leq 3$ 
 \end{itemize}

To continue let us denote four subsets of foliations as follows:
 \begin{enumerate}
 \item $\mathcal W_0^{\ast}=\{ \mathcal G \in \mathcal A\left(d,3\right)$ which satisfies the conditions $2,3$ and $4$ at the singularity $p_{ijk}\in H_i\cap H_j\cap H_k\}$.
 \item $\mathcal W_1^{\ast}=\{ \mathcal G \in \mathcal A\left(d,3\right)$ which satisfies the conditions $2,3$ and $4$ at the singularity $p_{ij}\in (L_{ij})\backslash (H_i\cap H_j\cap H_k)\}$.
 \item $\mathcal W_2^{\ast}=\{ \mathcal G \in \mathcal A\left(d,3\right)$ which satisfies the conditions $2,3$ and $4$ at the singularity $p_{i} \in H_i\backslash(L_{ij}\cup L_{ik}\cup L_{jk})\}$.
 \item $\mathcal W_3^{\ast}=\{ \mathcal G \in \mathcal A\left(d,3\right)$ which satisfies the conditions $2,3$ and $4$ at the singularity $p_{m}\notin H_i\cup H_j \cup H_k\}$.
 \end{enumerate}
  By a construction analogous to the one found in (\cite[section 3, p.669-670]{lnsoares}) we have that each $\mathcal W_i^\ast$, $0\leq i\leq 3$, is open and dense in $\mathcal A\left(d,3\right)$. Furthermore, observe that any element in $\mathcal L\mathcal V(d,3)$ has symmetries. Hence, we have that any foliation in $\mathcal A\left(d,3\right)$  belongs to one of those $\mathcal W_i^\ast$, $0\leq i\leq 3$.
  Therefore for our purpose it suffices to study a element $\mathcal G$ on an affine chart.  \par
Let us now show that these sets are non-empty. For this let us consider the Lotka-Volterra foliation of degree $2$ in an affine chart $(\mathbb C^3, w)$ where $w=(w_1,w_2,w_3)$:

\begin{empheq}[left=\empheqlbrace]{align}\label{afim}
 \dot{w_1}&= w_1(w_1+ (-i-\sqrt{2})) \nonumber \\\nonumber 
 \dot{w_2}&= w_2(iw_2+ 4i) \nonumber \\\nonumber 
 \dot{w_3}&= w_3(w_3+1)
\end{empheq}\\
\\

It is enough to choose 4 singularities satisfying conditions $(1)-(14)$.

We do necessary computations using Mathematica; the results are shown in Table $1$ as the reader can see. For this equation the singular points described in the table are: $p_{123}=(0,0,0)  ,p_{12}=(0,-4,0) ,p_{1}=(0,-4,-1) $   and $p_{m}=(i+\sqrt{2},-4,-1)$.

For each $0\leq i\leq3$,  we define $\mathcal W_i$ to be the subset of foliations $\mathcal G \in \mathcal W_i^\ast$ that also satisfy the conditions $\{1,5,\dots,14\}$. Of course, each $\mathcal W_i^{\ast}\cap N(1;d,3)$ is dense in $\mathcal W_i$ because negating the conditions $\{1,5,\dots,14\}$ gives a denumerable set of analytic subsets of codimension 1 of $\mathcal W_i^{\ast}$. We denote $\bigcap_{i=0}^{i=3}\mathcal W_i$ by $\mathcal M(d,3)$. The previous example shows that each one of those $\mathcal W_i$ is non-empty.

{\footnotesize
\begin{table}[]
\centering

\label{tabela}
\begin{tabular}{|r | c | c | c | c |}
\hline
Singularity  & $p_{123}\in H_1\cap H_2\cap H_3$ & $p_{12}\in (L_{11})\backslash (H_i\cap H_2\cap H_3)$ & $p_{1}\in H_1\backslash(L_{12}\cup L_{13}\cup L_{13})$ & $ p_{m}\notin H_i\cup H_j \cup H_k$\\
\hline
\cline{1-5}
\cline{2-5}
                       &$\lambda_{1}=4i $ \ \ \ \ \ \ \ \ \               &$\lambda_{1}=-4i$    \ \ \ \ \ \       &$\lambda_{1}=-4i$  \ \ \ \ \ \ \ \ \  &$\lambda_{1}=-4i$ \ \ \ \ \ \ \ \ \  \\
\cline{2-5}
  Eigenvalues &$\lambda_{2}=-i-\sqrt{2}$ &$\lambda_{2}=-i-\sqrt{2}$&$\lambda_{2}=-i-\sqrt{2}$  &$\lambda_{2}=i+\sqrt{2}$\\ 
  \cline{2-5}
                       &$\lambda_{3}=1$  \ \ \ \ \ \ \ \ \ \          &$\lambda_{3}=1$    \ \ \ \ \ \ \ \ \ \     &$\lambda_{3}=-1$   \ \ \ \ \ \  \ \ \ \  &$\lambda_{3}=-1$\ \ \ \ \ \ \ \ \  \\
\hline
Condition 1  &      $(1+3i)-\sqrt{2}$  & $ (1-5i)-\sqrt{2} $  & $(-1-5i)-\sqrt{2}$ & $(-1-3i)+\sqrt{2}$ \\
\hline
Condition 2  &        $-\frac{4}{3}i(-i+\sqrt{2})$    & $ \frac{4}{3}(1+i\sqrt{2})$ &  $ \frac{4}{3}(1+i\sqrt{2})$ & $-\frac{4}{3}i(-i+\sqrt{2})$ \\
\hline
Condition 3  &      $4i$ &  $-4i$ & $4i$ & $4i$ \\
\hline
Condition 4  &    $-i-\sqrt{2}$ & $-i-\sqrt{2}$ & $i+\sqrt{2}$ & $-i-\sqrt{2}$ \\
\hline
Condition 5  &     $3i-\sqrt{2}$  & $-5i-\sqrt{2}$  & $5i+\sqrt{2}$ & $3i-\sqrt{2}$ \\
\hline
Condition 6  &     $-\frac{1+4i}{i+\sqrt{2}}$ &  $-\frac{1-4i}{i+\sqrt{2}}$ & $\frac{1+4i}{i+\sqrt{2}}$ &$ -\frac{1+4i}{i+\sqrt{2}}$ \\
\hline
Condition 7  &      $\frac{i}{4}((-1+i)+\sqrt{2})$ & $\frac{1}{4}((1+i)-i\sqrt{2})$  & $-\frac{i}{4}((1+i)+\sqrt{2})$ & $\frac{i}{4}((-1+i)+\sqrt{2})$  \\
\hline
Condition 8  &      $\frac{16}{i+\sqrt{2}}$  &  $\frac{16}{i+\sqrt{2}}$ & $-\frac{16}{i+\sqrt{2}}$ & $\frac{16}{i+\sqrt{2}}$ \\
\hline
Condition 9  &      $-\frac{i}{4}+\frac{1}{\sqrt{2}}$ &  $\frac{i}{4}-\frac{1}{\sqrt{2}}$ &  $-\frac{i}{4}+\frac{1}{\sqrt{2}}$ & $-\frac{i}{4}+\frac{1}{\sqrt{2}}$  \\
\hline
Condition 10 &      $\frac{1}{4(1-i\sqrt{2})}$ & $ \frac{1}{4(-1+i\sqrt{2})} $  & $\frac{1}{4(-1+i\sqrt{2})}$ & $\frac{1}{4(i-\sqrt{2})}$  \\
\hline
Condition 11 &    $ -\frac{(23+15i)+(2-7i)\sqrt{2}}{4(i+\sqrt{2})}  $     & $ \frac{(7+15i)+(2-23i)\sqrt{2}}{4(i+\sqrt{2})}  $  & $ \frac{(-7+15i)+(2+23i)\sqrt{2}}{4(i+\sqrt{2})}  $ & $ -\frac{(23+15i)+(2-7i)\sqrt{2}}{4(i+\sqrt{2})}  $  \\
\hline
Condition 12 &     $\frac{2i((-2+3i)+\sqrt{2})}{i+\sqrt{2}}$ & $ 5i+\sqrt{2}+\frac{1+4i}{i+\sqrt{2}}$  & $\frac{2i((2+i)+3\sqrt{2})}{i+\sqrt{2}}$ & $\frac{2i((-2+3i)+\sqrt{2})}{i+\sqrt{2}}$ \\
\hline
Condition 13 &     $\frac{i}{4}((11+i)+(1+4i)\sqrt{2})$ & $\frac{1}{4}((1+19i)+(4-i)\sqrt{2})$ & $\frac{1}{4}((1+19i)+(4-i)\sqrt{2})$ & $\frac{i}{4}((11+i)+(1+4i)\sqrt{2})$ \\
\hline
Condition 14 &     $-\frac{(3+15i)+(2+i)\sqrt{2}}{4(i+\sqrt{2})}$ & $\frac{(5+15i)+(2-i)\sqrt{2}}{4(i+\sqrt{2})}$  & $\frac{(5+15i)+(2-i)\sqrt{2}}{4(i+\sqrt{2})}$  & $-\frac{(3+15i)+(2+i)\sqrt{2}}{4(i+\sqrt{2})}$ \\
\hline
\end{tabular}
\caption{Singularities and their conditions.}
\end{table}
}

 Therefore, $\mathcal M(d,3)$ is dense in $\mathcal A(d,3)$.
 
 To finish, it remains to prove that any $\mathcal G\in \mathcal M(d,3)$ does not have any algebraic invariant object of positive dimension other than the $2$-planes $H_i$ and the lines $L_{ij}$.

 \begin{enumerate}
 \item  Non-existence of invariant algebraic curves other than the lines $L_{ij}$.\\

 Suppose $\Gamma\subset \mathbb P^{3}$ is an irreducible curve whose singularities, in case they exist, are such that $\Gamma$ has only smooth analytic branches through  each of them. Assume $\Gamma$ is invariant by $\mathcal G$. Then $sing(\mathcal G)\cap \Gamma\neq \emptyset$ and moreover, if $p_l\in sing(\mathcal G)$ then the branches of $\Gamma$ through $p$ are transverse to each other. According to \cite[Lemma 2.4, p.147 and Remark 2.5 p.148]{soares}, we can associate to each branch an index (a type of residue) those  $\sigma_l^{i,j,k}(\mathcal G,p_l)$ and the contribution of all these index is an $\mathbb N$-linear combination of the $\sigma_l^{i,j,k}(\mathcal G,p_l)$, which is always a natural number. But in the way that we have chosen our singularities, our residues are not positive real numbers. Therefore it does not exist any invariant algebraic curve other than the lines $L_{ij}$. 
  \item Non-existence of invariant algebraic surfaces other than the $H_i$. \\
     Suppose $\mathcal S$ is an algebraic surface invariant by $\mathcal G$ other than the coordinate $2-$planes. On one hand, according to Lemma $5.2$ from \cite{soares} we have that or $S$ is smooth or it would have a curve of singular points for the foliation. Since we are considering foliations only with isolated singular points we conclude that $S$ has to be smooth. Now using the vanishing theorem \cite{leh} we must have that $\mathcal S\cap sing(\mathcal G)\neq \emptyset$. Hence  we can associate to each germ of a surface passing through a singular point $p_l$ an index (a type of residue), in fact an $\mathbb N$-linear combination of the $\zeta_l^{i,j,k}(\mathcal G,p_l)$ and the contribution of all these index has to be the projective degree of the surface which is a natural number according to \cite[Theorem 2.1 and Remark 2.2 p.145]{soares}. On the other hand, in the way the we chose our singularities, our residues are not positive real numbers. Hence there is no such $\mathcal S$ invariant under the foliation.
  
 \end{enumerate}
 To finish we just take the foliation $\mathcal G \in \mathcal M(2,3)$ as before and take the ramification $T:\mathbb P^3\to \mathbb P^3$ given by $T([x_0:x_1:x_2:x_3])=([x_0^{d-1}:x_1^{d-1}:x_2^{d-1}:x_3^{d-1}])$. The pull-back foliation $T^{\ast}\mathcal G$ is an element of $\mathcal M(d,3)$ because $T$ neither produces nor contracts any curve or surface. This finishes the proof.

 \end{proof}

\section {Rational maps}
Let $f : {\mathbb P^4}  \DashedArrow[->,densely dashed    ]   {\mathbb P^{3}}$ be a rational map, and let $\tilde{f}: {\mathbb C^{5}} \to {\mathbb C^{4}}$ its natural lifting in homogeneous coordinates. We characterize the set of rational maps used throughout this text as follows:

{\begin{definition} We denote by $RM^{\alpha,\beta,\gamma,\delta}\left(4,3,\nu\right)$ the set of the branched rational maps $f: \mathbb P^4  \DashedArrow[->,densely dashed    ]   \mathbb P^{3}$ of degree $\nu\geq 2$ given by $f=\left(F_{0}^\alpha:F_{1}^\beta:F_{2}^\gamma:F_{3}^\delta\right)$  where, the $F_{js}$, are irreducible homogeneous polynomials without common factors, satisfying $$\alpha.\deg F_{0}=\beta.\deg F_{1}=\gamma.\deg F_{2}=\delta.\deg F_3=\nu\geq2.$$ 
\end{definition}}

The \emph{indeterminacy locus} of $f$ is, by definition, the set \break$I\left(f\right)=\Pi_{4}\left(\tilde{f}^{-1}\left(0\right)\right)$, where $\Pi_{4}:\mathbb C^{5}\backslash \{0\}\to\mathbb P^{4}$ is the canonical projection. Observe that the restriction $f|_{\mathbb P^4 \backslash I\left(f\right)}$ is holomorphic. 

\begin{definition}\label{generic} We say that $f  \in RM^{\alpha,\beta,\gamma,\delta}\left(4,3,\nu\right)$ is $generic$ if for all  \break$p \in$ $\tilde{f}^{-1}\left(0\right)\backslash\left\{0\right\}$ we have $dF_{0}\left(p\right)\wedge dF_{1}\left(p\right)\wedge dF_{2}\left(p\right)\wedge dF_{3}\left(p\right) \neq 0.$  
\end{definition}

This is equivalent to saying that $f  \in RM^{\alpha,\beta,\gamma,\delta}\left(4,3,\nu\right)$ is $generic$ if $I(f)$ is the nontangential intersection of the four hypersurfaces $\Pi_4(F_{i}=0)$ for $i=0,...,3.$  Moreover if $f$ is generic and $\deg(f)=\nu$, then by Bezout's theorem $I\left(f\right)$ is discrete and  consists of $\frac{\nu^{4}}{\alpha\beta\gamma\delta}$ distinct points. The set of  generic branched rational maps of degree $\nu$ will be denoted by $Gen^{\alpha,\beta,\gamma,\delta}\left(4,3,\nu\right)$. The next result is standard in algebraic geometry.  \begin{proposition} $Gen^{\alpha,\beta,\gamma,\delta}\left(4,3,\nu\right)$ is a Zariski open and dense subset of $RM^{\alpha,\beta,\gamma,\delta}\left(4,3,\nu\right)$.
\end{proposition}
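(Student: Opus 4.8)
The statement to prove is that $Gen^{\alpha,\beta,\gamma,\delta}(4,3,\nu)$ is Zariski open and dense in $RM^{\alpha,\beta,\gamma,\delta}(4,3,\nu)$. The plan is to exhibit $RM^{\alpha,\beta,\gamma,\delta}(4,3,\nu)$ as a quasi-projective parameter space, realize the genericity condition of Definition \ref{generic} as the complement of a closed algebraic subvariety (the ``bad locus''), and then show this closed set is proper by producing a single generic map.

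First I would describe the ambient space. Writing $a=\nu/\alpha$, $b=\nu/\beta$, $c=\nu/\gamma$, $e=\nu/\delta$ for the degrees of $F_0,F_1,F_2,F_3$ (these are forced to be positive integers by the arithmetic hypotheses on $\alpha,\beta,\gamma,\delta$), the tuple $(F_0,F_1,F_2,F_3)$ is a point of the product of projective spaces $\P H^0(\P^4,\mathcal O(a))\times\cdots\times\P H^0(\P^4,\mathcal O(e))$. Inside this product, the condition ``$F_i$ irreducible'' is Zariski open (the locus of reducible or non-reduced forms of fixed degree is closed — it is the image of a product of lower-degree form-spaces under multiplication, hence constructible with closed image by properness of projective space), and ``$F_0,\dots,F_3$ have no common factor'' is likewise open for the same reason; their intersection is a Zariski open set $U$, and $RM^{\alpha,\beta,\gamma,\delta}(4,3,\nu)$ is exactly the image of $U$, so it is quasi-projective.

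Next I would handle the genericity condition. Consider the incidence variety
\[
Z=\bigl\{\,((F_i),[p])\in U\times\P^4 \;:\; F_0(p)=\cdots=F_3(p)=0,\ dF_0\wedge dF_1\wedge dF_2\wedge dF_3(p)=0\,\bigr\}.
\]
Both conditions are algebraic in $((F_i),p)$: the vanishing of the $F_i$ is bihomogeneous polynomial, and $dF_0\wedge dF_1\wedge dF_2\wedge dF_3$ is (up to sign) the Jacobian determinant of $(F_0,\dots,F_3)$ in the five variables contracted with the radial vector field — equivalently, all $4\times 4$ minors of the Jacobian matrix vanish at $p$ — again polynomial conditions. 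Hence $Z$ is Zariski closed in $U\times\P^4$. Since $\P^4$ is projective, the projection $\pi_1\colon U\times\P^4\to U$ is proper, so $\pi_1(Z)$ is Zariski closed in $U$. By definition $f\in Gen^{\alpha,\beta,\gamma,\delta}(4,3,\nu)$ iff $(F_i)\notin\pi_1(Z)$, so the generic locus is the image in $RM$ of the open set $U\setminus\pi_1(Z)$; since $U\to RM$ is a quotient by the free scaling action on each factor (or simply a morphism with open image compatible with the $\mathbb G_m^4$-action), openness descends. This gives Zariski openness.

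The remaining — and only substantive — point is density, which reduces to showing $\pi_1(Z)\neq U$, i.e.\ that at least one generic map exists; once one point of $U$ lies outside the proper closed set $\pi_1(Z)$, irreducibility of each projective factor (hence of $U$, which is open in an irreducible variety) forces $U\setminus\pi_1(Z)$ to be dense. To produce a generic map I would pick the $F_i$ to be generic forms of the required degrees $a,b,c,e$: by Bertini-type genericity the four hypersurfaces $(F_i=0)$ meet transversally (nontangentially) along a zero-dimensional scheme, which is precisely the condition $dF_0\wedge dF_1\wedge dF_2\wedge dF_3\neq 0$ at each common zero, as noted in the paragraph preceding the Proposition; explicitly one may even take monomial-type examples and check the Jacobian rank directly. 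I expect this last existence step to be the main obstacle only in a bookkeeping sense: one must make sure the transversality genericity is compatible with the constraints that the $F_i$ be irreducible and pairwise without common factor — but a generic choice satisfies all three simultaneously since each is an open dense condition and $U$ is irreducible, so their intersection is nonempty. This completes the argument.
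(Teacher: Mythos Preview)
Your argument is correct and is exactly the standard incidence-variety/proper-projection/Bertini reasoning one would expect here. The paper itself gives no proof of this proposition, merely remarking that the result ``is standard in algebraic geometry,'' so your write-up is a faithful elaboration of what the paper takes for granted.
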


\section{Generic pairs and the description of the generic pull-back foliations}\label{par generico}

 \begin{definition} Let $f$ be an element of $Gen^{\alpha,\beta,\gamma,\delta}\left(4,3,\nu\right)$ and $\mathcal G$ $\in \mathcal M(d,3).$ We say that $(f,\mathcal G)$ is a generic pair if $[Sing(\mathcal G)\cap D(f)]=\emptyset$, where $$D(f):=[\tilde f\{z\in\mathbb C^5| dF_0(z)\wedge dF_1(z)\wedge dF_2(z)\wedge dF_3(z)\wedge dF_4(z)=0\}].$$
\end{definition}

As we know, the foliation $f^\ast \mathcal G$ is represented in homogeneous coordinates by $\tilde f^\ast \Omega$ where $\tilde f$ is the lifting of $f$ and $\Omega$ is the 2-form that represents the foliation $\mathcal G$ in homogeneous coordinates. Therefore $\tilde f^\ast \Omega$  has the following expression \begin{multline}\label{expressao homogenea}
\eta_{[f,\mathcal G]}=[ \alpha\beta F_{2}F_{3}(P_{23}\circ\tilde f) dF_{0}\wedge dF_{1}-\\\alpha\gamma F_{1}F_{3}(P_{13}\circ\tilde f)dF_{0}\wedge dF_{2}+\alpha\delta F_{1}F_{2}(P_{12}\circ\tilde f)dF_{0}\wedge dF_{3} +\\ \beta\gamma F_{0}F_{3}(P_{03}\circ\tilde f)dF_{1}\wedge  dF_{2}-  \beta\delta F_{0}F_{2}(P_{02}\circ\tilde f)dF_{1}\wedge dF_{3}+\\ \gamma\delta F_{0}F_{1}(P_{01}\circ\tilde f)dF_{2}\wedge dF_{3}]
\end{multline}

Since $P_{{i,j}}$ are homogeneous polynomials of degree $(d-1)$ and the $F_{{i}}$ satisfy the condition $\alpha.\deg F_{0}=\beta.\deg F_{1}=\gamma.\deg F_{2}=\delta.\deg F_3=\nu\geq2$ the coefficients of $\eta_{[f,\mathcal G]}$ are homogeneous of degree   $\nu[(d-1)+\frac{1}{\alpha}+\frac{1}{\beta}+\frac{1}{\gamma}+\frac{1}{\delta}]-2$. From these considerations we have
\begin{proposition}
 If $\mathcal F=f^\ast \mathcal G$ where $(f,\mathcal G)$ is a generic pair, then the degree of $\mathcal F$ is  $$\Theta_{\nu,d}^{\alpha,\beta,\gamma,\delta}=\nu[(d-1)+\frac{1}{\alpha}+\frac{1}{\beta}+\frac{1}{\gamma}+\frac{1}{\delta}]-3.$$
\end{proposition}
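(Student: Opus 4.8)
The plan is to compute the degree of the pull-back foliation $\mathcal F = f^\ast\mathcal G$ directly from the explicit expression \eqref{expressao homogenea} for $\eta_{[f,\mathcal G]}$, after first verifying that this $2$-form genuinely represents a codimension $2$ foliation of the expected degree, i.e. that its singular set has codimension $\geq 2$ and no common polynomial factor has been picked up. First I would recall the defining relation: a codimension $2$ foliation of degree $\Theta$ on $\mathbb P^4$ corresponds to a polynomial $2$-form on $\mathbb C^5$ with homogeneous coefficients of degree $\Theta+1$ satisfying $i_R\eta=0$ and integrability, with singular set of codimension $\geq 2$. So the whole computation reduces to reading off the degree of the coefficients of $\eta_{[f,\mathcal G]}$ and subtracting $1$.

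For the degree count, set $\deg F_i = \nu/\alpha, \nu/\beta, \nu/\gamma, \nu/\delta$ respectively (using $\alpha\deg F_0 = \beta\deg F_1 = \gamma\deg F_2 = \delta\deg F_3 = \nu$). Then $dF_i$ has coefficients of degree $\deg F_i - 1$, and a term such as $\alpha\beta\, F_2 F_3\,(P_{23}\circ\tilde f)\, dF_0\wedge dF_1$ has coefficients of degree
\[
\deg F_2 + \deg F_3 + \deg(P_{23}\circ\tilde f) + (\deg F_0 - 1) + (\deg F_1 - 1).
\]
Since $P_{23}$ is homogeneous of degree $d-1$ and each component of $\tilde f$ is homogeneous of degree $\nu$ (because $F_i^{(\cdot)}$ has degree $\alpha\deg F_0 = \nu$, etc.), we get $\deg(P_{23}\circ\tilde f) = \nu(d-1)$. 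Hence the coefficient degree is
\[
\nu(d-1) + \sum_{i=0}^{3}\deg F_i - 2 = \nu(d-1) + \nu\Bigl(\tfrac1\alpha+\tfrac1\beta+\tfrac1\gamma+\tfrac1\delta\Bigr) - 2,
\]
and one checks the same value comes out for every one of the six terms (each term omits exactly one $F_i$ and correspondingly carries the $d\!F_i$ factors of the complementary pair, so the bookkeeping is symmetric). Therefore $\Theta+1 = \nu[(d-1)+\frac1\alpha+\frac1\beta+\frac1\gamma+\frac1\delta]-2$, giving $\Theta_{\nu,d}^{\alpha,\beta,\gamma,\delta} = \nu[(d-1)+\frac1\alpha+\frac1\beta+\frac1\gamma+\frac1\delta]-3$ as claimed. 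I would also note that $i_R\eta_{[f,\mathcal G]}=0$ follows formally from $i_R\Omega = 0$ by naturality of pull-back together with $d\tilde f(R_{\mathbb C^5})$ being a multiple of $R_{\mathbb C^4}$ along the image (Euler's relation, since the components of $\tilde f$ are homogeneous), and integrability is inherited from that of $\Omega$.

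The main obstacle is not the arithmetic but making sure that $\eta_{[f,\mathcal G]}$ is \emph{primitive} — that the six coefficient polynomials have no nonconstant common factor — and that its singular locus has codimension $\geq 2$, for only then is the degree of the represented foliation actually equal to $\deg(\text{coefficients}) - 1$ rather than something smaller. This is exactly where the genericity hypotheses on the pair $(f,\mathcal G)$ enter: since $f\in Gen^{\alpha,\beta,\gamma,\delta}(4,3,\nu)$ the indeterminacy locus $I(f)$ is finite, $f$ is a submersion off a proper subvariety, and $[Sing(\mathcal G)\cap D(f)]=\emptyset$ prevents the critical values of $f$ from meeting the singularities of $\mathcal G$; pulling back a form whose zero set has codimension $\geq 2$ under such a map again yields a form with singular set of codimension $\geq 2$, and the irreducibility of the $F_i$ together with $\mathcal G\in\mathcal M(d,3)$ (so the $P_{ij}$ have no spurious common factors forced on them) rules out a common factor among the coefficients. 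I would therefore state this verification as the substantive content — invoking that a generic pair produces a bona fide codimension $2$ foliation — and present the degree formula as its immediate numerical consequence.
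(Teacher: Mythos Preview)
Your proposal is correct and follows essentially the same approach as the paper: the paper simply observes, from the explicit expression \eqref{expressao homogenea}, that the coefficients of $\eta_{[f,\mathcal G]}$ are homogeneous of degree $\nu[(d-1)+\tfrac1\alpha+\tfrac1\beta+\tfrac1\gamma+\tfrac1\delta]-2$ and deduces the proposition immediately. Your write-up is in fact more careful than the paper's, since you spell out the term-by-term degree count and flag the need for primitivity of $\eta_{[f,\mathcal G]}$ and $\operatorname{codim}(\operatorname{sing})\ge 2$ under the generic-pair hypothesis, points the paper leaves implicit.
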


Set $\mathcal W=\{ \mathcal F, \mathcal F=f^\ast \mathcal G$ where $(f,\mathcal G)$ is a generic pair \} of generic pull-back foliations. We remark that it is a Zariski real open and dense subset of  $PB(\Theta_{\nu,d}^{\alpha,\beta,\gamma,\delta},2,4)$.

In the sequence we will describe the singular set of a generic pull-back foliation.
\subsection{Quasi-homogeneous singular set of $f^\ast \mathcal G$}\label{section5.2}
 Let us now describe  $\mathcal{F} = f^*(\mathcal{G})$ in a neighborhood of a point $p \in I(f).$

It is easy to show that there exists a local chart $(U,x)\subset (\mathbb C^4,0)$, $x=(x_0,...,x_{3})$  around $\tilde p\in \Pi_{4}^{-1}(p)$ such that the lifting $\tilde f$ of $f$ is of the form $\tilde f|_{U}=(x_0^\alpha,x_1^{\beta},x_2^{\gamma},x_{3}^{\delta}):U \to \mathbb{C}^{4}$. In particular the lifting $\mathcal {F}^\ast|_{U(p)}$ is represented by the quasi-homogeneous $2$-form 

\begin{multline}\label{eta}
\tilde\eta(x_0,...,x_{3})=\alpha\beta x_{2}x_{3}P_{23}(x_0^\alpha,x_1^\beta,x_2^\gamma,x_{3}^\delta)dx_{0}\wedge dx_{1}-\\\alpha\gamma x_{1}x_{3}P_{13}(x_0^\alpha,x_1^\beta,x_2^\gamma,x_{3}^\delta)dx_{0}\wedge dx_{2}+\alpha\delta x_{1}x_{2}P_{12}(x_0^\alpha,x_1^\beta,x_2^\gamma,x_{3}^\delta)dx_{0}\wedge dx_{3} +\\ \beta\gamma x_{0}x_{3}P_{03}(x_0^\alpha,x_1^\beta,x_2^\gamma,x_{3}^\delta)dx_{1}\wedge  dx_{2}-  \beta\delta x_{0}x_{2}P_{02}(x_0^\alpha,x_1^\beta,x_2^\gamma,x_{3}^\delta)dx_{1}\wedge dx_{3}+\\ \gamma\delta x_{0}x_{1}P_{01}(x_0^\alpha,x_1^\beta,x_2^\gamma,x_{3}^\delta)dx_{2}\wedge dx_{3}
\end{multline}
which is the pull-back via $\tilde f|_{U}$ of $\Omega$, the $2-$form defining the $1-$dimensional foliation on $\mathbb P^3$. According to \cite{ln3,ccgl} a $2$-form of this type will be called a $2$-dimensional quasi-homogeneous singularity of type $$(\beta\gamma\delta,\alpha\gamma\delta,\alpha\beta\delta,\alpha\beta\gamma;\alpha\beta\gamma\delta(d-1)).$$ In particular, note that $\eta$ is a quasi-homogeneous $2$-form invariant under the $\mathbb C^{*}$-action  
\begin{equation}
(x_0,x_1,x_2,x_3)\to(s^{\beta\gamma\delta}x_0,s^{\alpha\gamma\delta}x_1,s^{\alpha\beta\delta}x_2,s^{\alpha\beta\gamma}x_3).
\end{equation}

The point $0$ in $\mathbb C^4$ corresponding to $\eta=0$ will be denoted by $\mathcal C(\eta)$. It will be called the central point of quasi-homogeineity of  $\eta$. The union of all central points of quasi-homogeneity of the foliation $\mathcal F$ will be denoted by $\mathcal C(\mathcal F)$. These singularities were studied in \cite{ln3} and  according to \cite[Theorem 3, p. 653]{ln3}, $\mathcal C(\mathcal F)$ is stable under holomorphic perturbations in the following sense: if $\mathcal F_t$ is a deformation of $\mathcal F_0$ the set $\mathcal C(\mathcal F_t)$ of the central points of $\mathcal F_ t$ is a deformation of $\mathcal C(\mathcal F_0)$.  Since $\mathcal C(\mathcal F_0)=\{p_{1},\dots,p_{j},\dots,p_{\frac{\nu^4}{\alpha\beta\gamma\delta}}\}$ we may denote the deformation of $\mathcal C(\mathcal F_0)$ by $\mathcal C(\mathcal F_t)=\{p_{1}(t),\dots,p_{j}(t),\dots,p_{\frac{\nu^4}{\alpha\beta\gamma\delta}}(t)\}$.

\subsection{The Kupka set of $\mathcal{F} = f^*\mathcal{G}$}\label{section5.1}

 Now let $\mathcal{F} = f^*\mathcal{G}$ where $(f,\mathcal G)$ is a generic pair. Consider a point $\tau_i\in sing(\mathcal{G})$ and let us describe $\mathcal F$ in a neighborhood of $V_{\tau_i}=\overline{f^{-1}(\tau_i)}$. Observe first that  $V_{\tau_i}$ is an algebraic curve in $\mathbb P^{4}$ that contains $I(f)=\mathcal C(\mathcal F)$.  Such a curve is in fact  a complete intersection. To see this, suppose for instance, that $\Pi_3^{-1}(\tau_i)$ is the line $\ell=\{(T.x_0,T.x_1,T.x_2,T.x_3);T\in \mathbb C^{\ast}$, where $x_3\neq0$. Then $V_{\tau_i}$ is the complete intersection $V_{\tau_i}=\Pi_4(\{x_{3}F_{0}^{\alpha}-x_{0}F_{3}^{\delta}=x_{3}F_{1}^{\beta}-x_{1}F_{3}^{\delta}=x_{3}F_{2}^{\gamma}-x_{2}F_{3}^{\delta}=0\})$. Since $F$ is generic, it follows that the hypersurfaces   $\Pi_4(\{x_{3}F_{0}^{\alpha}-x_{0}F_{3}^{\delta}=0\})$, $\Pi_4(x_{3}F_{1}^{\beta}-x_{1}F_{3}^{\delta}=0\})$ and $\Pi_4(x_{3}F_{2}^{\gamma}-x_{2}F_{3}^{\delta}=0\})$ intersects transversely along $V_{\tau_i}$. 
 Let us take, for instance, the singularity $\tau_1=[0:0:0:1]$.  Now, fix $p\in V_{\tau_1}\backslash I(f)$. There exist local analytic coordinate systems $(U,(x_0,x_1,x_2,x_3)), U\subset\mathbb C^4$, and $(V,(y_1,y_2,y_3)), V\subset\mathbb C^{3}$, at $p$ and $\tau_1=f(p)$ respectively, such that  $f(x_0,x_1,x_2,x_3)=(x_0^{\alpha},x_1^{\beta},x_2^{\gamma})$, $y(\tau_1)=0$. Suppose that $\mathcal {G}$ is represented by the vector field 
 $Y=y_1(\lambda_1+h.o.t)\frac{\partial}{\partial y_1}+y_2(\lambda_2+h.o.t)\frac{\partial}{\partial y_2}+y_3(\lambda_3+h.o.t)\frac{\partial}{\partial y_3}$ in a neighborhood of $\tau_1$. Then $\mathcal F$ is represented by $f^\ast Y=x_0(\frac{\lambda_1}{\alpha}+h.o.t)\frac{\partial}{\partial x_0}+x_1(\frac{\lambda_2}{\beta}+h.o.t)\frac{\partial}{\partial x_1}+x_2(\frac{\gamma}{\lambda_3}+h.o.t)\frac{\partial}{\partial x_2}$. Note that $tr(D(f^\ast Y)(0))=\beta\gamma\lambda_1+\alpha\gamma\lambda_2+\alpha\beta\lambda_3\neq0$. It follows that in $U$, the foliation $\mathcal F$ is  biholomorphically equivalent to the product of two foliations of dimension one: the singular foliation induced by the vector field $f^\ast Y$ in $(\mathbb C^{3},0)$ and a regular foliation of dimension one. Therefore if $p$ is as before it belongs to the Kupka-set of $\mathcal {F}$ because $\mathcal G\in \mathcal M(d,3)$ and $Div(f^\ast Y(p))\neq0$. Note that this local product structure is stable under holomorphic deformations of $\mathcal F$ \cite[Th. A' p. 396]{medeiros}. For the other singularities the argument is analogous. 

\par Since $\mathcal G$ has degree $d$ and all of its singularities are non degenerate it has $N=d^3+d^2+d+1$ singularities, say, $\tau_1,...,\tau_N$. We will denote the curves $\overline{f^{-1}(\tau_1)},\dots,\overline{f^{-1}(\tau_N)}$ by $V_{\tau_1}, \dots,V_{\tau_N}$ respectively. For the other curves the argument is analogous. 
 We can summarise this discussion in the following:

\begin{proposition} For each $\{j=1,\dots,N\}$,  $V_{\tau_j}$ is a complete intersection of $3$ nontangential algebraic hypersurfaces. Furthermore,  $V_{\tau_j}\backslash I(f)$ is contained in the Kupka set of $\mathcal F=f^\ast\mathcal G$.
\end{proposition}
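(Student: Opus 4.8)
The plan is to prove the two assertions separately, in each case reducing to the local model already worked out above for the singularity $\tau_1=[0:0:0:1]$ and observing that nothing in that computation used anything special about $\tau_1$; the clause ``for the other singularities the argument is analogous'' will be made precise by choosing, for a general $\tau_j$, an appropriate coordinate to play the role of $x_3$.

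\emph{Complete intersection.} First I would fix $\tau_j=[a_0:a_1:a_2:a_3]$, pick an index $k$ with $a_k\neq0$, normalise $a_k=1$, and set $(e_0,e_1,e_2,e_3)=(\alpha,\beta,\gamma,\delta)$. The line $\ell=\Pi_3^{-1}(\tau_j)\cup\{0\}\subset\mathbb C^4$ is the common zero of the three linear forms $u_\ell-a_\ell u_k$, $\ell\neq k$; since all four products $e_i\deg F_i$ equal $\nu$, the lifting $\tilde f$ is homogeneous of degree $\nu$, so ($\ell$ being a cone)
\[
\tilde f^{-1}(\ell)=\{\,z\in\mathbb C^5:\ F_\ell^{e_\ell}(z)-a_\ell\,F_k^{e_k}(z)=0,\ \ell\in\{0,1,2,3\}\setminus\{k\}\,\}.
\]
Each polynomial $F_\ell^{e_\ell}-a_\ell F_k^{e_k}$ is non-zero (the $F_i$ are irreducible and pairwise coprime) of degree $\nu$. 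A point of their common zero set either has $\tilde f\neq0$, hence projects into $f^{-1}(\tau_j)$, or has $\tilde f=0$, hence projects into $I(f)\subset V_{\tau_j}$; conversely both $f^{-1}(\tau_j)$ and $I(f)$ lie in it. Therefore $V_{\tau_j}=S_{j,1}\cap S_{j,2}\cap S_{j,3}$ with $S_{j,m}=\Pi_4\{F_{\ell_m}^{e_{\ell_m}}-a_{\ell_m}F_k^{e_k}=0\}$, and since $f$ is generic its indeterminacy locus is finite and $V_{\tau_j}$ is a curve; three hypersurfaces in $\mathbb P^4$ cutting out a set of the expected codimension $3$ form a complete intersection (of degree $\nu^3$).

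\emph{Nontangentiality and the Kupka assertion.} Here I would work at a point $p\in V_{\tau_j}\setminus I(f)$ with $\tilde p\in\Pi_4^{-1}(p)$, so $\tilde f(\tilde p)\in\ell\setminus\{0\}$. The decisive input is the genericity of the pair: were $\tilde p$ a critical point of the map $(F_0,F_1,F_2,F_3)\colon\mathbb C^5\to\mathbb C^4$, one would get $\tau_j=[\tilde f(\tilde p)]\in D(f)$, contradicting $Sing(\mathcal G)\cap D(f)=\emptyset$; hence $dF_0\wedge dF_1\wedge dF_2\wedge dF_3(\tilde p)\neq0$. This makes the differentials of the three defining polynomials of $S_{j,1},S_{j,2},S_{j,3}$ independent modulo the Euler field at $\tilde p$, i.e.\ these hypersurfaces meet transversally along $V_{\tau_j}\setminus I(f)$; and it lets me normalise $f$ near $p$ as $f(x)=(x_0^{m_1},x_1^{m_2},x_2^{m_3})$ with $\tau_j=0$ (each $m_i$ being $1$ or the appropriate $e_i$ according as $\tau_j$ avoids or meets the corresponding coordinate plane). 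Writing $\mathcal G$ near $\tau_j$ as $Y=\sum_i y_i(\lambda_i+\text{h.o.t.})\partial_{y_i}$ (legitimate because $\mathcal G\in\mathcal M(d,3)\subset ND(d;1,3)$, so the $\lambda_i$ are non-zero), the pulled-back field $f^{\ast}Y=\sum_i x_{i-1}(\lambda_i/m_i+\text{h.o.t.})\partial_{x_{i-1}}$ has non-degenerate diagonal linear part at $0$ with trace $\sum_i\lambda_i/m_i\neq0$, as noted above; so in a neighbourhood of $p$ the foliation $\mathcal F$ is the product of the singular one-dimensional foliation of $f^{\ast}Y$ in $(\mathbb C^3,0)$ with a regular one-dimensional foliation, $d\tilde\eta(p)=\mathrm{div}(f^{\ast}Y)(0)\,dx_0\wedge dx_1\wedge dx_2\neq0$, and $p$ is a Kupka point of $\mathcal F$. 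Since $p$ was arbitrary, $V_{\tau_j}\setminus I(f)$ lies in the Kupka set.

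\emph{Main obstacle.} The delicate step is this last one when $\tau_j$ sits on one of the invariant coordinate planes --- which, by Theorem \ref{teoB}, happens for the singularities $p_{ijk}$, $p_{ij}$ and $p_i$. Then $V_{\tau_j}$ is forced into the ramification hypersurface of $f$, so $f$ is genuinely branched along $V_{\tau_j}$ and is nowhere an immersion on it; what rescues the argument is that $D(f)$ only records the critical image of the \emph{reduced} map $(F_0,F_1,F_2,F_3)$, so the generic-pair hypothesis remains in force at the points of $V_{\tau_j}\setminus I(f)$, and this is exactly what keeps both the transversality of the $S_{j,m}$ and the local product decomposition of $\mathcal F$ valid there (only the exponents $m_i$ in the local model change). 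A secondary point to handle is that $f^{-1}(\tau_j)$ is one-dimensional for \emph{every} $j$, not merely for a generic target point; this again follows from the genericity of $f$ (finiteness of $I(f)$), and it is what legitimates calling $S_{j,1}\cap S_{j,2}\cap S_{j,3}$ a complete intersection.
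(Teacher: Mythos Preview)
Your proposal is correct and follows essentially the same approach as the paper. The paper's ``proof'' is the discussion immediately preceding the proposition (ending with ``We can summarise this discussion in the following''), and your argument reproduces it: write $V_{\tau_j}$ as the common zero of three degree-$\nu$ hypersurfaces $F_\ell^{e_\ell}-a_\ell F_k^{e_k}$, use the generic-pair hypothesis to normalise $f$ locally as $(x_0^{m_1},x_1^{m_2},x_2^{m_3})$, pull back the vector field $Y$ representing $\mathcal G$ at $\tau_j$, and read off the non-vanishing divergence to get the Kupka property. Your ``main obstacle'' paragraph is a welcome elaboration of what the paper compresses into ``For the other singularities the argument is analogous'': you make explicit that when $\tau_j$ lies on one or more coordinate planes the branching exponents $m_i$ change, and that it is the reduced map $(F_0,F_1,F_2,F_3)$ --- not $\tilde f$ itself --- whose noncriticality is guaranteed by $Sing(\mathcal G)\cap D(f)=\emptyset$, which is exactly what one needs both for the local normal form and for the transversality of the (reduced) defining hypersurfaces.
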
   

\subsubsection{Deformations of the Kupka set of ${\mathcal F}_0= f_{0}^\ast \mathcal G_{0}$, where $(f_0,\mathcal G_ 0)$ is a generic pair.} 

We will state a Lemma which say that for any pull-back generic foliation $\mathcal{F}_{0}$ and any germ of deformations of foliations $(\mathcal{F}_{t})_{t \in (\mathbb{C},0)}$ such that $\mathcal{F}_{0}=\mathcal{F}_{t=0}$, $(\mathcal{F}_{t})_{t \in (\mathbb{C},0)}$ has a Kupka set with similar properties to that of $\mathcal{F}_{0}$ for all $t \in (\mathbb{C},0)$.  
  
\begin{Lemma} \label{subvariedades}
There exist $\epsilon >0$ and $C^{\infty}$ isotopies $\phi_{\tau_i}:D_{\epsilon}\times V_{\tau_i} \to \mathbb P^4,  \tau_i \in Sing(\mathcal G_{0})$, such that $V_{\tau_i}(t)=\phi_{\tau_i}(\{t\}\times V_{\tau_i})$ satisfies:
\begin{enumerate}

\item[(a)] $V_{\tau_i}(t)$ is an algebraic curve in $\mathbb P^4$ and  $V_{\tau_i}(0) = V_{\tau_i}$ for all $ \tau_i\in Sing(\mathcal G_{0})$ and for all $t \in D_{\epsilon}.$

\item[(b)]   $V_{\tau_i}(t) \backslash \mathcal C(\mathcal F_t)$ is contained in the Kupka-set of $\mathcal{F}_t$ for all $\tau_i \in Sing(\mathcal G_{0})$ and for all $t \in D_{\epsilon}.$ In particular, for fixed $t$, the transversal type of $\mathcal F_{t}$ is constant along $V_{\tau_i}(t) \backslash \mathcal C(\mathcal F_t)$.

\item[(c)] $\mathcal C(\mathcal F_t) \subset V_{\tau_i}(t)$ for all $\tau_i \in Sing(\mathcal G_{0})$ and for all $t \in D_{\epsilon} $. Moreover, if $\tau_i \neq \tau_j$, and $\tau_i, \tau_j \in Sing(\mathcal G_{0})$,  we have $V_{\tau_i}(t)  \cap V_{\tau_j}(t) = \mathcal C(\mathcal F_t)$ for all $t \in D_{\epsilon}$ and the intersection is nontangential.

\end{enumerate}
\end{Lemma}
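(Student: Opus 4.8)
The plan is to establish Lemma~\ref{subvariedades} by combining the local product structure of $\mathcal F_0$ along the Kupka set (already described in Section~\ref{section5.1}) with the stability results for Kupka phenomena and for the central points of quasi-homogeneity. First I would fix a generic pair $(f_0,\mathcal G_0)$, enumerate $Sing(\mathcal G_0)=\{\tau_1,\dots,\tau_N\}$ and recall that for each $i$ the curve $V_{\tau_i}=\overline{f_0^{-1}(\tau_i)}$ is a complete intersection of three nontangential hypersurfaces, that $\mathcal C(\mathcal F_0)=I(f_0)\subset V_{\tau_i}$, and that $V_{\tau_i}\cap V_{\tau_j}=I(f_0)=\mathcal C(\mathcal F_0)$ for $i\neq j$ (this is the statement of the Proposition preceding the Lemma, plus the fact that the fibers $\overline{f_0^{-1}(\tau_i)}$ only meet over the indeterminacy locus). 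On $V_{\tau_i}\setminus I(f_0)$ the foliation $\mathcal F_0$ has, at every point, a local product structure: a one-dimensional singular foliation in $(\mathbb C^3,0)$ with $Div\neq 0$ times a regular one-dimensional foliation, i.e.\ a genuine Kupka point; this is exactly what was verified in Section~\ref{section5.1}.

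The core of the argument is then a ``persistence of the Kupka set'' statement. By \cite[Th.~A' p.~396]{medeiros}, the local product structure at a Kupka point persists under holomorphic deformations: near each $p\in V_{\tau_i}\setminus I(f_0)$ there is, for small $t$, a germ of smooth curve $K_p(t)$, depending smoothly on $t$, along which $\mathcal F_t$ is Kupka with the same transversal type as at $p$, and $K_p(0)$ is the germ of $V_{\tau_i}$ at $p$. These local pieces glue: on overlaps the Kupka sets coincide because the Kupka set of $\mathcal F_t$ is intrinsically defined (it is the set where a representative vector field vanishes but its linear part has nonzero trace and rank $3$), so no choices are involved. Away from $I(f_0)$ this produces a smooth family of curves extending $V_{\tau_i}\setminus I(f_0)$. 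To close this up over $I(f_0)=\mathcal C(\mathcal F_0)$ I would invoke \cite[Theorem~3, p.~653]{ln3}: the set $\mathcal C(\mathcal F_t)$ of central points of quasi-homogeneity is a deformation of $\mathcal C(\mathcal F_0)$, so for $\epsilon$ small the $\frac{\nu^4}{\alpha\beta\gamma\delta}$ points $p_j(t)$ depend smoothly on $t$ and stay close to $p_j=p_j(0)$. One then patches the $C^\infty$ isotopy: near $I(f_0)$ use the isotopy carrying $p_j$ to $p_j(t)$ together with the persistence of the complete-intersection curve structure (the defining equations of $V_{\tau_i}$ deform with $\mathcal F_t$ because $\mathcal C(\mathcal F_t)\subset$ the deformed curve and a complete intersection of nontangential hypersurfaces is locally rigid as a smooth submanifold), and away from $I(f_0)$ use the Kupka isotopy; a partition of unity on $V_{\tau_i}$ merges the two into a single $C^\infty$ isotopy $\phi_{\tau_i}:D_\epsilon\times V_{\tau_i}\to\mathbb P^4$. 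Taking $\epsilon$ to be the minimum of the finitely many $\epsilon$'s obtained for the $N$ singularities gives a uniform radius.

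For part~(c) I would argue that the incidence $\mathcal C(\mathcal F_t)\subset V_{\tau_i}(t)$ and the equality $V_{\tau_i}(t)\cap V_{\tau_j}(t)=\mathcal C(\mathcal F_t)$ persist by a continuity/semicontinuity argument: both the intersection $V_{\tau_i}(t)\cap V_{\tau_j}(t)$ and the nontangentiality are Zariski-open conditions that hold at $t=0$, and shrinking $\epsilon$ preserves them; the inclusion $\mathcal C(\mathcal F_t)\supset V_{\tau_i}(t)\cap V_{\tau_j}(t)$ holds because a point outside $\mathcal C(\mathcal F_t)$ lying on two of these curves would be a Kupka point of $\mathcal F_t$ at which the transversal types coming from $V_{\tau_i}(t)$ and $V_{\tau_j}(t)$ disagree (they correspond to the distinct singularities $\tau_i$ and $\tau_j$ of $\mathcal G_0$ and this is an open condition, hence stable), and conversely $\mathcal C(\mathcal F_t)\subset V_{\tau_i}(t)$ for every $i$ by construction, giving $\mathcal C(\mathcal F_t)\subset\bigcap_i V_{\tau_i}(t)\subset V_{\tau_i}(t)\cap V_{\tau_j}(t)$.

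The main obstacle I expect is the behaviour near $I(f_0)=\mathcal C(\mathcal F_0)$: there the Kupka structure degenerates into the quasi-homogeneous singularity of type $(\beta\gamma\delta,\alpha\gamma\delta,\alpha\beta\delta,\alpha\beta\gamma;\alpha\beta\gamma\delta(d-1))$, so \cite{medeiros} does not directly apply and one must instead lean on \cite{ln3} together with the rigidity of the complete-intersection curve $V_{\tau_i}$, and then carefully check that the two isotopies (the Kupka one away from $\mathcal C(\mathcal F_0)$ and the central-point one near it) can be glued $C^\infty$-smoothly without destroying either the Kupka property in (b) or the intersection property in (c). Making this gluing precise — in particular choosing the transition region and the partition of unity so that the glued curve still contains $\mathcal C(\mathcal F_t)$ exactly and still meets the other $V_{\tau_j}(t)$ only there — is the delicate point; everything else is an application of the cited stability theorems.
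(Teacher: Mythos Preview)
Your approach is essentially the same as the paper's. The paper's own proof is a single sentence: ``The argument is similar to \cite[Lemma 2.3.3, p.83]{ln} and uses essentially the local stability under deformations of the Kupka set of $\mathcal F_0$ and also of $\mathcal C(\mathcal F_0)$,'' and you have correctly identified and fleshed out precisely these two ingredients --- Medeiros' stability of the Kupka local product \cite{medeiros} and the stability of the quasi-homogeneous central points from \cite{ln3} --- together with the gluing that the reference \cite{ln} carries out in the codimension-one setting.
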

\begin{proof} The argument is similar to \cite[Lemma 2.3.3, p.83]{ln} and uses essentially the local stability under deformations of the Kupka set of $\mathcal F_0$ and also of $\mathcal C(\mathcal F_0)$. \end{proof}

\section{Proof of Theorem \ref{teob}}
\subsection{Plan of the proof} \label{plano da prova}
To start with, $PB(\Theta_{\nu,d}^{\alpha,\beta,\gamma,\delta},2,4)$ is a unirational irreducible algebraic subset of $\mathbb{F}{\rm{ol}}\left(\Theta_{\nu,d}^{\alpha,\beta,\gamma,\delta},2,4\right)$, because it is the closure in $\mathbb{F}{\rm{ol}}\left(\Theta_{\nu,d}^{\alpha,\beta,\gamma,\delta},2,4\right)$ of the set \break $\{f^{\ast}\mathcal G|f\in RM^{\alpha,\beta,\gamma,\delta}\left(4,3,\nu\right), \mathcal G \in \mathcal L\mathcal V(d,3)\}$. Let $Z$ be the (unique)
 irreducible component of $\mathbb{F}{\rm{ol}}\left(\Theta_{\nu,d}^{\alpha,\beta,\gamma,\delta},2,4\right)$ containing $PB(\Theta_{\nu,d}^{\alpha,\beta,\gamma,\delta},2,4)$. Since $PB(\Theta_{\nu,d}^{\alpha,\beta,\gamma,\delta},2,4)$ and $Z$ are irreducible it is sufficient to prove that there exists $\mathcal F=f^\ast\mathcal G\in PB(\Theta_{\nu,d}^{\alpha,\beta,\gamma,\delta},2,4)$ such that for any germ of a holomorphic one parameter family $(\mathcal F_t)_{t \in D_{\epsilon}}$ of foliations $(\mathcal F_t)_{t \in D_{\epsilon}} \in Z$ with $\mathcal F_0=\mathcal F$,  $\mathcal F_t \in PB(\Theta_{\nu,d}^{\alpha,\beta,\gamma,\delta},2,4)$, for any $t\in D_{\epsilon}$.
 
  We choose $\mathcal F=f^\ast\mathcal G$, where $(f,\mathcal G)$ is a generic pair (see \S \ref{par generico}), and $\mathcal G\in \mathcal M(d,3)$ see (\S \ref{Generalized Lotka-Volterra foliations}). 
  
  Given the one parameter family $(\mathcal F_t)_{t \in D_{\epsilon}}$  with $\mathcal F_0=f_0^\ast\mathcal G_0$ we will construct in \S\ref{familia dos mapas} a one parameter family of generic maps, $(f_t)_{t \in D_{\epsilon}}$,  and in \S \ref{familia das folheacoes}  $(\mathcal G_t)_{t \in D_{\epsilon}}$  a family of foliations, such that $\mathcal F_t=f_t^\ast\mathcal G_t \in PB(\Theta_{\nu,d}^{\alpha,\beta,\gamma,\delta},2,4)$ for all ${t \in D_{\epsilon}}$. A problem with the families $(f_t)_{t \in D_{\epsilon}}$ and $(\mathcal G_t)_{t \in D_{\epsilon}}$ that we will construct is that we cannot assert a priori that $\mathcal F_t=f_t^\ast\mathcal G_t$ for any $t\in D_{\epsilon}$. This fact will be proved in \S \ref{ss:33}.
\subsection{Construction of the families $(f_{t})_{t \in D_{\epsilon}}$}\label{familia dos mapas} We will construct a family  of rational maps $f_{t}:{\mathbb{P}^4 \DashedArrow[->,densely dashed    ]   \mathbb{P}^{3}}$, $f_{t} \in Gen^{\alpha,\beta,\gamma,\delta}\left(4,3,\nu\right)$, such that $(f_{t})_{t \in D_{\epsilon}}$ is a deformation of $f_{0}$ and the algebraic curves $V_{\tau_i}(t)$ are fibers of $f_t$ for all $t$.  Initially we will build the families using two special curves and after we will prove, using some Lemmas that the deformations of the remaining curves are also fibers of  $(f_{t})_{t \in D_{\epsilon}}$.   
 Set $V_{a}=\overline{{f}_{0}^{-1}(a)}$ and $V_{b}=\overline{{f}_{0}^{-1}(b)}$, where $a=[0:1:0:0]$, $b=[1:0:0:0]$, and let by $V_{\tau^\ast}=\overline{{f}_{0}^{-1}(\tau^\ast)}$, where $\tau^\ast \in Sing(\mathcal G_{0})\backslash\{a,b\}$. Using the previous notation, from Lemma \ref{subvariedades} we get $V_{a}(t)$ and $V_{b}(t)$ for all $t \in D_{\epsilon}$. We will use them to define a family of rational maps  $({f}_{t})_{t \in D_{\epsilon}}$, a deformation of $f_{0}$ in $Gen^{\alpha,\beta,\gamma,\delta}\left(4,3,\nu\right)$.

  \begin{proposition}\label{recupmapas}
Let $(\mathcal{F}_{t})_{t \in D_{\epsilon}}$ be a deformation of $\mathcal F_{0}= f_{0}^*(\mathcal G_{0})$, where $(f_{0}, \mathcal G_{0})$ is a generic pair, with $\mathcal G_{0} \in \mathcal M(d,3)$, $f_0 \in Gen^{\alpha,\beta,\gamma,\delta}\left(4,3,\nu\right)$ and $deg(f_{0})=\nu\geq 2$. Then there exists a deformation $({f}_{t})_{t \in D_{\epsilon}}$ of $f_{0}$ in $Gen^{\alpha,\beta,\gamma,\delta}\left(4,3,\nu\right)$ such that:
\begin{enumerate}
\item[(i)]$V_{a}(t)$ and $V_{b}(t)$ are fibers of $({f}_{t})_{t \in D_{\epsilon}}$.
\item[(ii)] $\mathcal C(\mathcal F_t)=I(f_{t}), \forall {t \in D_{\epsilon}}$.
    \end{enumerate}
  \end{proposition}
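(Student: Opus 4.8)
The plan is to reconstruct the whole family of maps from just the two distinguished curves $V_a(t),V_b(t)$ produced by Lemma \ref{subvariedades}. At $t=0$ one has $V_a:=V_a(0)=\overline{f_0^{-1}([0{:}1{:}0{:}0])}=\Pi_4(\{F_0=F_2=F_3=0\})$ and $V_b:=V_b(0)=\overline{f_0^{-1}([1{:}0{:}0{:}0])}=\Pi_4(\{F_1=F_2=F_3=0\})$: smooth complete intersections of multidegrees $(\tfrac\nu\alpha,\tfrac\nu\gamma,\tfrac\nu\delta)$ and $(\tfrac\nu\beta,\tfrac\nu\gamma,\tfrac\nu\delta)$, both contained in the surface $S_0:=\Pi_4(\{F_2=F_3=0\})$, which is the $f_0$-preimage of the line $\{x_2=x_3=0\}$ (an invariant line of $\mathcal G_0$, so $S_0$ is $\mathcal F_0$-invariant). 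The first step is to observe that $V_a(t)$ and $V_b(t)$ stay complete intersections of the same multidegrees: complete intersection subschemes are unobstructed, so the complete-intersection loci are smooth locally closed subschemes of the relevant Hilbert schemes, and since the $C^\infty$ isotopies of Lemma \ref{subvariedades} keep the \emph{algebraic} curves $V_a(t),V_b(t)$ close to $V_a,V_b$, they remain in those loci for $|t|$ small. Being complete intersections they are arithmetically Cohen--Macaulay, so $h^0(\mathcal I_{V_a(t)}(k))$ and $h^0(\mathcal I_{V_b(t)}(k))$ are locally constant in $t$ for every $k$.

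Next I would extract homogeneous forms defining the sought-for map. The arithmetic hypotheses give $\tfrac\nu\alpha,\tfrac\nu\beta,\tfrac\nu\gamma>\tfrac\nu\delta$, so the homogeneous ideal of $V_a$ (resp.\ $V_b$) in degree $\tfrac\nu\delta$ is $\mathbb C\cdot F_3$; hence for small $t$ there is a unique hypersurface $\{D_t^a=0\}$ of degree $\tfrac\nu\delta$ through $V_a(t)$ and a unique $\{D_t^b=0\}$ through $V_b(t)$, both deforming $\{F_3=0\}$. \textbf{The main obstacle} is to prove $D_t^a=D_t^b$, i.e.\ that the deformed curves continue to lie on a common hypersurface of degree $\tfrac\nu\delta$; equivalently, that $h^0(\mathcal I_{V_a(t)\cup V_b(t)}(\tfrac\nu\delta))$ stays equal to its value $1$ at $t=0$. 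This is where the geometry of the foliation is needed: $\{F_3=0\}$ (equivalently $S_0$) is an $\mathcal F_0$-invariant algebraic set and $V_a(t),V_b(t)$ are deformations of Kupka curves lying on it, so one argues that a deformation of this invariant set stays invariant for $\mathcal F_t$ and must contain the deformed Kupka curves — and it is again the ordering/coprimality of $\alpha,\beta,\gamma,\delta$ that forces the relevant postulation. Granting $D_t:=D_t^a=D_t^b$, a hypersurface $\{C_t=0\}$ of degree $\tfrac\nu\gamma$ through $V_a(t)\cup V_b(t)$ is obtained the same way (unique up to adding a multiple of $D_t$, so that $S_t:=\Pi_4(\{C_t=D_t=0\})$ is well defined and deforms $S_0$). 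On $S_t$ the curves $V_a(t)$ and $V_b(t)$ are cut out by hypersurfaces $\{A_t=0\}$ and $\{B_t=0\}$ of degrees $\tfrac\nu\alpha$ and $\tfrac\nu\beta$, where $A_t$ (resp.\ $B_t$) deforms $F_0$ (resp.\ $F_1$) and is determined modulo $(C_t,D_t)$, the linear systems involved having constant dimension by the complete-intersection property.

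Finally, set $\tilde f_t:=(A_t^\alpha,B_t^\beta,C_t^\gamma,D_t^\delta)$ and let $f_t$ be its projectivisation. The conditions that $A_t,B_t,C_t,D_t$ be irreducible without common factor, that $\deg f_t=\nu$, and that $dA_t\wedge dB_t\wedge dC_t\wedge dD_t\neq0$ on $\tilde f_t^{-1}(0)\setminus\{0\}$ are open and hold at $t=0$, hence for $|t|$ small; therefore $f_t\in Gen^{\alpha,\beta,\gamma,\delta}(4,3,\nu)$ and $(f_t)_{t\in D_\epsilon}$ is a deformation of $f_0$. Property (i) is then immediate from the construction: $\overline{f_t^{-1}([0{:}1{:}0{:}0])}=\Pi_4(\{A_t=C_t=D_t=0\})=V_a(t)$ and $\overline{f_t^{-1}([1{:}0{:}0{:}0])}=\Pi_4(\{B_t=C_t=D_t=0\})=V_b(t)$. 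For (ii), $I(f_t)=\Pi_4(\{A_t=B_t=C_t=D_t=0\})=V_a(t)\cap V_b(t)$, which by Lemma \ref{subvariedades}(c) equals $\mathcal C(\mathcal F_t)$, the intersection consisting of the expected $\tfrac{\nu^4}{\alpha\beta\gamma\delta}$ reduced points by genericity. I expect the whole difficulty to be concentrated in controlling the postulation of the deformed curves and of their union $V_a(t)\cup V_b(t)$ — preventing a low-degree hypersurface through one of the two curves from failing to contain the other — which is precisely where the assumptions on $\alpha,\beta,\gamma,\delta$ are used.
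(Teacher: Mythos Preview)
Your overall architecture matches the paper's: deform $V_a,V_b$ as complete intersections (the paper invokes Sernesi's stability criterion, you invoke unobstructedness of complete intersections), extract homogeneous generators of the right degrees, and assemble $f_t$ from them. You also correctly pinpoint the crux: one must show that the low-degree hypersurfaces through $V_a(t)$ and through $V_b(t)$ can be taken to coincide.

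Where your proposal has a genuine gap is precisely at this crux. You suggest resolving $D_t^a=D_t^b$ by arguing that the $\mathcal F_0$-invariant hypersurface $\{F_3=0\}$ deforms to an $\mathcal F_t$-invariant one containing both $V_a(t)$ and $V_b(t)$. No such persistence result for invariant hypersurfaces is available here (and the paper never proves one); the stability input from Lemma \ref{subvariedades} concerns only the Kupka \emph{curves}, not the surfaces or hypersurfaces in which they sit. So as written this step is not justified.

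The paper's resolution is more elementary and bypasses any discussion of invariant hypersurfaces. Write $V_a(t)=\{F_0(t)=F_2(t)=F_3(t)=0\}$ and $V_b(t)=\{F_1(t)=\widehat F_2(t)=\widehat F_3(t)=0\}$, with $F_i(t),\widehat F_i(t)$ deforming $F_i$. Set $J(t):=\{F_0(t)=F_1(t)=F_2(t)=F_3(t)=0\}$, a reduced set of $\tfrac{\nu^4}{\alpha\beta\gamma\delta}$ points by B\'ezout. By Lemma \ref{subvariedades}(c), $V_a(t)\cap V_b(t)=\mathcal C(\mathcal F_t)$ also consists of $\tfrac{\nu^4}{\alpha\beta\gamma\delta}$ points; since $\mathcal C(\mathcal F_t)=V_a(t)\cap V_b(t)\subset V_a(t)\cap\{F_1(t)=0\}=J(t)$, the two sets coincide. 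Now $\widehat F_2(t),\widehat F_3(t)$ vanish on $J(t)$, so Noether's theorem gives $\widehat F_i(t)\in\langle F_0(t),F_1(t),F_2(t),F_3(t)\rangle$. Degree comparison (using $\tfrac\nu\delta<\tfrac\nu\gamma<\tfrac\nu\beta<\tfrac\nu\alpha$ in case (1)) forces $\widehat F_3(t)=F_3(t)$ up to scalar and $\widehat F_2(t)=F_2(t)+g(t)F_3(t)$, whence $V_b(t)=\{F_1(t)=F_2(t)=F_3(t)=0\}$. This is exactly the coincidence you wanted, and from here your endgame and the paper's agree. In short: replace your ``deformation of the invariant hypersurface'' paragraph by the point-count $+$ Noether argument above; the rest of your outline is fine.
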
  
  We will explain in detail the situation corresponding to the  case (1), that is, \break when  $1<\alpha<\beta<\gamma<\delta$ and $\alpha$, $\beta$, $\gamma$ and $\delta$ are pairwise relatively prime.   \\
\begin{proof} Let $\tilde f_{0}=(F^\alpha_{0},F^\beta_{1},F^\gamma_{2},F^\delta_{3}): \mathbb C^{5} \to\mathbb C^{4}$ be the homogeneous expression of $f_{0}$. Then $V_{a}$ and $V_{b}$ appear as the complete intersections $\{F_{0}=F_{2}=F_{3}=0\}$ and $\{F_{1}=F_{2}=F_{3}=0\}$ respectively. Hence $I(f_{0})=V_{a} \cap V_{b}$. Using Sernesi's stability criteria (see \cite[section 4.6, p.235-236]{Ser0}), it follows that $V_{a}(t)$ and $V_{b}(t)$ appear as the complete intersections, say $V_{a}(t)=\{F_{0}(t)=F_{2}(t)={F_{3}}(t)=0\}$ and  $V_{b}(t)=\{F_{1}(t)=\widehat{F_{2}}(t)=\widehat{F_{3}}(t)=0\}$ where $F_{i}(t)$ for $0\leq i\leq3$ and $\widehat{F_{i}}(t)$ for $2\leq i\leq3$ are holomorphic deformations of $F_{i}$ and $ D_{\epsilon}$ is a possibly smaller neighborhood of $0$. Let us find polynomials $P_{i}(t)$ for $0\leq i\leq3$ such that $V_{a}(t)=\{P_{0}(t)=P_{2}(t)=P_{3}(t)=0\}$, $V_{b}(t)=\{P_{1}(t)=P_{2}(t)=P_{3}(t)=0\}$. Observe first that since each ${F_{i}}(t)$ is near $F_{i}$ for $0\leq i\leq3$ respectively, they meet as a regular complete intersection at $J(t)=\{\bigcap_{i=0}^{i=3}F_{i}(t)=0\}= V_a(t) \cap \{F_1(t)=0\}.$ Analogously each $\widehat{F_{i}}(t)$ is near $F_{i}$ for $2\leq i\leq3$.
Hence $J(t) \cap \{\hat F_{2}(t)=\hat F_{3}(t)=0\}=V_{b}(t)\cap V_{a}(t)=\mathcal C(\mathcal F_t)$, which implies that $\mathcal C(\mathcal F_t)\subset J(t).$  Once $\mathcal C(\mathcal F_t)$ and $J(t)$ have $\frac{\nu^{4}}{\alpha \beta \gamma\delta}$ points, we have that  $\mathcal C(\mathcal F_t)=J(t)$ for all $t \in D_{\epsilon}$. Using Noether's Theorem (see \cite{cln1}, \cite[p.86]{ln}) and the fact that all polynomials involved are homogeneous, we have $\widehat F_{i}(t)$ for $2\leq i\leq3$ $\in$ $<F_{0}(t),F_{1}(t),F_{2}(t),F_{3}(t)>$. Since  $\widehat {F_{3}}(t)$ has the lowest degree, we can assume that $\widehat {F_{3}}(t)=F_{3}(t)$. Since $deg\{F_{2}(t)\}>deg\{F_{3}(t)\}$ we also have $\hat F_{2}(t)=F_{2}(t)+g_1(t)F_{3}(t)$, where $g_1(t)$ is a homogeneous polynomial of degree $deg\{F_{2}(t)\}-deg\{F_{3}(t)\}$.
Moreover, note that $V_{b}(t)=V\{F_{1}(t),\hat{F_{2}}(t),\hat{F_{3}}(t)\}=V\{F_{1}(t),F_{2}(t)+g_1(t)F_{3}(t),{F_{3}}(t)\}=V\{F_{1}(t),{F_{2}}(t),{F_{3}}(t)\}=0$, where $V\{H_{1},H_{2},H_{3}\}$ denotes the projective algebraic curve defined by $\{H_{1}=H_{2}=H_{3}=0\}$. Hence we can define $f_{t}=\{P_{0}^{\alpha}(t),P_{1}^{\beta}(t),P_{2}^{\gamma}(t),P_{3}^{\delta}(t)\}$ where  $P_{i}(t)=F_{i}(t)$. This defines a  family of mappings $({f}_{t})_{t \in D_{\epsilon}}:\mathbb P^{4} \DashedArrow[->,densely dashed    ]\mathbb P^{3}$, and $V_{a}(t)$ and $V_{b}(t)$ are fibers of ${f}_{t}$ for fixed $t$. Observe that, for ${\epsilon}$ sufficiently small, $({f}_{t})_{t \in D_{\epsilon}}$ is generic in the sense of definition \ref{generic}, and its indeterminacy locus $I({f}_{t})$ is precisely $\mathcal C(\mathcal F_t)$. Furthermore, since $Gen^{\alpha,\beta,\gamma,\delta}\left(4,3,\nu\right)$ is open, we can suppose that this family $({f}_{t})_{t \in D_{\epsilon}}$ is in it.\end{proof}
 \begin{remark} {\rm{Proposition \ref{recupmapas}}} can be adapted with minor modifications to the cases $(2), (3)$ and $(4)$ respectively. \end{remark}
To finish, we have to prove that $V_{\tau^\ast}(t)$, where $\tau^\ast \in Sing(\mathcal G_{0})\backslash\{a,b\}$ are also fibers of $({f}_{t})_{t \in D_{\epsilon}}$. This will be done with the help of two auxiliary Lemmas.

 In the local coordinates $X(t)=(x_{0}(t),x_{1}(t),x_{2}(t),x_{3}(t))$ near some point of $\mathcal C(\mathcal F_t)$ the local expression of the polynomials $P_{i}(t)$ $i=0\leq i\leq3$ that are components of the map ${f}_{t}$ can be written as $P_{i}(t)=u_{it}x_{0}(t)+[\Pi_{j\neq i}x_{j}(t)].h_{it}$ where $u_{it} \in \mathcal O^{*}(\mathbb C^4,0)$ and $h_{it} \in \mathcal O(\mathbb C^4,0)$. Note that for any $i$, $\lim_{t\to 0}h_{it}=0$. 
 We want to show that an orbit of the $\mathbb C^{\ast}$-action
\begin{equation}\label{orbita1}
 (x_0,x_1,x_2,x_3)\to(s^{\beta\gamma\delta}x_0,s^{\alpha\gamma\delta}x_1,s^{\alpha\beta\delta}x_2,s^{\alpha\beta\gamma}x_3)
 \end{equation} that extends globally as a singular curve of the foliation $\mathcal F_{t}$ is a fiber of ${f}_{t}$. An orbit that is not contained in any coordinate plane will called generic orbit.
Recall that the condition $\alpha<\beta<\gamma<\delta$ implies that $\beta(\gamma\delta+\alpha\delta+\alpha\gamma)>\alpha\gamma\delta$ and $\delta(\beta\gamma+\alpha\gamma+\alpha\beta)>\alpha\beta\gamma$.
\begin{Lemma}\label{casomenor} If 
 $\alpha(\gamma\delta+\beta\delta+\beta\gamma)\geq\beta\gamma\delta$  and 
 $\gamma(\beta\delta+\alpha\delta+\alpha\beta)\geq\alpha\beta\delta$, 
 then any generic orbit of the $\mathbb C^{\ast}$-action given by the expression \ref{orbita1} that extends globally as a singular curve of the foliation $\mathcal F_{t}$ is also a fiber of $f_{t}$ for fixed $t$.
\end{Lemma}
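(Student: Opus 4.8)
The plan is to fix a generic orbit $C$ of the $\mathbb{C}^*$-action \eqref{orbita1} that extends as a singular curve of $\mathcal{F}_t$, and to show that all four components $P_0(t),P_1(t),P_2(t),P_3(t)$ of $f_t$ are constant on $C$ up to a common scalar factor, so that $f_t$ contracts $C$ to a point. Work in the local coordinates $X(t)=(x_0(t),\dots,x_3(t))$ near a point of $\mathcal{C}(\mathcal{F}_t)$, where $C$ is parametrized by $s\mapsto(s^{\beta\gamma\delta}x_0,s^{\alpha\gamma\delta}x_1,s^{\alpha\beta\delta}x_2,s^{\alpha\beta\gamma}x_3)$ and each $P_i(t)=u_{it}x_0(t)+[\Pi_{j\neq i}x_j(t)]\,h_{it}$ with $u_{it}$ a unit and $h_{it}\to 0$ as $t\to 0$. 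The point of the weight inequalities is to compare the weighted order along $C$ of the leading monomial $u_{it}x_0$ of $P_0$ (weight $\beta\gamma\delta$) with the weighted order of the "tail" $[x_1x_2x_3]h_{0t}$ (weight $\alpha\gamma\delta+\alpha\beta\delta+\alpha\beta\gamma$ plus a nonnegative contribution from $h_{0t}$), and similarly for $P_2$, whose leading term $x_2$ has weight $\alpha\beta\delta$ and whose tail $[x_0x_1x_3]h_{2t}$ has weight $\beta\gamma\delta+\alpha\gamma\delta+\alpha\beta\gamma$.

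The key steps, in order: (1) Restrict $f_t$ to $C$ and expand each $P_i(t)|_C$ as a Laurent series in the parameter $s$; the lowest-weight term of $P_0(t)|_C$ is $u_{0t}(0)\,x_0 s^{\beta\gamma\delta}$, and the hypothesis $\alpha(\gamma\delta+\beta\delta+\beta\gamma)\geq\beta\gamma\delta$ guarantees this genuinely dominates the contribution of the tail (which has $s$-order at least $\alpha(\gamma\delta+\beta\delta+\beta\gamma)\geq\beta\gamma\delta$), so $P_0(t)|_C=c_0 s^{\beta\gamma\delta}(1+o(1))$ with $c_0\neq 0$; symmetrically, using $\gamma(\beta\delta+\alpha\delta+\alpha\beta)\geq\alpha\beta\delta$, we get $P_2(t)|_C=c_2 s^{\alpha\beta\delta}(1+o(1))$ with $c_2\neq 0$. (2) For $P_1$ and $P_3$ the relevant inequalities $\beta(\gamma\delta+\alpha\delta+\alpha\gamma)>\alpha\gamma\delta$ and $\delta(\beta\gamma+\alpha\gamma+\alpha\beta)>\alpha\beta\gamma$ are automatic from $\alpha<\beta<\gamma<\delta$ (as recorded just before the Lemma), so $P_1(t)|_C=c_1 s^{\alpha\gamma\delta}(1+o(1))$ and $P_3(t)|_C=c_3 s^{\alpha\beta\gamma}(1+o(1))$ with $c_1,c_3\neq0$. (3) Consequently $\tilde f_t|_C=(P_0^\alpha,P_1^\beta,P_2^\gamma,P_3^\delta)|_C$ has all four entries of the same weighted $s$-order $\alpha\beta\gamma\delta$, hence $f_t(C)$ is a single well-defined point of $\mathbb{P}^3$; since $C$ is not contained in any coordinate hyperplane this point lies in the locus where $f_t$ is genuinely $\nu$-to-one onto its image, and $C$ has the right degree to be a full fiber. (4) Finally, observe that this argument is local around each point of $\mathcal{C}(\mathcal{F}_t)$ and that the global closure of $C$ is an algebraic curve invariant by $\mathcal{F}_t$; invoke Proposition \ref{recupmapas}(ii), $I(f_t)=\mathcal{C}(\mathcal{F}_t)\subset \overline{C}$, to glue the local picture and conclude $\overline{C}=\overline{f_t^{-1}(f_t(\text{pt}))}$ is a fiber of $f_t$.

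I expect the main obstacle to be step (3)–(4): namely, upgrading "$f_t$ has constant value along $C$ in a punctured neighborhood of each central point" to "$\overline{C}$ is an entire fiber of $f_t$" rather than merely a component of a fiber or a curve mapped multiple-to-one. This requires a degree/Bézout count comparing $\deg\overline{C}$ with $\deg f_t=\nu$ and the known structure of the fibers of the generic map $f_t$ through $I(f_t)$, together with the fact (from Lemma \ref{subvariedades}) that $\overline{C}$ must be one of the deformed curves $V_{\tau_i}(t)$ since it passes through all of $\mathcal{C}(\mathcal{F}_t)$ and is $\mathcal{F}_t$-invariant. A secondary delicate point is making the weighted-order estimates uniform in $t$ near $0$, which is where the fact that $h_{it}\to 0$ as $t\to 0$ — so that the leading term strictly dominates for $\epsilon$ small — is used to turn the non-strict inequalities in the hypothesis into a strict domination after shrinking $D_\epsilon$.
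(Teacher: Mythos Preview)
Your local analysis in steps (1)--(2) is essentially what the paper does: writing $P_i(t)$ near a central point as $u_{it}x_i + [\prod_{j\neq i}x_j]h_{it}$ and evaluating on the orbit $\psi(s)=(s^{\beta\gamma\delta},s^{\alpha\gamma\delta},s^{\alpha\beta\delta},s^{\alpha\beta\gamma})$, the two hypotheses (together with the automatic inequalities for $P_1,P_3$) let one factor $s^{\alpha\beta\gamma\delta}$ out of each $P_i^{\,\bullet}$ and obtain
\[
f_t(\psi(s))=[(u_0+s^kh_0)^\alpha:(u_1+s^lh_1)^\beta:(u_2+s^mh_2)^\gamma:(u_3+s^nh_3)^\delta]
\]
with all four entries nonvanishing at $s=0$.

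The genuine gap is in step (3). From ``all four entries have the same weighted $s$-order $\alpha\beta\gamma\delta$'' you conclude ``$f_t(C)$ is a single point of $\mathbb P^3$.'' This does not follow: equality of leading orders only means that the projective map \emph{extends holomorphically across} $s=0$; the $(1+o(1))$ factors still depend on $s$, so $f_t\circ\psi$ is a priori a nonconstant holomorphic curve in $\mathbb P^3$. (Trivial model: $s\mapsto[s:s(1+s)]=[1:1+s]$.) Your suggested repair via a degree/B\'ezout count and identification with some $V_{\tau_i}(t)$ is not carried out and would in any case be circular, since the point of the Lemma is precisely to establish that these deformed curves are fibers.

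The paper closes this gap with a global argument you are missing. The local computation above is used only to guarantee that near each $p_j(t)$ the four homogeneous coordinates of $f_t|_{V_\tau(t)}$ do not vanish. Away from $\bigcup_j B_j(p_j(t))$ the curve is compact; since $V_\tau(0)$ \emph{is} a fiber of $f_0$, one has $f_0(V_\tau(0))=[d:e:f:g]$ with $defg\neq0$, and continuity in $t$ keeps $f_t$ of the compact part off the coordinate hyperplanes for $t$ small. Hence the entire image $f_t(V_\tau(t))\subset\mathbb P^3$ avoids all four coordinate $2$-planes. But any curve in $\mathbb P^3$ meets every $2$-plane and any surface meets every line; therefore $f_t(V_\tau(t))$ can be neither, and must be a single point. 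This dimension/intersection argument---not an $s$-order computation---is what forces constancy, and it uses essentially that the situation at $t=0$ is already a fiber.
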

In order to simplify we will omit the parameter $t$.
\begin{proof}  A generic orbit $\psi(s)$ can be parametrized as \break$s\to(m_0s^{\beta\gamma\delta},m_1s^{\alpha\gamma\delta},m_2s^{\alpha\beta\delta},m_3s^{\alpha\beta\gamma})$; $m_0m_1m_2m_3\neq0$. Without loss of generality, we can suppose that $m_0=m_1=m_2=m_3=1$. We have $f_{t}(\psi(s))=[(s^{\beta\gamma\delta}u_{0}+s^{\alpha(\gamma\delta+\beta\delta+\beta\gamma)}h_{0})^\alpha:(s^{\alpha\gamma\delta}u_{1}+s^{\beta(\gamma\delta+\alpha\delta+\alpha\gamma)}h_{1})^{\beta}:(s^{\alpha\beta\delta}u_{2}+s^{\gamma(\beta\delta+\alpha\delta+\alpha\beta)}h_{2})^\gamma:(s^{\alpha\beta\gamma}u_{3}+s^{\delta(\beta\gamma+\alpha\beta+\alpha\beta)}h_{3})^\delta].$ The conditions $\alpha(\gamma\delta+\beta\delta+\beta\gamma)\geq\beta\gamma\delta$  and 
 $\gamma(\beta\delta+\alpha\delta+\alpha\beta)\geq\alpha\beta\delta$ enables us to extract  $s^{\alpha\beta\gamma\delta}$ from $f_{t}(\psi(s))$.
Hence we obtain \begin{equation}\label{eq4}f_{t}(\psi(s))=[(u_{0}+s^{k}h_{0})^\alpha:(u_{1}+s^{l}h_{1})^{\beta}:(u_{2}+s^{m}h_{2})^\gamma:(u_{3}+s^{n}h_{3})^\delta]\end{equation}
where $k={\alpha(\gamma\delta+\beta\delta+\beta\gamma)}-{\beta\gamma\delta}$, $l={\beta(\gamma\delta+\alpha\delta+\alpha\gamma)}-{\alpha\gamma\delta}$, \break$m={\gamma(\beta\delta+\alpha\gamma+\alpha\beta)}-{\alpha\beta\delta}$ and $n={\delta(\beta\gamma+\alpha\gamma+\alpha\beta)}-{\alpha\beta\gamma}$.
Since $V_{\tau}$ is a fiber of $f$, $f_{0}(V_{\tau})=[d:e:f:g]\in \mathbb P^3$ with $d.e.f.g\neq0$. If we take a covering of $I(f)=\{p_{1},\dots,p_{j},\dots,p_{\frac{\nu^4}{\alpha\beta\gamma\delta}}\}$ by small open balls $B_{j}(p_{j})$, $1\leq j\leq\frac{\nu^4}{\alpha\beta\gamma\delta}$, the set $V_{\tau}\backslash\cup_{j}B_{j}(p_{j})$ is compact. For a small deformation $f_{t}$ of $f_{0}$ we have that $f_{t}[V_{\tau}(t)\backslash\cup_{j}B_{j}(p_{j})(t)]$ stays near $f[V_{\tau}\backslash\cup_{j}B_{j}(p_{j})]$. Hence for $t$ sufficiently small the components of expression \ref{eq4} do not vanish neither inside nor outside of the neighborhood $\cup_{j}B_{j}(p_{j})(t)$. This is possible only if $f_{t}$ is constant along these curves. 

In fact, $f_{t}(V_{\tau}(t))$ is either a surface, a curve or a point. If it is a surface then it cuts all lines of $\mathbb P^3$ and therefore the components should be zero somewhere. Similarly if it is a curve then it cuts all 2-planes of $\mathbb P^3$ and therefore the components should be zero somewhere. Hence $f_{t}(V_{\tau}(t))$ is constant and we conclude that $V_{\tau}(t)$ is a fiber.  
 \end{proof}
 
 We would like to note that repeating the previous argument we can also conclude that the orbits $\psi_1(s)=(0,0,0,s)$ and $\psi_2(s)=(0,s,0,0)$ that correspond to the intersection $(x_0(t)=x_1(t)=x_2(t)=0)$ and $(x_0(t)=x_2(t)=x_3(t)=0)$
 when they are extended globally as curves in $\mathbb P^4$ are also fibers of $f_t$ for fixed $t.$ 
 
 When $\alpha(\gamma\delta+\beta\delta+\beta\gamma)<\beta\gamma\delta$  and 
 $\gamma(\beta\delta+\alpha\delta+\alpha\beta)<\alpha\beta\delta$
 the situation requires more detail. 
 For our strategy to work we need to write expressions of $P_0(t)$ and $P_2(t)$ in such a way that when we evaluate $f_t$ over a generic orbit  that extends globally as a singular curve of the foliation $\mathcal F_t$ we can repeat the situation as in Lemma \ref{casomenor}. 

 First of all we will suppose that the orbits which are contained in the coordinate planes that extend globally as singular curves of the foliation $\mathcal F_{t}$ are fibers of $f_{t}$. Using this fact we will work with two such orbits, the first contained on the plane $x_{0}(t)=0$ and the second contained on the plane $x_{2}(t)=0$.
 
 The orbit contained in the plane $x_{0}(t)=0$ can be written as $(x_{0}=x_{1}^\beta-c_0x_{2}^\gamma=x_{1}^\beta-c_1x_{3}^\delta=0)$ and analogously the orbit contained in the plane $x_2(t)=0$ can be written as $(x_{2}=x_{0}^\alpha-c_2x_{1}^\beta=x_{0}^\alpha-c_3x_{3}^\delta=0)$. We have that the germ of $P_{0}(t)$ at the point $p_{j}(t)\in\mathcal C(\mathcal F_t)$ belongs to the ideal generated by $x_{0}(t)$, $(x_{1}^\beta-c_0x_{2}^\gamma)(t)$ and $(x_{1}^\beta-c_1x_{3}^\delta)(t)$. Hence we can write the function $h_{0t}$ from the expression $P_{0}(t)=u_{0t}x_{0}(t)+x_{1}(t)x_{2}(t)x_{3}(t)h_{0t}$ as
$$h_{0t}=x_{0}(t)h_{01t}+(x_{1}^\beta(t)-c_0x_{2}^\gamma(t))h_{02t}+(x_{1}^\beta-c_1x_{3}^\delta(t))h_{03t}$$
where $h_{j1t},  \in \mathcal O^{*}(\mathbb C^4,0)$, where $1\leq j\leq 3$.  In the same way we have that the germ of $P_{2}(t)$ at the point $p_{j}(t)\in\mathcal C(\mathcal F_t)$ belongs to the ideal generated by $x_{2}(t)$, $(x_{0}^\alpha-c_2x_{1}^\beta)(t)$ and $(x_{0}^\alpha-c_3x_{3}^\delta)(t)$. Hence we can write the function $h_{2t}$ from the expression $P_{2}(t)=u_{2t}x_{2}(t)+x_{0}(t)x_{1}(t)x_{3}(t)h_{2t}$ as
$$h_{2t}=x_{2}(t)h_{21t}+(x_{0}^\alpha-c_2x_{1}^\beta)(t))h_{22t}+(x_{0}^\alpha-c_3x_{3}^\delta)(t))h_{23t}$$
where $h_{j2t},  \in \mathcal O^{*}(\mathbb C^4,0)$, where $1\leq j\leq 3$. In this way,  we can repeat the argument of Lemma \ref{casomenor} and extract the factor $s^{\alpha\beta\gamma\delta}$ and the result follows. Let us now prove that the orbits contained in the coordinate planes that extend globally as singular curves of the foliation $\mathcal F_{t}$ are fibers of $f_{t}$.  
\begin{Lemma}  \label{casomaior}  If (i) $\alpha(\gamma\delta+\beta\delta+\beta\gamma)<\beta\gamma\delta$  and 
 (ii) $\gamma(\beta\delta+\alpha\delta+\alpha\beta)<\alpha\beta\delta$
  then any orbit of the $\mathbb C^{\ast}$-action given by the expression \ref{orbita1} which is contained in the coordinate planes $x_0(t)=0$ and $x_2(t)=0$ at $p_{j}(t)\in\mathcal C(\mathcal F_t)$ and that extends globally as a singular curve of the foliation $\mathcal F_{t}$ is a fiber of the mapping $f_{t}$ for fixed $t$. 
\end{Lemma}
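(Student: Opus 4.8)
The plan is to argue exactly as in the endgame of the proof of Lemma \ref{casomenor}: to see that the global extension of such an orbit is a fiber of $f_t$ it suffices to exhibit \emph{one} of the four components of $f_t$ that has no zero on that curve away from $I(f_t)$, since then the image of the curve in $\mathbb P^3$ avoids a hyperplane and so can be neither a surface (which cuts every line) nor a curve (which cuts every $2$-plane); hence it is a single point and $f_t$ is constant along the curve.

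So fix an orbit $\gamma$ of the action \ref{orbita1} contained in the coordinate plane $(x_0(t)=0)$ through a point $p_j(t)\in\mathcal C(\mathcal F_t)=I(f_t)$ and extending globally to a singular algebraic curve $\bar\gamma(t)$ of $\mathcal F_t$; the case $(x_2(t)=0)$ is entirely symmetric, with the indices permuted. Near $p_j(t)$ write $\gamma$ as $s\mapsto(0,m_1s^{\alpha\gamma\delta},m_2s^{\alpha\beta\delta},m_3s^{\alpha\beta\gamma})$. For $t=0$, since $\gamma$ already lies in $Sing(\mathcal F_0)$ near $p_j$ and there $\tilde f_0=(x_0^{\alpha},x_1^{\beta},x_2^{\gamma},x_3^{\delta})$ carries $\gamma$ to the constant $[0:m_1^{\beta}:m_2^{\gamma}:m_3^{\delta}]$, the point $\tau^\ast:=[0:m_1^{\beta}:m_2^{\gamma}:m_3^{\delta}]$ is forced to be a singular point of $\mathcal G_0$ on the plane $(y_0=0)$ and $\bar\gamma(0)=\overline{f_0^{-1}(\tau^\ast)}=V_{\tau^\ast}$ is a fiber of $f_0$; by Lemma \ref{subvariedades}, $\bar\gamma(t)=V_{\tau^\ast}(t)$ is an irreducible algebraic curve deforming $V_{\tau^\ast}$, containing every point of $\mathcal C(\mathcal F_t)$ and coinciding near $p_j(t)$ with the orbit $\gamma$.

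The local point is that on the germ $(x_0(t)=0)$ the correction terms of the components $P_1(t),P_2(t),P_3(t)$ of $f_t$ are divisible by $x_0(t)$ and therefore vanish identically, whence $P_i(t)|_{(x_0(t)=0)}=u_{it}\,x_i(t)$ with $u_{it}\in\mathcal O^{*}$, for $i=1,2,3$. Evaluating on $\gamma$, each $P_i(\gamma(s))$ equals $u_{it}(\gamma(s))$ times $m_i$ times a positive power of $s$, hence is nonzero for $0<|s|$ small. Choose $k\in\{1,2,3\}$ with the $k$-th homogeneous coordinate of $\tau^\ast$ nonzero (possible, as $\tau^\ast\neq0$). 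On $\bar\gamma(0)\setminus I(f_0)$ one has $f_0\equiv\tau^\ast\notin(y_k=0)$, so $P_k$ has no zero there; now cover $I(f_t)$ by small balls $B_j$: the set $\bar\gamma(t)\setminus\bigcup_j B_j$ stays close to the compact set $\bar\gamma(0)\setminus\bigcup_j B_j$ on which $P_k$ is nonvanishing, while inside each $B_j$ the curve is the orbit $\gamma$, on which $P_k=u_{kt}x_k$ vanishes only at the point $p_j(t)\in I(f_t)$. Therefore, for $t$ small, $P_k(t)$ has no zero on $\bar\gamma(t)\setminus I(f_t)$, and the image $f_t(\bar\gamma(t)\setminus I(f_t))$ is an irreducible subvariety of $\mathbb P^3$ of dimension at most one that misses the $2$-plane $(y_k=0)$; by the dichotomy above it is a single point, so $f_t$ is constant on $\bar\gamma(t)$ and $\bar\gamma(t)$ is a fiber of $f_t$.

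I expect the genuine obstacle not to be this bookkeeping but the structural input used tacitly above and imported from \S\ref{section5.2}, \S\ref{section5.1} and Lemma \ref{subvariedades}: the stability of the central points, the persistence of a quasi-homogeneous singularity of the prescribed type at each $p_j(t)$, and — crucially — the fact that $\bar\gamma(t)$ is, near $p_j(t)$, an \emph{orbit} of the action \ref{orbita1} contained in $(x_0(t)=0)$, which is precisely what legitimizes the computation of $P_i(\gamma(s))$ and is where the arithmetic hypotheses on $\alpha,\beta,\gamma,\delta$ and the weighted blow-ups of these quasi-homogeneous singularities come into play. Once Lemma \ref{casomaior} is established, the functions $h_{0t}$ and $h_{2t}$ lie in the ideals generated by the defining equations of these orbits, which is exactly the rewriting of $P_0(t)$ and $P_2(t)$ invoked just before this lemma to deal with the remaining generic orbits, and thereby to complete the proof that every $V_{\tau_j}(t)$ is a fiber of $f_t$.
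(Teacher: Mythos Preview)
Your approach differs substantially from the paper's and, as written, has a gap. Notice first that you never use hypothesis (i), which is exactly what separates this lemma from Lemma \ref{casomenor}. The concrete problem is the sentence ``inside each $B_j$ the curve is the orbit $\gamma$, on which $P_k=u_{kt}x_k$'': this assumes the local branch of $\bar\gamma(t)$ lies in $(x_0(t)=0)$ at \emph{every} central point $p_l(t)$, whereas the hypothesis of the lemma places it there only at the \emph{one} point $p_j(t)$. None of the inputs you invoke (\S\ref{section5.2}, \S\ref{section5.1}, Lemma \ref{subvariedades}) supply this; the invariance of the coordinate planes under the divisor foliations $\mathcal G_t^l$ is established only later, in \S\ref{familia das folheacoes}. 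If at some $p_l(t)$ the local branch has drifted off $(x_0=0)$ (so $m_0\ne0$ there), the correction $x_0x_{k'}x_{k''}h_{kt}$ in $P_k$ survives; under (ii) this term carries strictly \emph{lower} weight than $x_2$, so for $k=2$ the function $P_2$ can acquire zeros on that branch away from $p_l(t)$ and your nonvanishing fails. (Restricting to $k\in\{1,3\}$ and invoking the unconditional inequalities $\beta(\gamma\delta+\alpha\delta+\alpha\gamma)>\alpha\gamma\delta$ and $\delta(\beta\gamma+\alpha\gamma+\alpha\beta)>\alpha\beta\gamma$ recorded just before Lemma \ref{casomenor} would make the correction higher-order and repair the argument; this is precisely the weight bookkeeping you deferred.)

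The paper's route is organized differently and is where hypothesis (i) actually does work. It first confines $f_t(V_\tau(t))$ to a line $(W-\lambda_1 Z=Z-\lambda_2 Y=0)\subset\mathbb P^3$ by controlling the ratios $P_2^\gamma/P_1^\beta$ and $P_3^\delta/P_2^\gamma$, and then, supposing $f_t|_{V_\tau(t)}$ is a nonconstant map onto this $\mathbb P^1$, computes its degree in two ways: once as the intersection number $V_\tau(t)\cdot(P_0^\alpha=0)=\nu^4/\alpha$ (the preimage of $[0:1:\lambda_2:\lambda_1\lambda_2]$), and once by bounding the local multiplicity over $[1:0:0:0]$ at each $p_l(t)$ by $m:=\beta\gamma\delta-\alpha(\gamma\delta+\beta\delta+\beta\gamma)$, which is positive exactly by (i). This gives
\[
\deg\bigl(f_t|_{V_\tau(t)}\bigr)=\sum_l(m-\rho_l)\ \le\ \frac{\nu^4}{\alpha\beta\gamma\delta}\,m\ =\ \nu^4\Bigl(\tfrac1\alpha-\tfrac1\beta-\tfrac1\gamma-\tfrac1\delta\Bigr)\ <\ \frac{\nu^4}{\alpha},
\]
a contradiction. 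It is this global degree comparison --- not a pointwise nonvanishing of a single component --- that carries the argument in the paper.
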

We shall prove only the condition $(i)$, which corresponds to the orbit contained in the plane $x_0(t)=0$ because the condition $(ii)$ is analogous. We observe that in the last case we have to work with the orbit contained in plane $x_2(t)=0$.   
To simplify the notation we will omit the index $t$ in some expressions.
 The idea is to prove firstly that $f_{t}(V_{\tau}(t))$ is contained in a line in $\mathbb P^3$ and then to prove that $f_{t}(V_{\tau}(t))$ is in fact a unique point using degree theory and the Riemann-Hurwitz formula. 
\begin{proof}   We can suppose that such an orbit can be parametrized as $s\to(0,s^{\gamma\delta},s^{\beta\delta},s^{\beta\gamma})$.  After evaluating the mapping $f_{t}$ on this orbit we get:
$$f_{t}(\psi(s))=[s^{\alpha(\gamma\delta+\beta\delta+\beta\gamma)}h_{0}^\alpha:s^{\beta\gamma\delta}u_1^{\beta}:s^{\beta\gamma\delta}u_2^{\gamma}:s^{\beta\gamma\delta}u_3^{\delta}].$$ 
This can be written as 
\begin{equation}\label{eq5}
[s^{\alpha(\gamma\delta+\beta\delta+\beta\gamma)}\tilde{h_{0}}:s^{\beta\gamma\delta}u_1^{\beta}:s^{\beta\gamma\delta}u_2^{\gamma}:s^{\beta\gamma\delta}u_3^{\delta}]=[X(s):Y(s):Z(s):W(s)].
\end{equation} 

First we prove that $f_{t}(V_{\tau}(t))$ is contained in a line of the form $(W-\lambda_1Z=Z-\lambda_2 Y=0)$ of $\mathbb P^3.$
Let us consider the meromorphic function with values in $\mathbb{P}^1$ given by $g_{1t}(s)=\frac{Z(s)}{Y(s)}=\frac{u_2^{\gamma}}{u_1^{\beta}}$. When $s\to0$ this function goes to a constant $\lambda_2\neq0,\lambda_2\neq\infty$.
Observe that  for small $t$ the function $\frac{P_{1}^{\beta}}{P_{2}^{\gamma}}(t): V_{\tau}(t)\backslash\cup_{j}B_{j}(p_{j}(t)) \rightarrow \mathbb{P}^1$ stays near $\frac{P_{1}^{\beta}}{P_{2}^{\gamma}}(0): V_{\tau}(0)\backslash\cup_{j}B_{j}(p_{j}(0)) \rightarrow \mathbb{P}^1$. Note that since $V_{\tau}(0)$ is a fiber $\frac{P_{1}^{\beta}}{P_{2}^{\gamma}}(0)$ does not vanish. We conclude that $f_{t}(V_{\tau}(t))\subset(Z-\lambda_2 Y=0)$. Analogously, $f_{t}(V_{\tau}(t))\subset(W-\lambda_1 Z=0)$. Therefore we conclude that that $f_{t}(V_{\tau}(t))$ is contained in a line of the form $(W-\lambda_1Z=Z-\lambda_2 Y=0)$. If $\beta(\gamma\delta+\alpha\delta+\alpha\gamma)>\alpha\gamma\delta$ we can write equation \ref{eq5} as $$[\tilde{h}_{0}(s):s^m u_1^{\beta}:s^m u_2^{\gamma}:s^m u_3^{\delta}]$$ where $m=\beta\gamma\delta-\beta(\gamma\delta+\alpha\delta+\alpha\gamma)$. Observe that when $s=0$ the function  $\tilde{h}_{0}(s)$ could vanish; in this case such a point corresponds to a indeterminacy point $p_{j}(t)$ of $f_{t}$ for some $j$.  At $p_{j}(t)$ we can write the first component of equation \ref{eq5} as $\tilde{h}_{0}(s)=s^{\rho_j}\tilde{h}_{j}(s)$ where either $\tilde{h}_{j}(s) \in \mathcal O^{\ast}(\mathbb C,0)$ or $\tilde{h}_{0}\equiv0$. However, in the second case we are done, that is, $V_{\tau}(t)$ is a fiber of $f_{t}$. At $p_{j}(t)$ we have two possibilities:
Case (1): ${\rho_j}< m$. In this case we can write equation \ref{eq5} as
 \begin{equation}\label{eq6}
[\tilde{h}_{j}(s):s^{m-{\rho_j}} u_1^{\beta}:s^{m-{\rho_j}} u_2^{\gamma}:s^{m-{\rho_j}} u_3^{\delta}].
\end{equation}
If $s\to0$ the image goes to $[1:0:0:0]$, hence ${f_{t}}|_{V_{\tau}(t)}(p_{j}(t))=[1:0:0:0]$.

Case (2): ${\rho_j}\geq m$. We can write equation \ref{eq6} as
\begin{equation}\label{eq7}
[s^{{\rho_j}-m}\tilde{h}_{j}(s):  u_1^{\beta}: u_2^{\gamma}:u_3^{\delta}].
\end{equation} If $s\to0$ the image goes to $[a:1:\lambda_2:\lambda_1\lambda_2]$ where $a\in\mathbb C$. This is due to the fact that the image of such a point belongs to the curve $(W-\lambda_1Z=Z-\lambda_2 Y=0)\simeq\mathbb P^1$ and hence we can write it as $[a:1:\lambda_2:\lambda_1\lambda_2]$. Suppose that $f_{t}|_{V_{\tau}(t)}$ is not constant and consider the mapping $f_{t}|_{V_{\tau}(t)}:V_{\tau}(t) \to f_{t}(V_{\tau}(t))\subset(W-\lambda_1Z=Z-\lambda_2 Y=0)$ for fixed $t$. Let $Q=\{j|\rho_{j}<m\}$. Note that $p\in V_{\tau}(t)$ and $f_{t}|_{V_{\tau}(t)}(p)=[1:0:0:0]$ imply that $p=p_j(t)$ for some $j \in Q$; that is, $(f_{t}|_{V_{\tau}(t)})^{-1}[1:0:0:0]=\{p_{j}(t),j \in Q\}$. Moreover, by equation \ref{eq7} we have $mult(f_{t}|_{V_{\tau}(t)},p_j(t))=m-\rho_j$. In particular, the degree of $f_{t}|_{V_{\tau}(t)}$ is $$deg(f_{t}|_{V_{\tau}(t)})=\sum_j(m-\rho_j).$$ On the other hand, if $p\in(f_{t}|_{V_{\tau}(t)})^{-1}[0:1:\lambda_2:\lambda_1\lambda_2]$ then $(P_{0}^{\alpha}(p)=0)$ and so $mult(f_{t}|_{V_{\tau}(t)},p)$ is equal to the intersection number of $(P_{0}^{\alpha}(t)=0)$ and $V_\tau(t)$ at $p$. Hence 
$$deg(f_{t}|_{V_{\tau}(t)})={V_{\tau}(t)}.P_{0}^{\alpha}(t)=deg({V_{\tau}(t)})\times deg(P_{0}^{\alpha}(t))=\frac{\nu^4}{\alpha}=\sum_j(m-\rho_j).$$ 
But $(m-\rho_j)\leq m=\beta\gamma\delta-\beta(\gamma\delta+\alpha\delta+\alpha\gamma)$ and so 
  $$\sum_{j\in Q}(m-\rho_j)\leq\# Q\times m\leq\frac{\nu^4}{\alpha\beta\gamma\delta}\times (\beta\gamma\delta-\beta(\gamma\delta+\alpha\delta+\alpha\gamma))={\nu^4}(\frac{1}{\alpha}-\frac{1}{\beta}-\frac{1}{\gamma}-\frac{1}{\delta})$$
  which implies that $\frac{1}{\alpha}\leq\frac{1}{\alpha}-\frac{1}{\beta}-\frac{1}{\gamma}-\frac{1}{\delta}$ and we arrive at a contradiction. Therefore, $Q=\emptyset$, $f_{t}|_{V_{\tau}(t)}$ is a constant and ${V_{\tau}(t)}$ is a fiber of $f_{t}$. \end{proof}

\begin{remark} Now we will discuss the minor modifications that we need to do obtain analogous results for the remaining cases
\begin{itemize}
\item For the case $(2)$ just replace $\alpha$ by $1$ and repeat the same argument. 
\item Cases $(3)$ and $(4)$ are simpler.  An analogue of Lemma \ref{casomenor} is enough to prove that any orbit (generic or which is contained in any coordinate plane) that extends as a singular curve of the foliation $\mathcal F_t$ is a fiber of $f_t$.  
\end{itemize}
\end{remark}
\subsection{Construction of the families $(\mathcal G_{t})_{t \in D_{\epsilon}}$}\label{familia das folheacoes} We will restrict to the case (1) because the other cases are analogous.

In this part we will explain how to obtain a family of candidates to be a deformation of $\mathcal G_0$. \par Set $\mathcal C(\mathcal F_0)=\{p_{1},\dots,p_{j},\dots,p_{\frac{\nu^4}{\alpha\beta\gamma\delta}}\}$. According to \cite[Theorem 3, p. 653]{ln3}, for each $j\in\{1,\dots,\frac{\nu^4}{\alpha\beta\gamma\delta}\}$ there is:

\begin{itemize}
\item[a)] a germ of a holomorphic one parameter family of foliations $t\in D_{\epsilon}\mapsto \mathcal G^{j}_t=(\eta^{j}_t=0)$ where $(\eta^{j}_t)_{t \in D_{\epsilon}}$ is a $2$-dimensional  quasi-homogeneous singularity of type\break $(\beta\gamma\delta,\alpha\gamma\delta,\alpha\beta\delta,\alpha\beta\gamma;\alpha\beta\gamma\delta(d-1))$. 

\item[b)] a holomorphic germ of curve $p_j: D_{\epsilon} \to \mathbb P^{4}$ in such a way that 
for each fixed $t \in D_{\epsilon}$,  $(\eta^{j}_t)(p_j(t))=0$.
\end{itemize} 

In other words, the persistence of the quasi-homogeneous singular set of $\mathcal F_0$ gives us (1) a holomorphic path of singular points and (2) a holomorphic path of integrable $1$-forms of the same type of the of original foliation.  

Moreover, each $\eta^{j}_t$ is invariant under the $\mathbb C^\ast$-action \begin{equation}
(x_0,x_1,x_2,x_3)\to(s^{\beta\gamma\delta}x_0,s^{\alpha\gamma\delta}x_1,s^{\alpha\beta\delta}x_2,s^{\alpha\beta\gamma}x_3).
\end{equation}
As a consequence the $1$-forms $(\eta^{j}_t)_{t \in D_{\epsilon}}$ naturally define a $1$-dimensional foliation on $\mathbb P^{3}_{w}$, the weighted projective $3$-space with weights $w=(\beta\gamma\delta,\alpha\gamma\delta,\alpha\beta\delta,\alpha\beta\gamma)$.  On the other hand, since $1<\alpha<\beta<\gamma<\delta$ and $\alpha,\beta,\gamma$ and $\delta$  are pairwise relatively prime, we can conclude using \cite[Lemma 5.7, p.106]{Fletcher} that $\mathbb P^{3}_{w}$ is biholomorphic to $\mathbb P^{3}$. Hence, any family $(\eta^{j}_t)_{t \in D_{\epsilon}}$ can be interpreted as a family of $1$-dimensional foliations on $\mathbb P^3$. 
It remanis to  see that these candidates leave the coordinate planes invariant.
 Now we will prove a Lemma which ensures that the deformations $(\eta^{j}_t)_{t \in D_{\epsilon}}$ of $\eta^{j}_0$ also have invariant hyperplanes. Recall that $\eta^{j}_0=\eta$ as in the equation $\ref{eta}.$ 
 
 \begin{Lemma} Let  $(\eta^{j}_t)_{t \in D_{\epsilon}}$ be a holomorphic deformation of $\eta^{j}_0$, where $\eta^{j}_0$ is a $2$-dimensional  quasi-homogeneous singularity of type $(\beta\gamma\delta,\alpha\gamma\delta,\alpha\beta\delta,\alpha\beta\gamma;\alpha\beta\gamma\delta(d-1))$ of $\mathcal F_0$. If $\alpha$, $\beta$, $\gamma$ and $\delta$ are as:
 
\begin{itemize}
\item In case $(1)$ above the deformation $\eta^{j}_t$ of $\eta_0$ leaves invariant the coordinate hyperplanes $(x_0x_1x_2x_3=0)$, 
\item In case $(2)$ above the deformation $\eta^{j}_t$ of $\eta_0$ leaves invariant the coordinate hyperplanes $(x_1x_2x_3=0)$,
\item In case $(3)$ above the deformation $\eta^{j}_t$ of $\eta_0$ leaves invariant the coordinate hyperplanes $(x_2x_3=0)$, 
\item In case $(4)$ above the deformation $\eta^{j}_t$ of $\eta_0$ leaves invariant the coordinate hyperplane $(x_3=0)$. 
\end{itemize}
\end{Lemma}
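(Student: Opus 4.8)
The plan is to show that each of the $\alpha\beta\gamma\delta$-many local factors $x_i$ appearing in $\eta = \eta^j_0$ (with the right multiplicity dictated by the case) persists as an invariant hyperplane of the deformation. I would argue hyperplane-by-hyperplane, reducing to a one-variable statement about deformations of quasi-homogeneous singularities. Fix, say, the hyperplane $H = (x_3 = 0)$, which divides the coefficients of $\eta$ wherever $dx_0, dx_1, dx_2$ are absent — concretely $H$ is invariant by $\eta^j_0$ because every monomial of $\eta$ that survives the restriction $x_3 = 0$ is already a multiple of $x_3$ (inspect \eqref{eta}: the only terms not containing $x_3$ are $\alpha\delta x_1 x_2 P_{12}(\cdots) dx_0\wedge dx_3$ etc., and these all carry $dx_3$, so they vanish upon restriction). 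I would first restate this as: the divisor $(x_3=0)$ is $\mathcal F_0$-invariant near the central point.

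The key step is invariance under the $\mathbb C^\ast$-action. Since $\eta^j_t$ is, for every $t$, a quasi-homogeneous $2$-form of type $(\beta\gamma\delta,\alpha\gamma\delta,\alpha\beta\delta,\alpha\beta\gamma;\alpha\beta\gamma\delta(d-1))$ — this is exactly the content of \cite[Theorem 3, p. 653]{ln3} invoked in item a) above — it descends to a $1$-dimensional foliation $\mathcal H^j_t$ on the weighted projective space $\mathbb P^3_w$, $w = (\beta\gamma\delta,\alpha\gamma\delta,\alpha\beta\delta,\alpha\beta\gamma)$. In the coordinates $w$ the hyperplane $(x_3=0)$ becomes a weighted hyperplane, hence an irreducible divisor $D_3 \subset \mathbb P^3_w$; at $t=0$ it is invariant. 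Now invariance of an algebraic hypersurface is a closed condition on the parameter $t$, but I want it to be \emph{open} (in fact constantly true): here I would use that $D_3$ is, by the arithmetic hypotheses on $\alpha,\beta,\gamma,\delta$ and \cite[Lemma 5.7, p.106]{Fletcher}, the image of a coordinate hyperplane under the biholomorphism $\mathbb P^3_w \cong \mathbb P^3$, together with the fact that the restriction $\mathcal H^j_0|_{D_3}$ is itself a generalized Lotka–Volterra–type (generically non-dicritical) foliation on $D_3 \cong \mathbb P^2_{w'}$ whose leaves are not all algebraic, so $D_3$ cannot be deformed away while staying invariant: any small deformation keeping $\mathcal H^j_t$ in the same component of $\Fol$ must keep $D_3$ invariant because invariant hypersurfaces of bounded degree deform with the foliation and there is no nearby candidate of the same degree. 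Equivalently — and this is the cleaner route — I would argue directly on $\eta^j_t$: writing $\eta^j_t = \eta^j_0 + t\,\theta + O(t^2)$, invariance of $(x_3=0)$ by $\eta^j_t$ to first order is the condition $x_3 \mid (\iota_{\partial_{x_3}}\theta)\wedge(\text{stuff})$ restricted to $x_3=0$; the quasi-homogeneity constraint forces $\theta$ to have the same weighted-homogeneous shape as $\eta^j_0$, and then a weight count shows every monomial of $\theta$ that could violate invariance would have negative weighted degree in one of the variables, hence cannot occur. Iterating in $t$ (or invoking the rigidity of the weighted structure) gives invariance for all $t \in D_\epsilon$.

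For the four cases the only change is the list of hyperplanes one can control: in case (4), with $1 = \alpha = \beta = \gamma < \delta$, the three "light" coordinates $x_0, x_1, x_2$ carry equal weight $\delta^2$ (up to the common factor) and the $\mathbb C^\ast$-action is too symmetric to pin them down individually under deformation — only $x_3$, the coordinate with the anomalous weight $\delta^2$ replaced by $\delta\cdot\text{(something smaller)}$, survives the weight bookkeeping — whereas in case (1) all four weights are distinct and pairwise-coprime-adjusted, so all four hyperplanes are rigid. So I would organize the proof as: (i) the $t=0$ invariance statement, uniformly; (ii) reduction to $\mathbb P^3_w$ via the $\mathbb C^\ast$-invariance from \cite{ln3}; (iii) the weight-count rigidity argument, done once in case (1) and then remarked to degrade exactly as stated in cases (2)–(4).

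The main obstacle I expect is step (iii): making the "weight count forbids the bad monomials" argument rigorous without simply assuming what we want. The subtlety is that $\eta^j_t$ need \emph{not} have its coefficients literally weighted-homogeneous of the same multidegrees as $\eta^j_0$ — \cite[Theorem 3]{ln3} gives that $\eta^j_t$ is a quasi-homogeneous \emph{germ} after a change of coordinates depending on $t$, so one must first normalize the family (absorb the $t$-dependent analytic change of variables) before the monomial bookkeeping is valid; keeping track of how the normalizing coordinate change interacts with the hyperplanes $(x_i=0)$ is where the real work — and the dependence on which case one is in — lives.
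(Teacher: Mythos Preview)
Your route (a) --- that invariant hypersurfaces persist because ``there is no nearby candidate of the same degree'' --- is not an argument: invariant hypersurfaces can disappear under deformation, and nothing you have written prevents that here.

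Your route (b), the weight count on monomials of $\eta^j_t$, is the correct idea and is essentially what the paper does --- but the obstruction you name is wrong. A monomial violating invariance of $(x_3=0)$ is one carrying neither $x_3$ nor $dx_3$; such a monomial is not forbidden by any negativity of weighted degree, but by a \emph{congruence}: its total weight is divisible by $\delta$ (since the weights $\beta\gamma\delta,\alpha\gamma\delta,\alpha\beta\delta$ of $x_0,x_1,x_2,dx_0,dx_1,dx_2$ are all multiples of $\delta$), whereas the total weight of $\eta^j_t$ is congruent to $\alpha\beta\gamma\not\equiv 0\pmod\delta$. The paper packages this same arithmetic as a holonomy computation. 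Since the weighted Euler field $S$ is tangent to $\eta^j_t$ for every $t$ (this is exactly the $\mathbb C^*$-invariance furnished by \cite{ln3}), each coordinate axis is a leaf, and the holonomy of the $x_3$-axis on the transversal $(x_3=1)$ is the diagonal linear map $H(x_0,x_1,x_2)=(e^{2\pi i\delta/\alpha}x_0,\,e^{2\pi i\delta/\beta}x_1,\,e^{2\pi i\delta/\gamma}x_2)$. The restricted vector field $Y^j_t$ defined by $\iota_{Y^j_t}(dx_0\wedge dx_1\wedge dx_2)=\eta^j_t|_{x_3=1}$ is therefore $H$-equivariant, so a monomial $x_1^n x_2^m\,\partial_{x_0}$ in $Y^j_t$ would force $\tfrac{n\delta}{\beta}+\tfrac{m\delta}{\gamma}-\tfrac{\delta}{\alpha}\in\mathbb Z$, impossible when $\alpha,\beta,\gamma,\delta$ are pairwise coprime. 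This is your divisibility obstruction read through the $(\alpha\beta\gamma)$-th roots of unity inside $\mathbb C^*$, and the degradation in cases (2)--(4) is precisely the progressive loss of coprimality. Finally, your concern about a $t$-dependent normalization is misplaced: the result quoted from \cite{ln3} already gives $\eta^j_t$ invariant under the \emph{fixed} $\mathbb C^*$-action in the given coordinates, so the holonomy (equivalently, the monomial) argument applies directly with no first-order expansion in $t$.
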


We will prove only the case $(1)$, for the other cases the argument is analogous. 
\begin{proof} 
Case $(1)$:
The holonomy map (see \cite[section 6.6.1, p.129]{cacede}) of the $x_3$-axis at $x_3=1$ is $$H(x_0,x_1,x_2)=(e^{2i\pi \frac{\delta}{\alpha}}.x_0,e^{2i\pi \frac{\delta}{\beta}}.x_1,e^{2i\pi \frac{\delta}{\gamma}}.x_2).$$ We will prove that this holonomy map leaves invariant the foliation $\eta^{j}_t|_{(x_3=1)}$. Let us write the foliation $\eta_{t}^{j}|_{(x_3=1)}$ as a vector field $Y_t^{j}$, $\eta^{j}_t|_{(x_3=1)}=i_{Y_t^{j}}(dx_0\wedge dx_1\wedge dx_2)$.

When $t=0$ we have that
\begin{multline}\label{eta1}
 \eta_{0}^{j}|_{(x_3=1)}=\alpha\beta x_2P_{23}(x_0^{\alpha},x_1^{\beta},x_2^{\gamma},1)dx_0\wedge dx_1\\-\alpha\gamma x_1P_{13}(x_0^{\alpha},x_1^{\beta},x_2^{\gamma},1)dx_0\wedge dx_2\\+\beta\gamma x_0P_{03}(x_0^{\alpha},x_1^{\beta},x_2^{\gamma},1)dx_1\wedge dx_2
 \end{multline}
  and so 
  
  \begin{multline}\label{Y0}
  Y_0^{j}=\beta\gamma x_0P_{03}(x_0^{\alpha},x_1^{\beta},x_2^{\gamma},1)\frac{\partial}{\partial x_0}-\alpha\gamma x_1P_{13}(x_0^{\alpha},x_1^{\beta},x_2^{\gamma},1)\frac{\partial}{\partial x_1}\\+\alpha\beta x_2P_{23}(x_0^{\alpha},x_1^{\beta},x_2^{\gamma},1)\frac{\partial}{\partial x_2},
   \end{multline} so that $H^{\ast}Y_0^{j}=Y_0^{j}$. It follows that $H^{\ast}Y_t^{j}=Y_t^{j}$. Let us prove that the axis $(x_0=0)$ is $Y_t^{j}$-invariant. If not, then the first component of $Y_t^{j}$ has a monomial of the type $x_1^{n}x_2^{m}\frac{\partial}{\partial x_0}$, for which $H^{\ast}(x_1^{n}x_2^{m}\frac{\partial}{\partial x_0})=e^{2i\pi{(\frac{n\delta}{\beta}+\frac{m\delta}{\gamma}-\frac{\delta}{\alpha})}}(x_1^{n}x_2^{m}\frac{\partial}{\partial x_0})$, where the pair $(n,m)\in\{ \mathbb N^2\}\cup\{\{0\}\times \mathbb N\}\cup\{ \mathbb N\times\{0\}\}$. But this would imply that   
$(\frac{n\delta}{\beta}+\frac{m\delta}{\gamma}-\frac{\delta}{\alpha})\in \mathbb Z$ which is impossible, because  $\alpha$, $\beta$, $\gamma$ and $\delta$ are pairwise relatively prime. Similarly, the $2$-planes $(x_1=0)$, $(x_2=0)$ and 
$(x_3=0)$ are $\eta^{j}_t$ invariant.  For the other axes we proceed in a analogous way. 
\end{proof} 
Therefore, we see that we can push forward the foliation to $\mathbb P^3$. 
 In this way, we get a family of $1$-dimensional foliations of degree $d$, all of them leaving invariant the appropriate $2$-planes for each situation. These foliations will be the candidates to be a deformation of $\mathcal G_{0}$.\\
 {\bf Notation.} We will use the notation:  $\mathcal C(\mathcal F_t)=\{p_{1}(t),\dots,p_{j}(t),\dots,p_{\frac{\nu^4}{\alpha\beta\gamma\delta}}(t)\}$
  
  \begin{remark} We would like to observe that when we make a weighted blow-up with weights $w=(\beta\gamma\delta,\alpha\gamma\delta,\alpha\beta\delta,\alpha\beta\gamma)$ at the point $p_{j}(t)$ the foliation $\mathcal G_{t}^{j} \in  \mathcal M(d,3)$ appears as foliation on the exceptional divisor of the blow up. Indeed, if we denote the blow up  $\pi_{w}(t):(\widetilde{\mathbb P^4(t)},E_{j}(t))\to (\mathbb P^4,p_j(t))$ then the divisor $E_{j}(t)$ is biholomorphic to $\mathbb P^3$ and $\pi_{w}(t)^{\ast}(\mathcal F_t)$ extends to $\widetilde{\mathbb P^4(t)}$, the complex orbifold obtained after the blow-up \cite{mamor}. 
     We would like to observe that the new space $(\widetilde{\mathbb P^4(t)},E_{j}(t))$ has four 2-planes (each of them biholomorphic to $\mathbb P^2$) of singular points located at the exceptional divisor $E_{j}(t)=\mathbb P^3$. They actually coincide with the four coordinate $2$-planes of $\mathbb P^{3}_{w}$. Recall that at $p_{j}(t)$, the vector field $$S={\beta\gamma\delta}\frac{\partial}{\partial x_0}+{\alpha\gamma\delta}\frac{\partial}{\partial x_1}+{\alpha\beta\delta}\frac{\partial}{\partial x_2}+{\alpha\beta\gamma}\frac{\partial}{\partial x_3}$$   is tangent to $\mathcal F_t$ and therefore its strict transform by $\pi_{w}(t)$ is transverse to $E_{j}(t)$. In fact we can summarize all this discussion saying that $\pi_{w}(t)^{\ast}|_{E_{j}(t)}\simeq\mathcal G_{t}^{j} \in  \mathcal M(d,3)$.
 \end{remark}

\begin{remark} In principle, we have $\frac{\nu^4}{\alpha\beta\gamma\delta}$ different families of one dimensional foliations, $(\mathcal G^{j}_t)_{t \in D_{\epsilon}}$, $1\leq j\leq \frac{\nu^4}{\alpha\beta\gamma\delta}$. We cannot assert a priori that, if $i\neq j$, $(\mathcal G^{i}_t)$ is equivalent to $(\mathcal G^{j}_t)$ for any $t\in D_{\epsilon}$. Indeed, this fact is true, but it will be a consequence of the final result.

Since $\mathcal G_0\in  \mathcal M(d,3)$ and $\mathcal M(d,3)$ is Zariski generic, there exists a countable subset $C\subset D_{\epsilon}$ such that  $\mathcal G_t\in  \mathcal M(d,3)$ for all $t\notin C$.
\end{remark}

We will choose a family $(\mathcal G_t)_{t \in D_{\epsilon}}$ as being the family $(\mathcal G^{1}_t)_{t \in D_{\epsilon}}$.
\subsection{End of the proof of Theorem \ref{teob}}\label{ss:33} 
Let $(\mathcal F_t)_{t \in D_{\epsilon}}$ be the germ of deformation of $\mathcal F_0=f_0^*(\mathcal G_0)$ of \S  \ref{plano da prova} and $(f_t,\mathcal G_t)_{t \in D_{\epsilon}}$ be the germ of deformation of $(f_0,\mathcal G_0)$ obtained in \S \ref{recupmapas} and \S \ref{familia das folheacoes}. Since the pair $(f_0,\mathcal G_0)$ is generic and the set of generic pairs is open, the pair $(f_t,\mathcal G_t)$ is also generic for all  ${t \in D_{\epsilon}}$. Consider the holomorphic family of foliations $\left(\widetilde{\mathcal F_t}\right)_{t \in D_{\epsilon}}$ defined by $\widetilde{\mathcal F}_t=f_t^*(\mathcal G_t)$, $\forall\,{t \in D_{\epsilon}}$. Of course $\widetilde{\mathcal F_t}={\mathcal F_0}$ and $\widetilde{\mathcal F}_t\in PB(\Theta_{\nu,d}^{\alpha,\beta,\gamma,\delta},2,4)$, ${t \in D_{\epsilon}}$.

\begin{Lemma}\label{l:36}
${\mathcal F}_t=\widetilde{\mathcal F}_t$ for all $t \in D_{\epsilon}$. In particular, ${\mathcal F}_t\in PB(\Theta_{\nu,d}^{\alpha,\beta,\gamma,\delta},2,4)$, $\forall t \in D_{\epsilon}$.
\end{Lemma}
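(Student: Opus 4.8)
The plan is to show that the two families of foliations $\left(\mathcal F_t\right)_{t\in D_\epsilon}$ and $\left(\widetilde{\mathcal F}_t\right)_{t\in D_\epsilon}$, which agree at $t=0$, must in fact coincide identically, by exhibiting that they share enough singular data — namely the Kupka curves $V_{\tau_i}(t)$ with their transversal types — to force equality. The key geometric input is that a generic element of $PB(\Theta_{\nu,d}^{\alpha,\beta,\gamma,\delta},2,4)$ is determined, up to the finite ambiguity in the pull-back construction, by its Kupka set together with the quasi-homogeneous central points $\mathcal C(\mathcal F_t)$. Since both families are built out of the same deformed curves $V_{\tau_i}(t)$ (Lemma \ref{subvariedades}) and the same central points (Proposition \ref{recupmapas}(ii)), one expects the two foliations to be forced to agree.

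First I would record that, by construction, $\widetilde{\mathcal F}_t = f_t^*(\mathcal G_t)$ is a generic pull-back for every $t$ (the pair $(f_t,\mathcal G_t)$ being generic, as the set of generic pairs is open and contains $(f_0,\mathcal G_0)$), so that the Kupka set of $\widetilde{\mathcal F}_t$ is exactly $\bigcup_i V_{\tau_i}(t)\setminus \mathcal C(\widetilde{\mathcal F}_t)$, with the curves $V_{\tau_i}(t)$ being precisely the fibers of $f_t$ over $\mathrm{Sing}(\mathcal G_t)$ — these are the very same curves produced in \S\ref{familia dos mapas} by the deformation argument (Lemma \ref{casomenor} and Lemma \ref{casomaior}). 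On the other side, Lemma \ref{subvariedades} guarantees that the Kupka set of ${\mathcal F}_t$ contains $\bigcup_i V_{\tau_i}(t)\setminus \mathcal C({\mathcal F}_t)$, and that along each such curve the transversal type of ${\mathcal F}_t$ is constant and stable under the deformation — hence equal to the transversal type of $\widetilde{\mathcal F}_t$ along the same curve, which is governed by the eigenvalue data at the corresponding singularity of $\mathcal G_t$. Thus ${\mathcal F}_t$ and $\widetilde{\mathcal F}_t$ are two codimension-two foliations of the same degree $\Theta_{\nu,d}^{\alpha,\beta,\gamma,\delta}$ having, along $\bigcup_i V_{\tau_i}(t)$, the same tangent planes.

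The decisive step is then a rigidity/uniqueness argument: a foliation in a neighborhood of a generic pull-back is determined by its behaviour along the Kupka curves and at the central quasi-homogeneous points. Concretely, I would consider the difference of the two defining $q$-forms $\eta_{\mathcal F_t}$ and $\eta_{\widetilde{\mathcal F}_t}$ (normalized to coincide at $t=0$), show that it vanishes to high order along every $V_{\tau_i}(t)$ and along $\mathcal C(\mathcal F_t)$ because of the coincidence of transversal types and of the quasi-homogeneous models (the latter by the stability statement of \cite{ln3}), and then use a dimension count — the curves $V_{\tau_i}(t)$ are complete intersections of three hypersurfaces whose degrees grow with $\nu$ and $d$ — to conclude that a section of the relevant twisted sheaf of $q$-forms vanishing to that order along that configuration must vanish identically, so $\mathcal F_t=\widetilde{\mathcal F}_t$. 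Since the set of $t$ for which equality holds is closed (clearly) and open (by the above local rigidity, applied at each $t_0\in D_\epsilon$ after re-centering), and nonempty (it contains $0$), it is all of $D_\epsilon$, and the final assertion ${\mathcal F}_t\in PB(\Theta_{\nu,d}^{\alpha,\beta,\gamma,\delta},2,4)$ follows because $\widetilde{\mathcal F}_t$ is visibly a pull-back.

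The main obstacle I anticipate is exactly this rigidity step: making precise the claim that the Kupka configuration plus the central points determine the foliation near a generic pull-back. One has to control the linear system of $q$-forms of degree $\Theta_{\nu,d}^{\alpha,\beta,\gamma,\delta}$ vanishing appropriately along the $N$ complete-intersection curves $V_{\tau_i}(t)$ (which all pass through the $\tfrac{\nu^4}{\alpha\beta\gamma\delta}$ central points) and show this system is as small as the pull-back construction predicts; the weighted nature of the singularities at $\mathcal C(\mathcal F_t)$ means the local analysis there must be carried out on the weighted blow-ups, where the exceptional divisor carries the copy of $\mathcal G_t^j$, as in the remarks of \S\ref{familia das folheacoes}. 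This is where the arithmetic hypotheses on $\alpha,\beta,\gamma,\delta$ re-enter, ensuring the weighted projective spaces involved are smooth (biholomorphic to $\mathbb P^3$) and the blow-up computations are clean.
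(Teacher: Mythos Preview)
Your approach diverges substantially from the paper's, and the step you yourself flag as the ``main obstacle'' is in fact a genuine gap. The paper does \emph{not} argue via a cohomological rigidity or dimension count on $2$-forms vanishing along the Kupka configuration. Instead, it shows that $\mathcal F_t$ and $\widetilde{\mathcal F}_t$ share a common \emph{leaf} whose Zariski closure is all of $\mathbb P^4$; two foliations agreeing on a Zariski-dense set agree everywhere, so $\mathcal F_t=\widetilde{\mathcal F}_t$ (first for $t$ outside a countable set where $\mathcal G_t\in\mathcal M(d,3)$, then for all $t$ by closedness). The common leaf is produced on the weighted blow-up $M_w(t)$ of $\mathbb P^4$ at $\mathcal C(\mathcal F_t)$: one fixes a singularity $\tau_1(t)$ of $\mathcal G_t$ away from the coordinate planes, considers the strict transform $V_{\tau_1}'(t)$, and builds a $2$-dimensional separatrix strip $\Gamma$ of $\mathcal F_t'$ along $V_{\tau_1}'(t)$ (via the Kupka local-product structure and the hyperbolic separatrices of $\mathcal G_t$) together with a transverse $3$-dimensional $\Phi_t$-invariant strip $\Sigma$. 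A compactness argument (Claim~\ref{cl:38}) then forces $\Gamma$ to be $\Phi_t$-invariant, i.e.\ $\Gamma=\Phi_t^{-1}(\lambda_1(t))$ for a separatrix $\lambda_1(t)$ of $\mathcal G_t$, so $\Gamma$ is simultaneously a separatrix of $\widetilde{\mathcal F}_t'$. Zariski density of the corresponding leaf comes from $\overline{\lambda_1(t)}^Z=\mathbb P^3$, which is exactly Theorem~\ref{teoB}.

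Your proposal rests on the unproved assertion that coincidence of Kupka curves, transversal types, and central points forces equality of the foliations, and there are two concrete problems. First, ``same transversal type'' along $V_{\tau_i}(t)$ only means the transverse germs of $\mathcal F_t$ and $\widetilde{\mathcal F}_t$ lie in the same local biholomorphism class; it does not say the defining $2$-forms agree along the curve to any prescribed order, so your claim that $\eta_{\mathcal F_t}-\eta_{\widetilde{\mathcal F}_t}$ vanishes to high order there is unjustified (indeed, the phrase ``same tangent planes'' is already ill-posed along the singular locus). Second, even granting some vanishing, the dimension count you gesture at is the entire content of the lemma and you give no mechanism to carry it out; note that two pull-backs $f_t^*\mathcal G$ and $f_t^*\mathcal G'$ with $\mathcal G,\mathcal G'$ distinct but sharing the same singular points and local linear data would exhibit identical Kupka curves and transversal types yet be different foliations. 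The missing idea is precisely the dynamical input of Theorem~\ref{teoB} --- the absence of nontrivial algebraic invariant sets for $\mathcal G_t$ --- which your argument never invokes and which is what converts local agreement along one separatrix into global equality.
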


The idea is to prove that $\widetilde{\mathcal F}_t$ and ${\mathcal F}_t$ have a common leaf $L_t$, for any $t \in D_{\epsilon}$. In particular, the foliations $\widetilde{\mathcal F}_t$ and ${\mathcal F}_t$ coincide in the Zariski closure $\overline{L}_t^{\,Z}$ of $L_t$.  Recall that there exists a germ of countable set $C\subset D_{\epsilon}$ such that $\mathcal G_t\in \mathcal M(d,3)$ for all $t\notin C$. 

The fact that $\mathcal G_t\in\mathcal M(d,3)$ implies that the Zariski closure of any leaf of $\mathcal G_t$ different to $(x_0x_1x_2x_3=0)$ is the whole $\mathbb P^{3}$. As we will see, this will imply that $\overline{L}_t^{\,Z}$ is the whole $\mathbb P^4$, $\forall\,t\notin C$. Since $C$ is countable this will finish the proof of Lemma \ref{l:36}.
\vskip.1in

We will restrict the prove to case $1$, because the proofs follow the same lines, thus we focus on the first for brevity.  

\vskip.1in
We begin with a punctual weighted blowing-up with weights $w=(\beta\gamma\delta,\alpha\gamma\delta,\alpha\beta\delta,\alpha\beta\gamma)$ at the $\frac{\nu^4}{\alpha\beta\gamma\delta}$ points $p_1(t),\dots,p_\frac{\nu^4}{\alpha\beta\gamma\delta}(t)$ of $C(\mathcal F_t)$. Let us denote by $M_{w}(t)$ the complex orbifold obtained from this blow-up procedure, by $\pi_{w}(t)\colon M_{w}(t)\to \mathbb P^4$ the blow-up map and by $E_1(t),...,E_\frac{\nu^4}{\alpha\beta\gamma\delta}(t)$ the exceptional divisors obtained, where $\pi_{w}(t)(E_j(t))=p_j(t)$, $1\le j\le \frac{\nu^4}{\alpha\beta\gamma\delta}$. Each $E_j(t)$ is a weighted projective $3$-space, $\mathbb P^3_{w}$ where $w=(\beta\gamma\delta,\alpha\gamma\delta,\alpha\beta\delta,\alpha\beta\gamma)$. Now use the hypothesis that $1<\alpha<\beta<\gamma<\delta$ in such a way that $\alpha$, $\beta$, $\gamma$ and $\delta$ are pairwise relatively prime. This ensures that $\mathbb P^3_{w}$ is biholomorphic to $\mathbb P^3$ \cite[Lemma 5.7, p.106]{Fletcher}.  A similar construction can be found in details in \cite[Example 3.6, p.957]{mamor}. Denote by $V_{\tau}'(t)$ the strict transform of $V_{\tau}(t)$ by $\pi_{w}(t)$.

\begin{remark}\label{r:37}
{\rm Since the pair $(f_t,\mathcal G_t)$ is generic and $I(f_t)=C(\mathcal F_t)=\{p_1(t),...,p_\frac{\nu^4}{\alpha\beta\gamma\delta}(t)\}$, we can assert the following facts:
\begin{itemize}
\item[(I).] The map $f_t\circ\pi_w(t)$ extends to a holomorphic map $f_t'\colon M_{w}(t)\to\mathbb P^{3}$.

If $\tau$ is a singulirity of $\mathcal G$ such that $\tau\notin(x_0x_1x_2x_3=0)$  then:
\item[(II).] The fiber $f_t'^{-1}(\tau)$, $\tau\in\ \mathbb P ^{3}$ is the strict transform of $f_t^{-1}(\tau)$ by $\pi_w(t)$. It is smooth near $E_j(t)$ and cuts $E_j(t)$ transversely in just one point, for all $1\le j\le \frac{\nu^4}{\alpha\beta\gamma\delta}$.
\item[(III).] $V_{\tau}'(t)$ is a smooth curve and $f_t'$ is a submersion in some neighborhood of $V_{\tau}'(t)$, for all $1\le j\le  \frac{\nu^4}{\alpha\beta\gamma\delta}$.
\end{itemize}
Assertion (I) follows from the fact that the weighted blowing-up solves completely the indeterminacy set of $f_t$ near each $p_j(t)$, $1\le j\le \frac{\nu^4}{\alpha\beta\gamma\delta}$, which is a consequence of the Lemma \ref{casomenor} and Lemma \ref{casomaior}. To extend $f_t'$ on a neighborhood of the singular locus of each one of the $\mathbb P^3_{w}$ we use Hartogs Theorem \cite{fisher}, since these singularities are quotients of actions by finite groups. Of course, (I) $\implies$ (II) $\implies$ (III).} In fact in what follows we will always avoid working over the singular locus of  $M_{w}(t)$ because it is not extremely relevant to our arguments. 
\end{remark}

Let us denote by $\mathcal F_t'$ and $\widetilde{\mathcal F}_t'$ the strict transforms by $\pi_{w}$ of the foliations ${\mathcal F}_t$ and $\widetilde{\mathcal F}_t$, respectively. Note that they are dicritical, that is, each exceptional divisor is not invariant by $\mathcal F_t'$ and $\widetilde{\mathcal F}_t'$.   
Observe that, for each $1\le j\le \frac{\nu^4}{\alpha\beta\gamma\delta}$, the foliation $\widetilde{\mathcal F}_t'|_{E_j(t)}$ is a foliation by curves on $E_j(t)\simeq\mathbb P^{3}$ equivalent to $\mathcal G_t$.
Similarly, ${\mathcal F}_t'|_{E_j(t)}$ is a foliation by curves on $E_j(t)$, but we cannot assert that ${\mathcal F}_t'|_{E_i(t)}$ is equivalent to
${\mathcal F}_t|_{E_j(t)}$ if $i\ne j$. However, by the choice $\mathcal G_t$, we can assert that $\mathcal G_t$ is equivalent to ${\mathcal F}_t'|_{E_1(t)}$.
On the other hand, by using (II) we can define a holomorphic map $\Phi_t\colon M_w(t)\to E_1(t)$, by
\[
\Phi_t(q):=f_t'^{-1}(f_t'(q))\cap E_1(t)\,\,,\,\,\forall\,q\in\,\mathbb P^4\,\,.
\]
Note that the fibers of $\Phi_t$ coincide with the fibers of $f_t'$. In fact, the maps $\Phi_t$ and $f_t'$ are equivalent, in the sense that there exists a biholomorphism $h\colon \mathbb P^{3}\to E_1(t)$ such that $\Phi_t=h\circ f_t'$. In particular, identifying $\mathcal G_t$ with $\mathcal F_t'|_{E_1(t)}$ we can assert that
\[
\widetilde{\mathcal F}'_t=\Phi_t^*(\mathcal G_t)\,\,.
\]

Now, we fix a singularity of $\mathcal G_t$, say $\tau_1(t)$, such that $\tau_1(t)\notin(x_0x_1x_2x_3=0)$ with $V_{\tau_1}'(t)=\Phi_t^{-1}(\tau_1(t))$. Since $\mathcal G_t\in\mathcal M(d,3)$ it has $3$ analytic separatrices through $\tau_1(t)$, all smooth, say $\lambda_1(t),\lambda_2(t)$ and $\lambda_3(t)$, and no other local analytic separatrix.

 Each separatrix $\lambda_j(t)$ is a germ of a complex curve through $\tau(t)$ such that $\lambda_j(t)\setminus\{\tau(t)\}$ is contained in some leaf of $\mathcal G_t$. Moreover, since $\mathcal G_t\in\mathcal M(d,3)$ then its Zariski closure $\overline{\lambda_j(t)}^Z$ is $E_1(t)=\mathbb P^{3}$, because $\mathcal G_t$ has no proper algebraic invariant subset of positive dimension different to the $2$-planes $(x_0x_1x_2x_3=0)$  and also the lines $(x_i=x_j=0)$ for $i\neq j$.

We fix one of these separatrices, say $\lambda_1(t)$. By construction the set $\Phi_t^{-1}(\lambda_1(t))$ satisfies the following property:
\begin{itemize}
\item[1.] It is $\widetilde{\mathcal F}_t'$-invariant. In other words, $V_{\tau_1}'(t)\subset\Phi^{-1}_t({\lambda_1}(t))$ and $\Phi^{-1}_t({\lambda_1}(t))\setminus V_{\tau_1}'(t)$ is an open subset of some leaf of $\widetilde{\mathcal F}_t'$.
\end{itemize}
We can assert also that
\begin{itemize}
\item[2.] If $\mathcal G_t\in\mathcal M(d,3)$ and $\tau_1(t)$ was chosen as previous then the Zariski closure $\overline{\Phi^{-1}_t(\lambda_1(t))}^{\,Z}$ is $\mathbb P^4$. This follows from the relation
\[
\Phi_t^{-1}\left(\overline{\lambda_1(t)}^{\,Z}\right)=\overline{\Phi_t^{-1}(\lambda_1(t))}^{\,Z}\,\,.
\]
\end{itemize}

{\bf Notation.} A  $\ell$-dimensional {{\it strip} } along $V_{\tau_1}'(t)$, $\ell\geq 2$ is a germ of smooth complex manifold of dimension $\ell$ along $V_{\tau_1}'(t)$, containing $V_{\tau_1}'(t)$ and transverse to $E_1(t)$. 
We say that the strip $\Gamma$ along $V_{\tau_1}'(t)$ is a separatrix of $\mathcal F_t'$ (resp. $\widetilde{\mathcal F}_t'$) along $V_{\tau_1}'(t)$ if it is $2$-dimensional and $\Gamma\setminus V_{\tau_1}'(t)$ contained in some leaf of $\mathcal F_t'$ (resp. $\widetilde{\mathcal F}_t'$). If $\Gamma$ is a separatrix of $\mathcal F_t'$ (or of $\widetilde{\mathcal F}_t'$) along $V_{\tau_1}'(t)$ then $\Gamma\cap E_1(t)$ is one of the separatrices $\lambda_j(t), 1\leq j\leq 3$, of $\mathcal G_t$.
We say that the strip is $\Phi_t$-invariant if it is a union of fibers of $\Phi_t$, or equivalently $\Phi_t^{-1}(\Gamma\cap E_1(t))=\Gamma$.

\begin{claim}\label{cl:38}
Let $\Gamma$ be a $2$-dimensional strip along $V_{\tau_1}'(t)$. Assume that there exists a $3$-dimensional strip $\Sigma$ along 
$V_{\tau_1}'(t)$ such that: 
\begin{itemize}
\item[(a).]  $\Sigma$ is $\Phi_t$-invariant. 
\item[(b).] $\Gamma$ and $\Sigma$ are transverse and $\Gamma\cap\Sigma=V_{\tau_1}'(t)$.
\end{itemize} 

Then $\Gamma$ is $\Phi_t$-invariant. In particular, if $\Gamma$ is a separatrix of $\widetilde{\mathcal F}_t'$ then it is also a separatrix of $\widetilde{\mathcal F}_t'$.

\end{claim}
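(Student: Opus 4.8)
\textbf{Proof plan for Claim \ref{cl:38}.} The plan is to transfer invariance from the $3$-dimensional strip $\Sigma$ to the $2$-dimensional strip $\Gamma$ by a transversality-plus-dimension count argument carried out fiberwise along $V_{\tau_1}'(t)$. First I would set up local coordinates: near a point $q\in V_{\tau_1}'(t)\setminus\operatorname{sing}(M_w(t))$, using Remark \ref{r:37}(III), the map $f_t'$ (equivalently $\Phi_t$) is a submersion, so we can choose coordinates $(y_1,y_2,y_3,y_4)$ in which $\Phi_t$ is the projection $(y_1,y_2,y_3,y_4)\mapsto(y_1,y_2,y_3)$ onto a neighborhood of $\tau_1(t)$ in $E_1(t)$, with $V_{\tau_1}'(t)=\{y_1=y_2=y_3=0\}$ the $y_4$-axis and $E_1(t)=\{y_4=0\}$. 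In these coordinates a strip along $V_{\tau_1}'(t)$ is a graph over the $y_4$-axis, and being $\Phi_t$-invariant means being a union of fibers, i.e. of the form $\{g(y_1,y_2,y_3)=0\}$ pulled back, independent of $y_4$; equivalently $\Gamma$ is $\Phi_t$-invariant iff $\Gamma=\Phi_t^{-1}(\Gamma\cap E_1(t))$.

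Next I would use hypothesis (a): $\Sigma$ is $\Phi_t$-invariant, so $\Sigma=\Phi_t^{-1}(\Sigma\cap E_1(t))$, and $\Sigma\cap E_1(t)$ is a $2$-dimensional germ of surface through $\tau_1(t)$ inside $E_1(t)\simeq\mathbb P^3$, invariant (via the identification of $\mathcal F_t'|_{E_1(t)}$ with $\mathcal G_t$) — actually we only need it as a smooth surface germ. The transversality condition (b), $\Gamma\cap\Sigma=V_{\tau_1}'(t)$ with $\Gamma\pitchfork\Sigma$, says that in the coordinates above $\Gamma$ is transverse to the $3$-plane $\Sigma$; since $\Sigma$ is a union of fibers of $\Phi_t$ this forces $\Gamma$ to be transverse, fiber by fiber, to $\Sigma\cap E_1(t)$ inside each relevant slice, and combined with $\dim\Gamma=2$, $\operatorname{codim}(\Gamma\cap E_1(t))\le \dim E_1(t)$, a dimension count shows $\Gamma$ meets each fiber $\Phi_t^{-1}(x)$, $x\in\Gamma\cap E_1(t)$, in a discrete set; I would then argue that because $\Gamma$ is a smooth connected germ containing the single fiber $V_{\tau_1}'(t)=\Phi_t^{-1}(\tau_1(t))$ and is transverse to $E_1(t)$, this discrete intersection is in fact a single point for each $x$ near $\tau_1(t)$. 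Hence the restriction $\Phi_t|_\Gamma\colon\Gamma\to\Gamma\cap E_1(t)$ is a biholomorphism of germs, so $\Gamma$ is swept out by the curves $\Phi_t^{-1}(x)\cap\Gamma$; finally I would upgrade "$\Phi_t^{-1}(x)\cap\Gamma$ is a point" to "$\Phi_t^{-1}(x)\subset\Gamma$" using that $\Gamma$ has dimension $2$ and the fibers of $\Phi_t$ are $1$-dimensional together with the fact that $\Gamma$ already contains one whole fiber: the set of $x$ for which $\Phi_t^{-1}(x)\subset\Gamma$ is open (by the submersion normal form) and closed in $\Gamma\cap E_1(t)$ and nonempty (it contains $\tau_1(t)$), so $\Gamma=\Phi_t^{-1}(\Gamma\cap E_1(t))$ near $V_{\tau_1}'(t)$, i.e. $\Gamma$ is $\Phi_t$-invariant. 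The last sentence of the claim then follows since $\widetilde{\mathcal F}_t'=\Phi_t^*(\mathcal G_t)$: a $\Phi_t$-invariant $2$-strip whose trace on $E_1(t)$ is a separatrix $\lambda_j(t)$ of $\mathcal G_t$ is automatically $\widetilde{\mathcal F}_t'$-invariant, and conversely.

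The main obstacle I anticipate is controlling the intersection $\Gamma\cap\Phi_t^{-1}(x)$ globally enough to conclude it is a \emph{single} point (not just discrete) and, relatedly, handling the orbifold singular locus of $M_w(t)$: a priori $V_{\tau_1}'(t)$ could pass through or near the singularities of the weighted blow-up, where the submersion normal form for $f_t'$ is not literally available. I would circumvent this exactly as in Remark \ref{r:37} — work on $M_w(t)$ minus its singular locus, where Remark \ref{r:37}(III) gives that $f_t'$ is a genuine submersion along $V_{\tau_1}'(t)$, carry out the argument there, and then extend across the (finite-quotient) singularities by Hartogs' theorem, since all the objects involved ($\Gamma$, $\Sigma$, $\Phi_t$) are given by bounded holomorphic data. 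The single-point count itself reduces to the observation that $\Phi_t|_\Gamma$ is a submersion germ between equidimensional manifolds that is injective along $V_{\tau_1}'(t)$ (the fibers $\Phi_t^{-1}(\tau_1(t))\cap\Gamma=V_{\tau_1}'(t)$ is connected), hence a local biholomorphism, hence injective on a neighborhood.
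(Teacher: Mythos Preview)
Your argument contains a dimensional inconsistency that breaks it. You claim that $\Gamma$ meets each fiber $\Phi_t^{-1}(x)$, $x\in\Gamma\cap E_1(t)$, in a discrete set, and that $\Phi_t|_\Gamma\colon\Gamma\to\Gamma\cap E_1(t)$ is a biholomorphism of germs between equidimensional manifolds. But $\dim\Gamma=2$ while $\dim(\Gamma\cap E_1(t))=1$, so no such biholomorphism exists; and for $x=\tau_1(t)$ the intersection $\Gamma\cap\Phi_t^{-1}(x)=V_{\tau_1}'(t)$ is already a full curve, not a point. The conclusion you want is precisely that $\Gamma\cap\Phi_t^{-1}(x)=\Phi_t^{-1}(x)$ is a curve for every nearby $x$, which contradicts the discreteness you argue for. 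The final open--closed step is where the real content hides, and the ``openness'' you invoke is unjustified: in the submersion normal form $(y_1,y_2,y_3,y_4)\mapsto(y_1,y_2,y_3)$, a $2$-strip containing the $y_4$-axis may perfectly well be of the form $\{y_2=f(y_1,y_4),\ y_3=g(y_1,y_4)\}$ with $f(0,\cdot)=g(0,\cdot)=0$ but $f,g$ genuinely depending on $y_4$; then no nearby fiber lies in $\Gamma$, so the set $\{x:\Phi_t^{-1}(x)\subset\Gamma\}$ need not be open. Your argument never actually uses hypothesis~(a) in a substantive way.

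What the paper does is global rather than local, and this is essential. It deforms the $\Phi_t$-invariant hypersurface $\Sigma=\Phi_t^{-1}(\sigma_0)$ into a family $\Sigma_s=\Phi_t^{-1}(\sigma_s)$, where $\sigma_s\subset E_1(t)$ are parallel $2$-discs. Transversality $\Gamma\pitchfork\Sigma$ along the \emph{compact} curve $V_{\tau_1}'(t)$ guarantees that $\Gamma\cap\Sigma_s$ is again a compact curve for small $s$. Then $\Phi_t(\Gamma\cap\Sigma_s)$ is a compact analytic subset of the polydisc $Q\subset E_1(t)$, hence a single point $p(s)$ by the maximum principle; so $\Gamma\cap\Sigma_s\subset\Phi_t^{-1}(p(s))$, and comparing these two irreducible compact curves gives $\Phi_t^{-1}(p(s))\subset\Gamma$. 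The missing idea in your approach is exactly this compactness-plus-maximum-principle mechanism: it is what converts the transversality with a $\Phi_t$-invariant $\Sigma$ into the statement that whole fibers of $\Phi_t$ lie in $\Gamma$, and no purely local coordinate computation near a single point of $V_{\tau_1}'(t)$ can substitute for it.
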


\begin{proof}
Consider a representatives of $\Gamma$ and $\Sigma$ transverse to $E_1(t)$, denoted by the same symbols. Since $\Phi_t$ is a submersion at the points of $V_{\tau_1}'(t)=\Phi_t^{-1}({\tau_1}(t))$ and by (b), up to a translation, there exists a holomorphic coordinate system around ${\tau_1}(t)\in E_1(t)\subset M_w(t)$, say $(x,y)\colon U\to\C^{3}\times\C$, $x=(x_1,x_2,x_{3})$, such that
\begin{itemize}
\item[(i).] $x({\tau_1}(t))=0\in\C^{3}$ and $y({\tau_1}(t))=0\in\C$.
\item[(ii).] $E_1(t)\cap U=(y=0)$ and $V_{\tau_1}'(t)\cap U=(x=0)$.
\item[(iii).] $\Phi_t(x,y)=(x,0)$.
\item[(iv).] $\Gamma\cap E_1(t)\cap U=(x_2=x_3=y=0)$.
\item[(v).] $\Sigma\cap E_1(t)\cap U\subset(x_1=y=0)$.

Fix a polydisc $Q=\{(x,0)|\, |x_j<\epsilon, 1\leq j\leq3\}\subset U\subset E_1(t)$. We can take the representatives $\Sigma$ and $\Gamma$ in such a way that
\item[(vi).] $\Sigma\cup\Gamma\subset\Phi_t^{-1}(Q)$.
\end{itemize}

Let us denote  $p(s):=(s,0,0,0)\in\Gamma\cap E_1(t)\subset Q$. We assert that there exists $0<\delta<\epsilon$ such that if $|s|<\delta$ then $\Phi_t^{-1}(p(s))\subset \Gamma$.  Note that this implies Claim \ref{cl:38} because $\Phi_t^{-1}(p(s))| |s|<\delta\})$ is open subset of $\Gamma$.
 \par Let us prove the assertion. Given $s\in \mathbb C$ with $|s|<\epsilon$ set  $\sigma_s=\{(x,0)\in Q|\,x_1=s\}$ and $\Sigma_s:=\Phi_t^{-1}(\sigma_s)$, so that $\Sigma=\Sigma_0$.
Note that $\sigma_s$ is an hypersurface of $E_1(t)$ transverse to $\Gamma\cap E_1(t)$ at the point $p(s)\in \Gamma\cap E_1(t)$. Since $\Gamma$ intersects $\Sigma$ along $V_{\tau_1}'(t)$ and $V_{\tau_1}'(t)$ is compact, if $\epsilon$ is small, by standard arguments there exists a $C^{\infty}$ isotopy $\psi:\Sigma \times D_{\epsilon}\to M_{w}(t)$ with $\psi(\Sigma\times\{s\})=\Sigma_s$, $D_{\epsilon}=\{z\,|\, |z|<\epsilon\}$. In particular, the compactness of $V_{\tau_1}'(t)$ implies, via transversality theory, that $\Sigma_s$ intersects $\Gamma$ in a compact complex curve, say $\sigma_s$. Finally, $p(s)\in\Phi_t(\sigma_s)\subset Q$ and since 
$\Phi_t(\sigma_s)$ is a compact analytic subset of $Q$ we must have $\Phi_t(\sigma_s)=\{p(s)\}$, so that $\Phi_t^{-1}(p(s))=\sigma_s\subset \Gamma$, as asserted. 
\end{proof}

Now, the idea is to prove that there are two strips $\Gamma$ and $\Sigma$ as in Claim \ref{cl:38} 
such that  $\Gamma$ is a separatrix of $\mathcal F_t^{'}$ along $V_{\tau_1}'(t)$. 

By Claim \ref{cl:38}, $\Gamma$ will be also separatrix of  $\widetilde{\mathcal F}_t'$ and the two foliations will have a common leaf. This will conclude the proof of the Lemma \ref{l:36} and of Theorem \ref{teob}.  

In the construction of $\Gamma$ and $\Sigma$ as above, we will work with the deformation $t\in(\mathbb C,0)\to M_{w}(t)$. We can assume that there exists $\epsilon>0$ such that $(I), (II)$ and $(III)$ of Remark \ref{r:37} are true if $|t|<\epsilon$. Consider the complex orbifold $\widehat M_{w}(t)=\{(z,t)\,|\,|t|<\epsilon$ and $z\in M_{w}(t)$\}. Note that $\{(z,t)\in\widehat M_{w}(t)|t=t_0\}=M_{w}(t_0)\times\{t_0\}$, so that it will be denoted by $M_{w}(t_0)$. In $\widehat M_{w}$ consider the following objects:

\begin{itemize}
\item[(A).] The $2$-dimensional holomorphic foliation $\hat{\mathcal F^{'}}$ such that $\hat{\mathcal F_t^{'}}|_{M_{w}(t)}=\mathcal F_t^{'}$. Note that, by construction, the projection $(z,t)\in\widehat M_{w}\to t\in\mathbb C$ is a first integral of $\hat{\mathcal F^{'}}$.

\item[(B).] The $4$-dimensional subset $\hat E_1$ of $\widehat M_{w}$ defined by $\hat E_1\cap \widehat M_{w}(t)=E_1(t)$.
\item[(C).] The $1$-dimensional foliation $\hat{\mathcal G}$ on $\hat E_1$ defined by $\hat{\mathcal G} |_{E_1(t)}=\mathcal G_t$. Note that the projection $(z,t)\in\hat E_1\to t\in \mathbb C$ is a first integral of $\hat{\mathcal G}$.

\item[(D).] The map $\hat{\Phi}: \widehat M_{w}\to \hat E_1$ defined by $\hat{\Phi}(z,t)={\Phi}_t(z)\in E_1(t) \subset\hat E_1$.
\item[(E).] The $2$-dimensional  submanifold $\hat V_{\tau_1}'$ defined by $\hat V_{\tau_1}'\cap\widehat M_{w}(t)=V_{\tau_1}'(t)$.
\end{itemize}

The idea is to construct two germs of complex sumanifolds along $\hat V_{\tau_1}'$, say $\hat\Gamma$  and $\hat\Sigma$, such that:

\begin{itemize}
\item[(a).]  $\hat\Sigma$ is $\hat{\Phi}$-invariant.
\item[(b).] $\hat\Gamma$  and $\hat\Sigma$ are transverse and $\hat\Sigma\cap\hat\Gamma=\hat V_{\tau_1}'$.
\item[(c).] $\hat\Gamma\cap M_{w}(t)$ is a separatrix of $\mathcal F_t^{'}$ along $V_{\tau_1}'(t)$, if $|t|<\delta$, where $0<\delta\leq\epsilon$. In particular, $dim_{\mathbb C}(\hat\Gamma)=3$ and $dim_{\mathbb C}(\hat\Sigma)=4$.   
\end{itemize}

If $\hat\Gamma$ and $\hat\Sigma$ are as in (a), (b) and (c) then $\Gamma_t:=\hat\Gamma\cap M_{w}(t)$ and 
$\Sigma_t:=\hat\Sigma\cap M_{w}(t)$ satisfy the hypothesis of Claim \ref{cl:38}, if $|t|<\delta$, as the reader can check.
In the construction of $\hat\Sigma$ we will use that $\hat V_{\tau_1}'$ is contained in the Kupka set of $\hat{\mathcal F}^{'}$, which will be denoted by $K(\hat{\mathcal F}^{'})$. Indeed, $V_{\tau_1}'(t)\backslash\bigcup_{j} E_j(t)\subset K(\hat{\mathcal F}^{'})$ by Lemma \ref{subvariedades}, because ${\mathcal F}^{'}_t=\pi_{w}^\ast{\mathcal F_t}$. Moreover, for each $j\,=1,...,\frac{\nu^4}{\alpha\beta\gamma\delta}$, $V_{\tau_1}'(t)\cap E_j(t)$ consists of one point, which is also in the Kupka set because $p_{j}(t)$ is quasi-homogeneous singularity of $\mathcal F_t$. We leave the details to the reader. Given $w_0=(z_0,t_0)\in\hat V_{\tau_1}'$, if $\omega$ is a $2$-form  representing ${\mathcal F}^{'}_t$ in a neighborhood of $z_0$ then, by construction, the form $\hat\omega:=\omega\wedge dt$ represents $\hat{\mathcal F}^{'}$ in a neighborhood of $w_0$. Finally,  $d\hat\omega:=d\omega\wedge dt\neq0$, because $z_0$ is in the Kupka set of ${\mathcal F}^{'}_{t_0}$, which proves the assertion. 

Next we will find the normal type of $\hat{\mathcal F}^{'}$ at the point $(\tau_1(0),0)\in V_{\tau_1}'(0)\cap E_1(0)\subset \hat{E}_1$. Note that, by construction, this normal type coincides with the germ $\hat{\mathcal G}_0$ of $\mathcal G$ at $(\tau_1(0),0)$  (see (A), (B) and (C)). Since $(z,t)\to t$ is a first integral of $\hat{\mathcal G}$ and $\hat{\mathcal G}|_{E_{1}(t)}=\mathcal G_t$ , the normal type is done essentially by a holomorphic one-parameter family of vector fields $(Y_t)_{t\in(\mathbb C,0)}$, where $Y_t$ is a germ of vector field at $\tau_1(t)$ representing $\mathcal G_t$ at $\tau_1(t)$. For each fixed $t\in D_{\epsilon}$ the vector field $Y_t$ has $3$ one dimensional separatrices. In what follows from the theory of invariant manifolds of hyperbolic vector fields (see \cite{hps}) that it is possible to find a germ of holomorphic function $\lambda:(\mathbb C^2,0)\to\hat{E_1}$ such that 
\begin{itemize}
\item[(i).]   $\lambda(0,t)=\tau_1(t)$, $\forall t\in (\mathbb C,0)$.
\item[(ii).] $s\to\lambda(s,t)$, is a holomorphic parametrization of the separatrix $\lambda_1(t)$ of $\mathcal G_t$
\end{itemize}

Note that $\{\lambda(s,t)|s\in (\mathbb C,0), t=t_0\subset E_{1}(t_0)$, $\forall t_0$. Moreover, $\hat\lambda:=\lambda(\mathbb C^2,0)$ is a germ at $\tau_1(0)$ of a smooth surface in $\hat{E_1}$  and $\hat\lambda\cap E_1(t)$ is a separatrix of $\mathcal G_t$. for all $t\in (\mathbb C,0)$. Let us finish the construction of $\hat\Gamma$ using the property of local product along the Kupka set.  Since $V_{\tau_1}'$ is contained in the Kupka set of $\hat{\mathcal F}^{'}$, there exist a finite covering $(U_{\alpha})_{\alpha\in A}$ of $V_{\tau_1}'(0)$ $\hat{V}_{\tau_1}'$ by open sets of $\widehat M_{w}$ and a family of submersions $(\varphi_{\alpha})_{\alpha\in A}$, where $\varphi_{\alpha}:U_{\alpha}\to W$ ($W$ a neighborhood of $(\tau_1(0),0) \in \hat E_1)$ such that

\begin{equation}\label{eq:39}
\hat{\mathcal F}^{'}|_{U_{\alpha}}\,=\varphi_{\alpha}^{\ast}\,(\hat{\mathcal G}|_W)\,\,,\,\, \forall \alpha\in A.
\end{equation}

Fix $0<\delta\leq\epsilon$ such that $\lambda$ has representative defined in $D_{\delta}\times D_\delta$, denoted by the same symbol, such that $\hat\lambda_\delta:=\lambda(D_{\delta}\times D_\delta)\subset W$.

Let $\Gamma_\alpha$ be the germ of $\varphi_\alpha^{-1}(\hat\lambda_\delta)$ along $V_{\tau_1}'(0)\cap U_\alpha$. Note that for each $\alpha \in A$, $\Gamma_\alpha$ is a germ of smooth 3-dimensional manifold. 
Moreover, $\Gamma_\alpha$ is $\hat{\mathcal F}^{'}$-invariant, by \ref{eq:39}, so that $\Gamma_\alpha\cap M_{w}(t)$ is ${\mathcal F}^{'}(t)$-invariant, $\forall \in A$. Using again \ref{eq:39}, if $U_\alpha\cap U_\beta\neq\emptyset$ then $\Gamma_\alpha\cap U_\alpha\cap U_\beta=\Gamma_\beta\cap U_\alpha\cap U_\beta$. If we set $$\hat\Gamma=\bigcup_{\alpha}\Gamma_\alpha$$ then $\hat\Gamma$ satisfies (c): $\Gamma\cap M_{w}(t)$ is a separatrix of ${\mathcal F}^{'}(t)$ along $V_{\tau_1}'(t)$.

The construction of $\hat\Sigma$ as above is simpler. First of all, we fix any germ at $\tau_{1}(0)$ of smooth complex submanifold of $\hat E_1$, say $\hat C$, with the property that it is transverse to $\hat\lambda_\delta$ and $\hat C\cap\hat\lambda_\delta=\tau_1(0)$. Note that $dim_{\mathbb C}(\hat C)=3$. Set $\Sigma=\hat\Phi(\hat C)$. Now, use that for $t=0$ we have ${\mathcal F}^{'}_0=\tilde{\mathcal F}^{'}_0=\Phi_0^\ast(\mathcal G_0)$. This implies that $\hat\Sigma\cap M_w(0)$ is transverse to $\hat\Gamma\cap M_w(0)$ along $\hat{V}_{\tau_1}'(0)\cap M_w(0)$ along ${V}_{\tau_1}'(0)$. Therefore, by transversality theory, $\hat\Sigma\cap M_w(t)$ are transverse to and $\hat\Gamma\cap\hat\Sigma\cap M_w(t)={V}_{\tau_1}'(t)$, as wished. 

This finishes proof of Theorem \ref{teob} in the case (1).

The other cases are analogous.

\section{Remarks and complements}

In \cite{soares1}  the author shows  that  for a fixed degree the union of the
logarithmic components of the space of codimension one foliations of a fixed degree $\mathbb P^{n}$; $n\geq3$ is connected. 

Theorem \ref{teob} motivates the following:

\begin{prob}\label{pr:1}
{\rm Is it true that for a fixed degree, the union of the branched pull-back components is a connected subset?}
\end{prob}

\bibliographystyle{amsalpha}

\end{document}